	\renewcommand{\longleftrightarrow}{\ce{<=>}}
\newcommand{\intclr}{M}
\newcommand{\cane}{{\mathbf e}}
\newcommand{\pry}{{\widehat y}}
\newcommand{\rone}{r1}
\newcommand{\rtwo}{r2}
\newcommand{\Nrone}{r3}
\DeclareRobustCommand{\comment}[1]{}
\newcommand{\mytriangle}{\hfill $\triangle$}
\newcommand{\mydiamond}{\hfill $\Diamond$}
\DeclareMathOperator{\supp}{supp}
\DeclareRobustCommand{\mybox}{\hfill $\Box$}
\DeclareRobustCommand {\mydiamond}{\hfill $\Diamond$}
\DeclareRobustCommand{\mytriangle}{\hfill $\triangle$} 
\DeclareRobustCommand{\calA}{{\cal A}}
\DeclareRobustCommand{\calB}{{\cal B}}
\DeclareRobustCommand{\calC}{{\cal C}}
\DeclareRobustCommand{\calD}{{\cal D}}
\DeclareRobustCommand{\calE}{{\cal E}}
\DeclareRobustCommand{\calJ}{{\cal J}}
\DeclareRobustCommand{\calO}{{\cal O}}
\DeclareRobustCommand{\calP}{{\cal P}}
\DeclareRobustCommand{\calR}{{\cal R}}
\DeclareRobustCommand{\calS}{{\cal S}}
\DeclareRobustCommand{\calX}{{\cal X}}
\DeclareRobustCommand{\calY}{{\cal Y}}
\DeclareRobustCommand{\n}{{\mathbb N}}
\DeclareRobustCommand{\r}{{\mathbb R}}
    \DeclareRobustCommand{\rplus}{\r_{\geqslant 0}}
    \DeclareRobustCommand{\rpluss}{\r_{> 0}}
\DeclareRobustCommand{\z}{{\mathbb Z}}
    \DeclareRobustCommand{\zplus}{\z_{\geqslant 0}}
    \DeclareRobustCommand{\zpluss}{\z_{> 0}}
\theoremstyle{definition}
\newtheorem{definition}{Definition}
\theoremstyle{remark}
\newtheorem{example}[definition]{Example}
\newtheorem{remark}[definition]{Remark}
\theoremstyle{plain}
\newtheorem{lemma}[definition]{Lemma}
\newtheorem{proposition}[definition]{Proposition}
\newtheorem{theorem}[definition]{Theorem}
\newtheorem{corollary}[definition]{Corollary}
\title{Intermediates, Catalysts, Persistence, and Boundary Steady States}
\author{Michael Marcondes de Freitas, Elisenda Feliu, and Carsten Wiuf \\ {\small Department of Mathematical Sciences, University of Copenhagen}}
\date{\today}
\begin{document}

\maketitle

\abstract{For dynamical systems arising from chemical reaction networks, persistence is the property that each species concentration remains positively bounded away from zero, as long as species concentrations were all positive in the beginning. We describe two graphical procedures for simplifying reaction networks without breaking known necessary or sufficient conditions for persistence, by iteratively removing so-called intermediates and catalysts from the network. The procedures are easy to apply and, in many cases, lead to highly simplified network structures, such as monomolecular networks. For specific classes of reaction networks, we show that these conditions for persistence are equivalent to one another. Furthermore, they can also be characterized by easily checkable strong connectivity properties of a related graph. In particular, this is the case for (conservative) monomolecular networks, as well as cascades of a large class of post-translational modification systems (of which the MAPK cascade and the $n$-site futile cycle are prominent examples). Since one of the aforementioned sufficient conditions for persistence precludes the existence of boundary steady states, our method also provides a graphical tool to check for that.}

\vskip 1ex

\noindent {\bf Keywords:} Reaction Network Theory $\cdot$ Model Reduction $\cdot$ Persistence $\cdot$ Boundary Steady States $\cdot$ Intermediates $\cdot$ Catalysts $\cdot$ Post-Translational Modification

\vskip 1ex

\noindent {\bf MSC Codes:} 34C99 \ 80A30 \ 92C42

\tableofcontents

\section{Introduction}

Since the seminal works of Horn, Jackson and Feinberg in the 70's (\cite{feinberg-1979,gunawardena-2003,horn--jackson-1972}, and references therein), chemical reaction network theory (CRNT) has provided a fruitful framework to study the dynamical systems describing how the concentrations of the involved chemical species evolve over time. Of great interest has been the long-term behavior of these systems, for example, whether they may exhibit oscillatory behavior \cite{feinberg-1987}, local asymptotic stability {\cite{anderson-2008,anderson-2011,feinberg-1987,gopalkrishnan--miller--shiu-2014,sontag-2001}}, or persistence \cite{angeli--deleenheer--sontag-2007c,angeli--deleenheer--sontag-2011,craciun--nazarov--pantea-2013,deshpande--gopalkrishnan-2014,gnacadja-2011i,gnacadja-2011,gopalkrishnan--miller--shiu-2014}.

The mathematical concept of persistence models the property that every spe\-cies concentration remains above a certain threshold, as long as there were positive amounts of each species in the beginning. Besides its intrinsic relevance to the applied sciences, most notably in population biology \cite{smith--thieme-2011}, the concept of persistence has also drawn attention in the context of CRNT on account of its connection with the global attractor conjecture \cite{gopalkrishnan--miller--shiu-2014}.

It can be difficult to determine if the solutions to a system of ordinary differential equations are persistent case by case. A recent contribution was given by Angeli, De Leenheer and Sontag \cite{angeli--deleenheer--sontag-2007c}, who provided two checkable conditions, one sufficient, and the other one necessary, for the persistence of conservative reaction networks. Their sufficient conditions were further developed and relaxed by Deshpande and Gopalkrishnan in \cite{deshpande--gopalkrishnan-2014}. These criteria work under fairly general assumptions on the reaction kinetics. But perhaps unsurprisingly, reaction networks become more difficult to analyze the larger they are, often times exponentially so \cite{cordone--ferrarini--piroddi-2005}. Thus, criteria for persistence in terms of a simplified ``skeleton'' of the given network are desirable. More importantly, simplified versions retaining the properties of interest of the original network may also give insight into the underlying biological mechanism, suggesting what might be the leading causes of the presence (or absence) of said properties. For example, for the class of post-translational modification (PTM) systems of Thomson and Gunawardena \cite{thomson--gunawardena-2009}, {or cascades of PTM systems}, persistence can be characterized in terms of strong connectedness of the underlying substrate network {at each layer} {of the cascade}, as we shall see.

That is the motivation for our model simplification approach to study persistence. In this work we describe a process through which one may simplify a reaction network by iteratively removing ``intermediates'' \cite{feliu--wiuf-2013}, and/or ``catalysts.'' Intuitively speaking, an intermediate is a transient species appearing in the middle of a chain of reactions. Catalysts, on the other hand, are reactants which remain unchanged in every reaction, except possibly for interactions exclusively with other catalysts. Our main contribution is to show that the removal of intermediates and/or catalysts does not break the conditions for persistence given in \cite{angeli--deleenheer--sontag-2007c} {and \cite{deshpande--gopalkrishnan-2014}}. Our main results in this work may be informally stated as follows.

\begin{theorem}\label{thm:informal_main}
The conditions for persistence of reaction networks in \cite{angeli--deleenheer--sontag-2007c} and \cite{deshpande--gopalkrishnan-2014} are invariant under the removal of intermediate species.
\end{theorem}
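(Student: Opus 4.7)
The plan is to reduce the proof to the case of removing a single intermediate species $Y$, since iterating that step handles the removal of arbitrarily many intermediates. I would then focus on the two pieces of combinatorial data that encode the persistence criteria of \cite{angeli--deleenheer--sontag-2007c} and \cite{deshpande--gopalkrishnan-2014}: siphons (sometimes called semilocking sets) and nonnegative conservation laws (P-semiflows), together with the drainability and self-replicability witnesses that refine them in \cite{deshpande--gopalkrishnan-2014}. The goal is to establish a correspondence between these objects in the original network $N$ and in the reduced network $N'$ that is natural enough for each criterion to be equivalently formulated on either side.

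The first main step is a \emph{siphon correspondence}. Each reaction of $N'$ arises by splicing an input reaction $C \to Y$ with an output reaction $Y \to C'$ in $N$, yielding a composite $C \to C'$, while reactions not touching $Y$ are inherited unchanged. I would show that a subset $Z' \subseteq \mathrm{species}(N')$ is a siphon of $N'$ if and only if a suitable lift ($Z'$ itself, or $Z' \cup \{Y\}$, depending on whether some input complex to $Y$ meets $Z'$) is a siphon of $N$. The second main step is a \emph{P-semiflow correspondence}: restriction from $N$ to $N'$ and the natural extension that assigns $Y$ the common weight of its input complexes provide inverse maps on nonnegative conservation laws, preserving supports up to the inclusion of $Y$. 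With these two correspondences in hand, the sufficient and necessary conditions of \cite{angeli--deleenheer--sontag-2007c}---which are purely statements about siphons containing supports of P-semiflows---transfer between $N$ and $N'$. For the refined criterion of \cite{deshpande--gopalkrishnan-2014}, I would additionally verify that drainable and self-replicable witnesses lift and descend: since each composite reaction in $N'$ is the sum of an input--output pair in $N$ whose contributions on $Y$ cancel, any nonnegative rational combination of $N'$-reactions is realized by a corresponding combination in $N$, and conversely after accounting for the transient role of $Y$.

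The main obstacle I expect is the siphon correspondence when $Y$ has multiple input and/or output complexes. A composite reaction $C \to C'$ in $N'$ may enforce a siphon condition that is not implied by either $C \to Y$ or $Y \to C'$ alone, so care is needed to pin down exactly which subsets should be augmented with $Y$ when lifted, and to argue that the lift is well-defined regardless of which input/output complex of $Y$ intersects $Z'$. A secondary subtlety is ensuring that the extension of a P-semiflow of $N'$ to $N$ is consistent across all input complexes of $Y$, which will follow from the defining linear relations at $Y$ but has to be checked. Once this bookkeeping is correctly set up, the remaining arguments are essentially transport along the correspondence and do not require further new ideas.
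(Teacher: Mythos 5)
Your overall strategy coincides with the paper's: reduce to the removal of a single intermediate (Lemma \ref{lemm:intermediateinduction}), set up a siphon correspondence in which a siphon $\Sigma^*$ of the reduced network lifts to $\Sigma^*$ or $\Sigma^*\cup\{Y\}$ (Lemmas \ref{lemm:minsiphons1} and \ref{lemm:minsiphons2}), describe the conservation laws of $G$ by extending those of $G^*$ with the common weight of the non-intermediate complexes in the connected component of $Y$ (Lemmas \ref{lemm:a} and \ref{lemm:intermediatecls1og2}), and transport drainability/self-replicability witnesses by summing the reaction vectors of the input--output pair that a composite reaction splices together (proof of Theorem \ref{thm:main}({\em i}\,)). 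The ``secondary subtlety'' you flag about consistency of the extension across input complexes is exactly Lemma \ref{lemm:a}.

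There is, however, a genuine gap. The necessary condition for persistence in \cite{angeli--deleenheer--sontag-2007c} is \emph{consistency}, i.e.\ the existence of a strictly positive $T$-semiflow ($Nv=0$ with $v\gg 0$); it is not ``purely a statement about siphons containing supports of P-semiflows,'' and your two correspondences do not touch it. Showing that consistency is preserved in both directions is a separate argument (Lemma \ref{lemm:consistencypreserved}): from a positive $v^*$ with $N^*v^*=0$ one must distribute the weight of each composite reaction over its two factors $y\rightarrow Y$ and $Y\rightarrow y'$, and separately assign positive weight to pairs $y\rightarrow Y$, $Y\rightarrow y$ that collapse to a self-loop and hence produce \emph{no} reaction of $G^*$; conversely one needs the bilinear reweighting $w_{y\rightarrow Y}w_{Y\rightarrow y'}/V$. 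Relatedly, conservativity of $G$ itself (a standing hypothesis of the results in \cite{angeli--deleenheer--sontag-2007c}) must be shown to transfer, which in the paper falls out of the explicit description of $\Gamma^\perp$. Two smaller points you should also anticipate: (a) species can disappear under the reduction even when a single intermediate is removed ($\calS^*\neq\calS\backslash\calY$ in general), so a siphon of $G$ may meet neither $\calS^*$ nor need a lift of the form you describe --- the paper shows such siphons are automatically noncritical (Lemma \ref{lemm:drainable_Y_aux}); and (b) when the lifted siphon is $\Sigma^*\cup\{Y\}$ the drainability witness must also drain $Y$, which forces an asymmetric count of in- and out-reactions and the large multiplier $T$ in the paper's proof, and the converse direction requires a case split on whether more reactions enter $Y$ than leave it. These are repairable, but they are not mere transport along your correspondences.
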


\begin{theorem}\label{thm:informal_main2}
The conditions for persistence of reaction networks in \cite{angeli--deleenheer--sontag-2007c} and \cite{deshpande--gopalkrishnan-2014} are invariant under the removal of catalysts.	
\end{theorem}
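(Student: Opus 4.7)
The plan is to reduce everything to a careful analysis of how siphons and P-semiflows transform under catalyst removal, since the conditions of both \cite{angeli--deleenheer--sontag-2007c} and \cite{deshpande--gopalkrishnan-2014} are phrased in terms of these two objects. First I would fix notation for a single catalyst removal: if $C$ is a catalyst of a network $N$, then by definition every reaction of $N$ is either a ``mixed'' reaction of the form $y + kC \to y' + kC$ with $C \notin \supp y \cup \supp y'$, or a reaction whose reactant and product complexes involve only catalyst species. Let $N'$ be the network obtained by deleting $C$ from the reactant and product of each reaction (and discarding any trivial null reactions that result). The first lemma to prove is a clean correspondence: a species set $Z$ with $C \notin Z$ is a siphon of $N$ if and only if it is a siphon of $N'$, and a non-negative vector $w$ with $w_C = 0$ is a P-semiflow of $N$ if and only if its restriction to the non-$C$ coordinates is a P-semiflow of $N'$.

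Once these correspondences are in place, translating the persistence conditions becomes largely bookkeeping. The condition of \cite{angeli--deleenheer--sontag-2007c} asks that every siphon contain the support of a P-semiflow; because $C$ enters and leaves each mixed reaction with equal stoichiometry, the singleton $\{C\}$ already supports a trivial P-semiflow in $N$, so every siphon of $N$ containing $C$ satisfies the condition automatically, while the remaining siphons are in bijection with those of $N'$ by the step above. For the condition of \cite{deshpande--gopalkrishnan-2014}, which is phrased in terms of critical siphons and drainable or self-replicable sets, I would check that each of these concepts is preserved under the siphon correspondence by directly comparing the stoichiometric matrix of $N'$ with that of $N$ restricted to the non-$C$ row and the non-null columns.

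The main obstacle will be handling the pure-catalyst reactions, which are not deleted under the removal of $C$ but whose $C$-coordinate may be non-zero. These can in principle create P-semiflows or siphons involving $C$ that do not fit the clean trivial picture used above. To circumvent this I would argue inductively on the number of catalysts in a valid removal order: at the moment $C$ is removed, the pure-catalyst reactions in which $C$ participates involve only species that are still catalysts, so their stoichiometric effect is confined to a ``catalyst subnetwork'' that is orthogonal to the non-catalyst siphon and P-semiflow structure being tracked. A small separate lemma, showing that the property of being a catalyst is stable under the removal of another catalyst, then guarantees that the iterative procedure is well-defined and that the final simplified network is, up to the established correspondences, independent of the removal order.
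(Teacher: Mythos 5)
Your overall strategy---recasting both sets of conditions in terms of siphons and P-semiflows and tracking how these transform under the removal---is the right one, and your siphon correspondence for species sets disjoint from the removed catalysts does match what the paper proves in Lemma \ref{lemm:catalystsiphons_all}. The genuine gap is the reduction to single-catalyst removals and the induction ``on the number of catalysts in a valid removal order'': an individual member of a set of catalysts need not itself be a (singleton) set of catalysts, so the one-at-a-time procedure is not well-defined. Condition (C1) constrains the \emph{whole} catalyst part of each reaction, and the definition explicitly allows pure-catalyst reactions such as $E_1 + E_2 \longleftrightarrow 2E_3$ (see the discussion after Definition \ref{def:catalysts}). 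For instance, take $G$ with reactions $A + E_1 + E_2 \rightarrow B + E_1 + E_2$ and $E_1 + E_2 \longleftrightarrow 2E_3$: the set $\calE = \{E_1, E_2, E_3\}$ satisfies (C1)--(C2), but no nonempty proper subset of $\calE$ does---e.g.\ for $\{E_1\}$ the reaction $E_1 + E_2 \rightarrow 2E_3$ has a changing $E_1$-part and a nonzero non-$E_1$ part, violating (C1). So there is no valid order of single removals, and the ``small separate lemma'' you invoke cannot repair this, since the obstruction is present before any removal takes place. This is exactly why the paper treats catalysts differently from intermediates: intermediates are peeled off one at a time (Lemma \ref{lemm:intermediateinduction}), whereas the entire set $\calE$ is removed in one step, with the stoichiometric matrix written in the block form \eqref{eq:catalystsstoichiometry} so that $\Gamma^\perp = (\Gamma^*)^\perp \times \Gamma_\calE^\perp \times \r^{q_u}$ (Lemma \ref{lemm:catalystcls}).

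A second, related gap: your claim that every siphon containing a catalyst $C$ is handled ``automatically'' by the trivial P-semiflow supported on $\{C\}$ is only correct for catalysts that take part in no pure-catalyst reaction (the set $\calE \backslash \calS_\calE$ in the paper's notation). For a catalyst appearing in $G_\calE$, the corresponding row of $N$ has nonzero entries in the columns of $\calR_\calE$, so the indicator vector of $\{C\}$ is not a P-semiflow; this is where hypothesis (C2) and the trichotomy of minimal siphons in Lemma \ref{lemm:catalystsiphons_all} are genuinely needed (a minimal siphon meeting $\calS_\calE$ is a minimal siphon of $G_\calE$, and (C2) supplies the P-semiflow or rules out drainability and self-replicability). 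Finally, the conditions of \cite{angeli--deleenheer--sontag-2007c} also include consistency as a necessary condition for persistence of conservative networks; your sketch never addresses T-semiflows or conservativity, which are items ({\em iii}\,) and ({\em iv}\,) of Theorem \ref{thm:main2} and require the same block decomposition together with the conservativity of $G_\calE$.
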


\begin{theorem}\label{thm:informal_main3}
The same minimally simplified reaction network is always obtained by iteratively removing intermediates and catalysts until none can be found, independently of the order in which they are removed.
\end{theorem}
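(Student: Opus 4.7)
The plan is to establish a Church--Rosser type property via Newman's Lemma. Let $\to$ denote the one-step simplification on reaction networks: $N \to N'$ if $N'$ is obtained from $N$ by removing either a single intermediate or a single catalyst. A \emph{minimally simplified} network is precisely a normal form, that is, a network admitting no outgoing $\to$-step. The goal is to show that every reduction sequence starting from a given $N$ terminates at the same normal form.

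The first step is termination. Each reduction strictly decreases the (finite, nonnegative) number of species, so every chain $N \to N_1 \to N_2 \to \cdots$ is finite. By Newman's Lemma it therefore suffices to establish \emph{local confluence}: whenever $N \to N_1$ (by removing species $X$) and $N \to N_2$ (by removing species $Y$), there exists $N'$ with $N_1 \to^{*} N'$ and $N_2 \to^{*} N'$. If $X = Y$ then $N_1 = N_2$ and there is nothing to prove, so assume $X \neq Y$. The natural candidate is the network $N'$ obtained by removing both species. Establishing local confluence then splits into verifying two claims: (a) the two one-step operations commute as edits on the network data, i.e.\ $(N \setminus X) \setminus Y = (N \setminus Y) \setminus X$; and (b) the removal of $X$ is still admissible in $N \setminus Y$ (and symmetrically for $Y$ in $N \setminus X$), so that the one-step reductions $N_1 \to N'$ and $N_2 \to N'$ actually exist.

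Claim (a) is the easier half: both kinds of removal are local edits on reactions touching the removed species. Intermediate removal rewires chains passing through the intermediate, while catalyst removal deletes the catalyst as a spectator from the reactions in which it appears on both sides. Since $X \neq Y$, these two edits act on disjoint pieces of reaction data and commute set-theoretically, so both orders produce the same resulting network. Claim (b) requires a case analysis on the types (intermediate vs.\ catalyst) of $X$ and of $Y$. In each case one must check, against the formal definitions given earlier in the paper, that the defining condition for the removability of $X$ is preserved once $Y$ has been removed.

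The main obstacle is the mixed case in which $X$ is an intermediate and $Y$ is a catalyst (or vice versa), because rewiring through an intermediate creates composite reactions whose catalyst content must be re-examined, and conversely stripping a catalyst from reactions involving an intermediate could in principle alter the production/consumption pattern of that intermediate. The key observation to push this through is that the two notions are orthogonal and local: intermediacy is a condition on the complexes in which $X$ appears and on the species producing/consuming it, whereas the catalytic status of $Y$ is a condition on $Y$ appearing identically on both sides of each reaction in which it participates. Neither removal introduces the other species into a new reaction nor perturbs the stoichiometric pattern relevant to the other condition, so removability is preserved. Once local confluence is in hand, Newman's Lemma yields a unique normal form reachable from any $N$, which is exactly the claimed order-independence.
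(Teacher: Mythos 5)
Your strategy coincides with the paper's: the paper obtains termination implicitly by inducting on the number of species and obtains local confluence from the commutation lemmas (Lemmas \ref{lemm:primitive1and2of3} and \ref{lemm:primitive3of3}), which is exactly Newman's Lemma specialized to this setting, so the skeleton of your argument is the right one. Within that skeleton, however, your claim (a) is not justified by the reason you give: when $X$ and $Y$ are both intermediates and are adjacent in a chain $\cdots \rightarrow X \rightarrow Y \rightarrow \cdots$, the reaction $X \rightarrow Y$ is consumed by \emph{both} removals, so the two edits do not act on disjoint pieces of reaction data. The identity $(N\setminus X)\setminus Y=(N\setminus Y)\setminus X$ does hold when both sides are defined, but the proof is the path-concatenation bookkeeping of Lemma \ref{lemm:intermediateinduction}, not disjointness; and whether both sides are defined is precisely your claim (b), so the two claims are not independent.

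The genuine gap is claim (b), which you correctly isolate as the main obstacle but then settle by an appeal to ``orthogonality and locality.'' That heuristic is refutable. Consider $G$ with reactions $X \rightarrow Y$, $Y \rightarrow X$, $X \rightarrow A$, $Y \rightarrow B$. Then $\{X\}$ is a set of intermediates (property (I2) is witnessed by $Y \rightarrow X$ and $X \rightarrow A$), and so is $\{Y\}$, but removing $\{X\}$ yields the network with reactions $Y \rightarrow A$ and $Y \rightarrow B$, in which $Y$ has no incoming reaction and hence is no longer an intermediate, nor a catalyst since (C1) fails for every candidate set; one checks this network admits no removable set at all. Symmetrically, removing $\{Y\}$ yields the network with reactions $X \rightarrow A$ and $X \rightarrow B$, which has a different species set. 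So removability of one species is not preserved by removal of the other, and the two one-step reducts do not rejoin: local confluence fails for single-species removals of this kind. Note that $\{X,Y\}$ is not itself a set of intermediates here, since (I2) fails for the pair, so one cannot escape by passing to the union. A complete proof must therefore either exclude or explicitly handle such mutually sustaining intermediates, or restrict the rewriting relation to removals of suitable admissible sets; the paper itself asserts the analogue of your claim (b) only in passing (``note that $\calB \setminus \calA$ is \ldots\ also a set of intermediates \ldots\ of $G^*_\calA$''), so this is exactly the point at which the details must be supplied rather than waved at.
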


As shown by various examples throughout this work taken from the systems biology literature, reaction networks naturally exhibit many intermediate complexes and catalysts. So, their removal will often reduce dramatically the size of the network, facilitating its inspection for persistence. To illustrate this, consider a simple one-site phosphorylation process, which can be modeled by the reaction network
\begin{equation}\label{eq:prototype}
	\begin{aligned}
		E + S_0 \longleftrightarrow ES_0 \longrightarrow E + S_1\qquad 
		F + S_1 \longleftrightarrow FS_1 \longrightarrow F + S_0,
	\end{aligned}
\end{equation}
where $S_0, S_1$ represent, respectively, the dephosphorylated and phosphorylated forms of a substrate, $E$ acts as a kinase, $F$ acts as a phosphatase, and $ES_0$ and $FS_1$ are intermediate {protein complexes} in the phosphorylation/dephosphorylation mechanism. Using our results, one may show that necessary or sufficient conditions for persistence for (\ref{eq:prototype}) are a consequence of the same necessary or sufficient conditions for its much simpler underlying substrate model,
\begin{equation}\label{eq:substrate}
	S_0 \longleftrightarrow S_1\,.
\end{equation}
For monomolecular models such as (\ref{eq:substrate}), the necessary or sufficient conditions for persistence are actually equivalent, and, furthermore, characterized by the strong connectedness of each connected component. In fact, (\ref{eq:prototype}) will turn out to be a special case of PTM system. 

We emphasize that iteratively removing intermediates and catalysts---and, if eventually obtaining a monomolecular network, then checking it for strong connectedness of its connected components---is essentially a graphical procedure.

This paper is organized as follows. In Section \ref{sec:preliminaries}, we review the basic formalism of reaction networks. We present the conditions for persistence in \cite{angeli--deleenheer--sontag-2007c} and \cite{deshpande--gopalkrishnan-2014} in the form we shall use in this work, and discuss their relationship with boundary steady states. A few trivial but notable examples we shall refer to several times throughout the work are given, and persistence is characterized for monomolecular networks in terms of strong connectedness of its connected components. In Section \ref{sec:operations}, we define the concepts of intermediates and catalysts. We describe the networks obtained from their removal, and formally state our main results (Theorems \ref{thm:main}, \ref{thm:main2} and \ref{thm:primitive}), concerning how these operations do not break the aforementioned conditions for persistence. Some biologically relevant examples are presented in Section \ref{sec:examples}, the most important of which being cascades of a class of PTM systems. In Section \ref{sec:proofofmainresult} we return to our main results, giving the details of the {proofs}. A short appendix with some auxiliary technical results is presented at the end.

\section{Reaction Networks}\label{sec:preliminaries}

In what follows we denote the set of nonnegative real (respectively, integer) numbers by $\rplus$ (respectively, $\zplus)$, and denote the set of strictly positive real (respectively, integer) numbers by $\rpluss$ (respectively, $\zpluss$). 
{We denote the boundary of the nonnegative orthant by $\partial \rplus^n$.}
Given $x \in \r^n$, for some $n \in \zpluss$, we write $x \geqslant 0$ to mean that $x \in \rplus^n$, that is, each coordinate of $x$ is nonnegative. We write $x > 0$ to mean that $x \geqslant 0$, and at least one coordinate of $x$ is positive, and write $x \gg 0$ to mean that $x \in \rpluss^n$, in other words, each coordinate of $x$ is strictly positive. For any finite set $X$, the notation $|X|$ represents the number of elements of $X$. Given $n \in \zpluss$, we write $[n] := \{1, \ldots, n\}$. By convention $[0] := \varnothing$.

\subsection{Basic Formalism}

In this work we take the approach of defining reaction networks from their reaction graphs. Thus, a {\em reaction network} is an ordered triple $G = (\calS, \calC, \calR)$ in which $\calS$ is a finite, possibly empty set, $\calC$ is a finite subset of $\rplus^n$, where $n := |\calS|$, and $(\calC, \calR)$ is a digraph with no self-loops. The set $\calS$ is called the {\em species} set of the reaction network. Its elements are tacitly assumed to be ordered in some fixed way, say,
\[
	\calS = \{S_1, \ldots, S_n\}\,.
\]
We identify the elements $(\alpha_1, \ldots, \alpha_n)$ of $\calC$, called the {\em complexes} of the reaction network, with the formal linear combinations of species
\[
	\alpha_1S_1 + \cdots + \alpha_nS_n\,.
\]
The digraph $(\calC, \calR)$ is called the {\em reaction graph} of $G$, and its edges are referred to as the {\em reactions} of the network. We further assume that each complex takes part in at least one reaction, and that each species is part of at least one complex. Formally, this means that each vertex of $(\calC, \calR)$ has indegree or outdegree at least one, and that for each $i \in [n]$, there exists $(\alpha_1, \ldots, \alpha_n) \in \calC$ such that $\alpha_i > 0$. It follows that {$\calS = \varnothing \Leftrightarrow \calC = \varnothing \Leftrightarrow \calR = \varnothing$, in which case the network} is referred to as the {\em empty reaction network}.

The reactions are also tacitly assumed to be ordered in some fixed way, say,
\[
	\calR = \{R_1, \ldots, R_m\}\,,
\]
where $m := |\calR|$. We often express the reaction 
\[
R_j =  \big((\alpha_{1j}, \ldots, \alpha_{nj}), (\alpha'_{1j}, \ldots, \alpha'_{nj}) \big)
\]
as
\[
	R_j\colon
	\quad
	\sum_{i = 1}^n \alpha_{ij}S_i \longrightarrow \sum_{i = 1}^n \alpha'_{ij}S_i\,,
	\quad
	j = 1, \ldots, m\,.
\]
The complex on the lefthand side is referred to as the {\em reactant} of the reaction, while the complex on the righthand side is referred to as its {\em product}. The species $S_i$ such that $\alpha_{ij} > 0$ are, accordingly, called the {\em reactants} of $R_j$, while the species $S_i$ for which $\alpha'_{ij} > 0$ are called the {\em products} of the reaction.

A {\em reaction path} in $G$ is a directed path in the digraph $(\calC, \calR)$, that is, a sequence of reactions 
$$ y_0\rightarrow y_1\rightarrow \dots \rightarrow y_{k-1} \rightarrow y_k$$
such that $y_{j-1} \rightarrow y_j \in \calR\,$ for all $j \in [k]$ and all complexes are different.
Similarly,  an {\em undirected reaction path} in $G$ is a path in the undirected graph underlying  $(\calC, \calR)$.
In this case, we write $y_0 \ \textrm{---} \ y_1 \ \textrm{---} \ \cdots \ \textrm{---} \ y_k$, where each `---' can either be `$\leftarrow$' or `$\rightarrow$' in $(\calC, \calR)$.
By abuse of terminology, we refer to the connected components of the reaction graph $(\calC, \calR)$ as the connected components of $G$.

With the above notation, we   define the $n \times m$ matrix $N$,
\[
	N_{ij} := \alpha'_{ij} - \alpha_{ij}\,,
	\quad
	i = 1, \ldots, n\,,
	\quad
	j = 1, \ldots, m\,,
\]
known as the {\em stoichiometric matrix} of the network. The column-space of $N$, which is a subset of $\r^n$, is called the {\em stoichiometric subspace of $G$}, and denoted by $\Gamma$. The sets $(s_0 + \Gamma) \cap \rplus^n$, $s_0 \in \rplus^n$, are called the {\em stoichiometric compatibility classes} of $G$. Let
	\[
		Q_j := \{i \in [n]\,|\ \alpha_{ij} > 0\}\,,
		\quad
		j = 1, \ldots, m\,,
	\]
be the subset of indices corresponding to the reactants of $R_j$.

The system of differential equations governing the evolution of the concentrations of the species of the network is given by
\begin{equation}\label{eq:crntode} 
\frac{ds}{dt} = Nr(s(t))\,,
\quad
t \geqslant 0\,,
\quad
s \geqslant 0\,,
\end{equation}
where $r\colon \rplus^n \rightarrow \rplus^m$ is a vector-valued function modeling the kinetic rates of each reaction as functions of the reactant species, henceforth referred to simply as the {\em vector of reaction rates}. We shall assume throughout this work that the vector of reaction rates satisfies the following hypotheses:

\begin{itemize}
	\item[(\rone)] $r = (r_1, \ldots, r_{m})\colon \calO \rightarrow \r^{m}$ is continuously differentiable on a neighborhood $\calO$ of $\r_{\geqslant 0}^{n}$, and $r(s) \geqslant 0$ for every $s \geqslant 0$.

	\item[(\rtwo)] For each $j \in [m]$, and for each $s = (s_1, \ldots, s_{n}) \in \r_{\geqslant 0}^{n}$,
	\[
		r_j(s) = 0
		\quad
		\Leftrightarrow
		\quad
		s_i = 0
		\quad
		\text{for some} \ i \in Q_j\,.
	\]

	\item[(\Nrone)] The flow of (\ref{eq:crntode}) is forward-complete; in other words, for any initial state, the (unique) maximal solution of the corresponding initial value problem in (\ref{eq:crntode}) is defined for all $t \geqslant 0$.
\end{itemize}

We note that (r1)--{(r2)} are satisfied under the most common kinetic assumptions in the literature, namely, mass-action, or more general power-law kinetics, Michaelis-Menten kinetics, or Hill kinetics, as well as combinations of these \cite[pages 585--586]{angeli--deleenheer--sontag-2010}. 
We also note that it follows from (r1) and (\rtwo) that  the non-negative and positive  orthants, $\r_{\geqslant 0}^{n}$ and $\r_{> 0}^n$, are forward invariant for the flow of (\ref{eq:crntode}) (see, for instance, \cite[Section VII]{Sontag:2001} or \cite[Section 16]{Amann}).

We will often give a reaction network by simply listing all the reactions in the network. When we do so, the sets of species and complexes will be tacitly implied. For instance,
\[
	G\colon
	\quad
	S_1 + S_2 \longleftrightarrow S_3 \longrightarrow S_1 + S_4
\]
is the reaction network $G = (\calS, \calC, \calR)$ obtained by setting
\[
	\calS := \{S_1, S_2, S_3, S_4\},\qquad 
	\calC := \{S_1 + S_2, S_3, S_1 + S_4\}
\]
and
\[
	\calR := \{S_1 + S_2 \rightarrow S_3, S_3 \rightarrow S_1 + S_3, S_3 \rightarrow S_1 + S_4\}
\]
in the formalism above.

\begin{definition}[Implied Subnetworks]\label{def:implied_network}
	Let $G = (\calS, \calC, \calR)$ be a reaction network, and $\calE \subseteq \calS$ be a subset of species. We define the {\em subnetwork implied by $\calE$} as the network $G_\calE = (\calS_\calE, \calC_\calE, \calR_\calE)$ consisting of reactions of $G$ which involve exclusively species in $\calE$. More precisely, $\calR_\calE \subseteq \calR$ is the subset of reactions
	\[
		\sum_{i = 1}^n \alpha_{i}S_i \longrightarrow \sum_{i = 1}^n \alpha'_{i}S_i
	\]
such that {$\alpha_i = \alpha'_i = 0$ for every $i \in [n]$ such that $S_i \notin \calE$}. We then define $\calC_\calE \subseteq \calC$ to be the subset of complexes that appear as reactant or product of some reaction in $\calR_\calE$. Finally, $\calS_\calE \subseteq \calS$ is defined as the subset of species which are part of some complex in $\calC_\calE$.
\mytriangle	
\end{definition}
Although it is always true that $\calS_\calE \subseteq \calE$, it may be the case that $\calS_\calE \neq \calE$. To see this, consider the reaction network $G$ with  $\calR=\{S_1 + S_2 \longrightarrow S_3 + S_4, S_4 \longrightarrow S_2\}$ and set $\calE := \{S_1, S_2, S_4\}$. Then $G_\calE$ consists of the reaction $S_4 \longrightarrow S_2$.
In particular, $\calS_\calE = \{S_2, S_4\} \subsetneq \{S_1, S_2, S_4\} = \calE$.

\subsection{Siphons, P- and T-Semiflows, Drainable Sets and Self-Replicable Sets}

A few more concepts pertaining to reaction networks are needed. Some of the terminology below is adapted from Petri net theory. See \cite{angeli--deleenheer--sontag-2007c} for the context. But since no results from Petri net theory itself are needed, we chose to define these concepts as directly pertaining to their respective reaction networks, rather than the Petri nets associated with them.

\begin{definition}[Siphons]\label{def:cnrt-siphons}
	A nonempty subset of species $\Sigma \subseteq \calS$ is called a {\em siphon} if every reaction which has a product in $\Sigma$ also has a reactant in $\Sigma$. A siphon is said to be {\em minimal} if it does not properly contain any other siphon.
	\mytriangle
\end{definition}

\begin{example}[Single Phosphorylation Mechanism]
	The minimal siphons of the single phosphorylation mechanism from the Introduction (\ref{eq:prototype}) are $\{E, ES_0\}$, $\{F, FS_1\}$, and $\{S_0, S_1, ES_0, FS_1\}$.
	\mydiamond
\end{example}

\begin{remark}\label{rk:siphonpath}
Let $y \rightarrow y_1 \rightarrow \cdots \rightarrow y_k \rightarrow y'$ be a reaction path in a reaction network $G$, and suppose $\Sigma$ is a siphon containing some species $S'$ that is part of $y'$. Then each of the complexes $y, y_1,\dots,y_k$ must have at least one of its species in $\Sigma$.
\mybox
\end{remark}

Given a vector $\omega = (\omega_1, \ldots, \omega_n) \in \rplus^n$ associated with the species set $\calS$ of a reaction network $G = (\calS, \calC, \calR)$, its {\em support} is defined to be the subset of species $\supp \omega := \{S_i \in \calS\,|\ \omega_{i} > 0\}$. Similarly, given a vector $v = (v_1, \ldots, v_m) \in \rplus^m$ associated with the reaction set $\calR$ of $G$, its {\em support} is defined to be the subset of reactions $\supp v := \{R_j \in \calR\,|\ v_j > 0\}$. Although we use the same notation in both cases, it will be clear from the context whether the underlying vector is associated with the species or the reaction set.

\begin{definition}[P- and T-Semiflows]
	A {\em P-semiflow} or {\em positive conservation law} of a reaction network is any nonzero vector $\omega \in \rplus^{n}$ such that $\omega^TN = 0$. We say that a reaction network is {\em conservative} if it has a strictly positive P-semiflow $\omega \gg 0$, that is, $\supp \omega = \calS$. A {\em T-semiflow} of a reaction network is any nonzero vector $v \in \rplus^m$ such that $Nv = 0$. We say that a reaction network is {\em consistent} if it has a strictly positive T-semiflow $v \gg 0$, that is, if $\supp v = \calR$.
	\mytriangle
\end{definition}

\begin{definition}[Siphon/P-Semiflow Property]	We say that a reaction network has the {\em siphon/P-semiflow property} if every siphon contains the support of a P-semiflow.	
\mytriangle	
\end{definition}

Nonempty sets of species not containing the support of a P-semiflow are also known in the literature as {\em critical} \cite{deshpande--gopalkrishnan-2014}. So, a reaction network has the siphon/P-semiflow property if, and only if every siphon is noncritical.

Note that, since every siphon is either itself minimal, or else {contains} a minimal siphon, we need only check {whether every minimal siphon contains the support of a P-semiflow}. We give a couple more trivial examples. Besides further illustrating the scope of the concepts just introduced, they will be used several times in the analysis of more elaborate examples further down.

\begin{example}[Empty Networks]\label{ex:emptynetwork}
	Our formalism allows for reaction networks to be empty. Any such network is vacuously conservative, consistent, and also has the siphon/P-semiflow property.
\mydiamond
\end{example}

\begin{example}[Inflows]\label{ex:inflows} 
	Consider a reaction network $G = (\calS, \calC, \calR)$. If one can find a reaction path in $G$ of the form
	\[
		0 \longrightarrow y_1 \longrightarrow \cdots \longrightarrow y_k,
	\]
	then {by Remark~\ref{rk:siphonpath}}, none of the species that are a part of any of the complexes $y_1, \ldots, y_k$  belongs to a siphon {because no species is part of the complex $0$}. This observation may drastically reduce the number of species one is concerned about in checking the siphon/P-semiflow property.
	
	In particular, if $G$ is such that $0 \rightarrow S \in \calR$ for each $S \in \calS$, then $G$ has no siphons. {In this case, $G$ has vacuously the siphon/P-semiflow property.}
\mydiamond
\end{example}

We next introduce the concepts of drainable and self-replicable siphons. In \cite[Definition 3.1(2--3)]{deshpande--gopalkrishnan-2014}, these concepts were defined in terms of ``$G$-reaction pathways.'' 
We show in Proposition \ref{prop:understood} in Subsection \ref{subsec:equivalence} in the appendix that both definitions are equivalent. This equivalence is already implicitly used in the proofs of the results in  \cite{deshpande--gopalkrishnan-2014}.

\begin{definition}[Drainable and Self-Replicable Sets]\label{def:d_or_sr}
	Let $G = (\calS, \calC, \calR)$ be a reaction network. A nonempty subset of species $\Sigma \subseteq \calS$ is said to be {\em drainable} if there exists a sequence of reactions $y_1 \rightarrow y_1', \ldots, y_k \rightarrow y_k' \in \calR$  such that
	\[
	\left( \sum_{j = 1}^k (y_j' - y_j) \right)_{\hspace{-0.1cm} i}    < 0\,,
		\quad
		\forall i \in [n]\, \colon \ S_i \in \Sigma\,.
	\]
	If there exists one such a sequence of reactions such that
	\[
		\left( \sum_{j = 1}^k (y_j' - y_j) \right)_{\hspace{-0.1cm} i}    > 0\,,
		\quad
		\forall i \in [n]\, \colon \ S_i \in \Sigma\,,
	\]
	then $\Sigma$ is said to be {\em self-replicable}. In either case, the reactions need not be pairwise distinct.
\mytriangle	
\end{definition}

We summarize some properties of critical, drainable and self-replicable siphons we will need further down.

\begin{proposition}\label{prop:connection}
Let $G = (\calS, \calC, \calR)$ be a reaction network, and $\Sigma \subseteq \calS$ a nonempty subset. Then,
	\begin{itemize}
		\item[{\em(}i\,{\em)}] if $\Sigma$ is drainable or self-replicable, then it is critical; and
		
		\item[{\em(}ii\,{\em)}] if $\Sigma$ is a minimal critical siphon, then it is drainable or self-replicable.
	\end{itemize}

\end{proposition}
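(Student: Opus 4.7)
For part (i), I would give a direct duality check: if $\Sigma$ is drainable, pick $v \in \rplus^m$ (recording the multiplicities of the reactions in the witnessing sequence) such that $(Nv)_i < 0$ for every $S_i \in \Sigma$. If $\omega \in \rplus^n$ is a P-semiflow with $\supp \omega \subseteq \Sigma$, then $0 = \omega^T N v = \sum_{S_i \in \supp \omega} \omega_i (Nv)_i < 0$, a contradiction. The self-replicable case is identical up to a sign. Hence drainable or self-replicable sets are automatically critical.

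The substance of the proposition lies in (ii). My plan is to first reformulate non-drainability and non-self-replicability via a theorem of alternatives. Let $A$ denote the submatrix of $N$ whose rows are indexed by $\Sigma$. Standard LP duality applied to the program $\max\{t : Av + t \mathbf{1} \leq 0,\ v \geq 0\}$ shows that the primal is unbounded above exactly when $\Sigma$ is drainable; otherwise the dual is feasible and produces a nonzero $\omega_1 \in \rplus^n$ with $\supp \omega_1 \subseteq \Sigma$ and $N^T \omega_1 \geq 0$. Reversing signs yields, in case $\Sigma$ is not self-replicable, a nonzero $\omega_2 \in \rplus^n$ with $\supp \omega_2 \subseteq \Sigma$ and $N^T \omega_2 \leq 0$. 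I expect the main technical hurdle to be nailing down the right form of Farkas or LP duality so that the support constraint and the strict componentwise inequalities match up cleanly; everything downstream is combinatorial.

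Now suppose for contradiction that $\Sigma$ is minimal critical but neither drainable nor self-replicable, so both $\omega_1, \omega_2$ above exist. The key combinatorial observation is that whenever $\omega \in \rplus^n$ is nonzero with $N^T \omega \leq 0$, the set $\supp \omega$ is automatically a siphon: a reaction $y \to y'$ with a product in $\supp \omega$ has $\omega^T y' > 0$, and $N^T \omega \leq 0$ then forces $\omega^T y \geq \omega^T y' > 0$, so $y$ must contain a reactant in $\supp \omega$. Applied to $\omega_2$, this produces a sub-siphon $\supp \omega_2 \subseteq \Sigma$ which is critical (subsets of critical sets are critical), so minimality of $\Sigma$ forces $\supp \omega_2 = \Sigma$; in particular $\omega_2 \gg 0$ on $\Sigma$.

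To reach the contradiction, I rescale $\omega_1$ by the positive multiplier $c := \min_{S_i \in \supp \omega_1} \omega_{2,i}/\omega_{1,i}$, attained at some $S_{i_0} \in \supp \omega_1$. After the rescaling one has $\omega_1 \leq \omega_2$ componentwise with $\omega_{1, i_0} = \omega_{2, i_0}$. Set $\tilde\omega := \omega_2 - \omega_1$. Then $\tilde\omega \geq 0$, the equality $\tilde\omega_{i_0} = 0$ gives $\supp \tilde\omega \subsetneq \Sigma$, and $N^T \tilde\omega = N^T \omega_2 - N^T \omega_1 \leq 0$. Moreover $\tilde\omega \neq 0$: otherwise $\omega_1 = \omega_2$ would make $N^T \omega_1$ simultaneously $\geq 0$ and $\leq 0$, so $\omega_1$ would be a P-semiflow with support in $\Sigma$, contradicting criticality. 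Applying the key observation once more, this time to $\tilde\omega$, yields a proper critical sub-siphon of $\Sigma$, contradicting minimality and completing the proof.
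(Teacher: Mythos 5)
Your proof is correct, but note that the paper does not actually prove this proposition: it simply cites \cite[Theorem 5.3]{deshpande--gopalkrishnan-2014}, so what you have written is a self-contained replacement for a citation rather than an alternative to an in-paper argument. Your route --- a Gordan/LP-duality alternative converting ``not drainable'' (resp.\ ``not self-replicable'') into a nonzero $\omega_1 \geqslant 0$ supported in $\Sigma$ with $N^T\omega_1 \geqslant 0$ (resp.\ $\omega_2$ with $N^T\omega_2 \leqslant 0$), the observation that $\supp\omega$ is a siphon whenever $\omega \geqslant 0$ is nonzero and $N^T\omega \leqslant 0$, and the subtraction $\omega_2 - c\,\omega_1$ to manufacture a proper nonempty critical sub-siphon --- is essentially the standard argument underlying the cited theorem, and every step checks out; moreover it works under either reading of ``minimal critical siphon'' (minimal among critical siphons, or a minimal siphon that is critical), since in both cases the contradiction comes from a proper critical sub-siphon. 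Two small points to make airtight if you write this up. First, Definition~\ref{def:d_or_sr} phrases drainability via a finite sequence of reactions, i.e.\ a nonzero \emph{integer} multiplicity vector, whereas your LP alternative lives over the reals; you need the standard fact that a finite system of linear strict and non-strict inequalities with rational data that has a real solution also has a rational (hence, after scaling, integer) solution, so the two formulations of ``drainable'' coincide. Second, in part (i) the sum $\sum_{S_i \in \supp\omega} \omega_i (Nv)_i$ is strictly negative only because a P-semiflow is by definition nonzero, so $\supp\omega \neq \varnothing$; that is worth stating explicitly.
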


\begin{proof}
	See \cite[Theorem 5.3]{deshpande--gopalkrishnan-2014}.
\end{proof}

\begin{corollary}\label{cor:equivcrit}
A reaction network $G$ has the siphon/P-semiflow property if and only if $G$ does not have any drainable or self-replicable siphons.
\end{corollary}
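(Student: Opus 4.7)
The plan is to unpack the definitions and apply Proposition \ref{prop:connection} in each direction. Recall that, by definition, the siphon/P-semiflow property is equivalent to the statement that $G$ has no critical siphons (a siphon fails to contain the support of a P-semiflow exactly when it is critical). So the corollary reduces to the equivalence
\[
\text{$G$ has no critical siphons}
\quad\Longleftrightarrow\quad
\text{$G$ has no drainable or self-replicable siphons.}
\]

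The forward implication is immediate from Proposition \ref{prop:connection}(i). If $G$ has no critical siphons, then in particular it has no siphon which is drainable or self-replicable, since any such siphon would be critical by (i).

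For the reverse implication I would argue by contrapositive: assuming $G$ has some critical siphon $\Sigma \subseteq \calS$, produce a drainable or self-replicable siphon. First I would pass to a minimal siphon $\Sigma_0 \subseteq \Sigma$, which exists by finiteness of $\calS$. The key small observation is that $\Sigma_0$ inherits criticality from $\Sigma$: if some P-semiflow $\omega$ satisfied $\supp\omega \subseteq \Sigma_0$, then also $\supp\omega \subseteq \Sigma$, contradicting that $\Sigma$ is critical. Hence $\Sigma_0$ is a critical siphon, and being minimal as a siphon, it is a fortiori a minimal critical siphon. Proposition \ref{prop:connection}(ii) then delivers that $\Sigma_0$ is drainable or self-replicable, completing the contrapositive.

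There isn't really a main obstacle here; the whole content of the corollary lies in Proposition \ref{prop:connection}, and the only bookkeeping to watch is the distinction between ``minimal siphon'' and ``minimal critical siphon,'' which is resolved by the simple monotonicity remark that any subset of a critical set is critical.
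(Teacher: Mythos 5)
Your proof is correct and is exactly the deduction the paper intends: the corollary is stated without an explicit argument precisely because it follows from Proposition \ref{prop:connection} together with the observations, already made in the text, that the siphon/P-semiflow property is equivalent to the absence of critical siphons and that it suffices to check minimal siphons. Your handling of the ``minimal siphon'' versus ``minimal critical siphon'' distinction via the monotonicity of criticality under taking subsets is the right (and only) point requiring care, and you resolve it correctly.
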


\subsection{Persistence and Boundary Steady States}\label{subsec:persistence}

The existence of drainable siphons and the siphon/P-semiflow property are linked to persistence and the existence of boundary steady states. The connection is made precise in this subsection, where we compile results from \cite{deshpande--gopalkrishnan-2014,angeli--deleenheer--sontag-2007c,shiu--sturmfels-2010}.

Intuitively, persistence (of a reaction network) is the property that no species concentration goes below a certain threshold as the system evolves, as long as they were initially all positive. This threshold may depend on the initial conditions though. In order to formulate this more precisely, let $\sigma\colon \rplus \times \rplus^n \rightarrow \rplus^n$ be the {semiflow} of (\ref{eq:crntode}). In other words, for each initial state $s_0 \in \rplus^n$, $\sigma(\cdot, s_0) \colon \rplus \rightarrow \rplus^n$ is the unique, solution of (\ref{eq:crntode}). The solution is unique in virtue of (r1), and defined for all $t \geqslant 0$ on account of (r3).

\begin{definition}[Persistence]\label{def:persistence}
	A reaction network (\ref{eq:crntode}) is said to be {\em persistent} if
	\begin{equation}\label{eq:persistent}
		\liminf_{t \to \infty} \sigma_i(t, s_0) > 0\,,
		\quad
		\forall i \in [n]\,,
	\end{equation}
	for every initial state $s_0 \gg 0$.
\mytriangle	
\end{definition}

We also introduce a weaker notion of persistence. First, recall that, for each $s_0 \geqslant 0$, the {\em $\omega$-limit set of $s_0$} is the set
	\[
		\omega(s_0) := \bigcap_{\tau \geqslant 0} \overline{\bigcup_{t \geqslant \tau} \{\sigma(t, s_0)\}}\,.
	\]
Note that $s \in \omega(s_0)$ if, and only if there exists a sequence $(t_k)_{k \in \n}$ going to infinity in $\rplus$ such that
	\[
		\lim_{k \to \infty} \sigma(t_k, s_0) = s\,.
	\]

\begin{definition}[Bounded-Persistence]\label{def:boundedpersistence}
	A reaction network (\ref{eq:crntode}) is said to be {\em bound\-ed-persistent} if $\omega(s_0) \cap \partial\rplus^{n} = \varnothing$ for each $s_0 \gg 0$.
\mytriangle
\end{definition}

A {\em steady state} of a reaction network $G$ is any point $s_0 \geqslant 0$ such that $Nr(s_0) = 0$.

\begin{definition}[Boundary Steady State] 
A {\em boundary steady state} is any point $s_0 \in \partial \rplus^n$ such that $Nr(s_0) = 0$, in other words, any steady state that lies on the boundary. 
\mytriangle
\end{definition}

In the following proposition we collect  relationships among 
persistence, bounded-persistence, consistence, drainable siphons and the siphon/P-semiflow property. Details of the proof are given in Subsection~\ref{app:prop2} in the appendix.

\begin{proposition} \label{prop:persistent2bounded_persistent}
Consider a reaction network $G$.
\begin{enumerate}[{\em(}i\,{\em)}]
\item If $G$ is persistent, then it is bounded-per\-sist\-ent.
\item If $G$ is conservative and bounded-persistent, then it is persistent.
\item If $G$ is conservative and persistent, then it is consistent.	
\item If $G$ has no drainable siphons, then it is bounded-persistent.
\item If $G$ has the siphon/P-semiflow property, then the stoichiometric compatibility classes of $G$ that are not entirely contained in the boundary do not contain any boundary steady states.
\end{enumerate}
\end{proposition}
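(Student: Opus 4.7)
The plan is to handle each item in sequence; (i) and (ii) follow essentially from the definitions, (iv) is a direct citation of \cite{deshpande--gopalkrishnan-2014}, and the substantive work lies in (iii) and (v). For (i), if some $s\in\omega(s_0)\cap\partial\rplus^n$ existed, then $s_i=0$ for some $i$, and a subsequence $\sigma_i(t_k,s_0)\to 0$ would contradict $\liminf_{t\to\infty}\sigma_i(t,s_0)>0$. For (ii), conservativity forces the trajectory from any $s_0\gg 0$ to remain inside the compact stoichiometric compatibility class of $s_0$; if $\liminf_{t\to\infty}\sigma_i(t,s_0)=0$ for some $i$ along a sequence $t_k\to\infty$, compactness extracts a convergent subsequence whose limit lies in $\omega(s_0)\cap\partial\rplus^n$, contradicting bounded-persistence.

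For (iii) I would use a time-average argument on the vector of reaction rates. Fix $s_0\gg 0$. Conservativity makes the trajectory bounded, while persistence together with (r2) ensures that for every $j\in[m]$ there exists $\delta_j>0$ such that $r_j(\sigma(t,s_0))\geqslant\delta_j$ for all sufficiently large $t$ (because $\sigma(t,s_0)$ eventually stays in a compact set bounded away from $\partial\rplus^n$, on which each $r_j$ attains a positive minimum). Setting $v^{(T)}:=\tfrac{1}{T}\int_0^T r(\sigma(t,s_0))\,dt$, the identity $Nv^{(T)}=(\sigma(T,s_0)-s_0)/T$ yields $Nv^{(T)}\to 0$ as $T\to\infty$, while $(v^{(T)})_{T>0}$ is itself bounded. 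Any subsequential limit $v$ then satisfies $Nv=0$, $v\geqslant 0$ and $v_j\geqslant\delta_j>0$ for every $j$, so $v\gg 0$ is a strictly positive T-semiflow.

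For (v), suppose by contradiction that some boundary steady state $s_0$ lies in a compatibility class which also contains a point $s^*\gg 0$, and let $\Sigma:=\{S_i\,:\,s_{0,i}=0\}\ne\varnothing$. The heart of the argument is to show that $\Sigma$ is a siphon. Take any reaction $R_j$ with product $S_k\in\Sigma$. The $k$-th coordinate of $Nr(s_0)=0$ reads $\sum_{j'}(\alpha'_{k,j'}-\alpha_{k,j'})r_{j'}(s_0)=0$; every reaction $R_{j'}$ having $S_k$ as a reactant contributes zero by (r2) (since $s_{0,k}=0$), leaving a sum of nonnegative terms coming from reactions $R_{j'}$ with $\alpha_{k,j'}=0<\alpha'_{k,j'}$, each of which must therefore satisfy $r_{j'}(s_0)=0$; by (r2) again, such an $R_{j'}$ has a reactant in $\Sigma$. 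Applied to $R_j$ itself, this shows that $R_j$ has a reactant in $\Sigma$---namely $S_k$ if $\alpha_{k,j}>0$, or some other species otherwise. The siphon/P-semiflow property then produces a P-semiflow $\omega$ with $\varnothing\ne\supp\omega\subseteq\Sigma$, and the conservation law $\omega^Ts^*=\omega^Ts_0=0$ contradicts $s^*\gg 0$ together with $\omega>0$.

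The main obstacle, and the only step where the structural hypotheses (r1)--(r2) interact nontrivially with the combinatorics of the reaction graph, is the verification in (v) that the zero-coordinate set of a boundary steady state is always a siphon. The remaining parts are standard dynamical-systems manipulations, apart from (iv), which I would cite directly from \cite{deshpande--gopalkrishnan-2014}.
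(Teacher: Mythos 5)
Your proof is correct, and for items (i), (ii), (iv) and the key lemma in (v) it is essentially identical to the paper's: (i) and (ii) are the same $\omega$-limit/compactness manipulations (the paper isolates (ii) as Lemma \ref{lemm:bounded_persistent2persistent} after invoking compactness of the stoichiometric compatibility classes of conservative networks), (iv) is cited from \cite{deshpande--gopalkrishnan-2014} in both treatments, and your verification that the zero-coordinate set $Z(s_0)$ of a boundary steady state is a siphon is exactly the paper's Lemma \ref{lemm:bss1}, including the splitting of the steady-state sum according to whether $\alpha_{kj'}>0$ or $\alpha_{kj'}=0<\alpha'_{kj'}$ and the double use of (r2). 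The two places where you diverge are both substitutions of a direct argument for a citation. For (iii) the paper simply points to \cite[Theorem 1]{angeli--deleenheer--sontag-2007c}, whereas you reprove it via the Ces\`aro average $v^{(T)}=\tfrac1T\int_0^T r(\sigma(t,s_0))\,dt$; your argument is sound (persistence plus conservativity traps the tail of the trajectory in a compact set off the boundary, where each $r_j$ has a positive minimum by (r2), so every subsequential limit of $v^{(T)}$ is a strictly positive T-semiflow) and has the merit of being self-contained under the paper's hypotheses (r1)--(r3). For the conclusion of (v) the paper invokes the equivalence of items 1 and 3 in \cite[Theorem 3.7]{deshpande--gopalkrishnan-2014}, while you argue directly that a P-semiflow $\omega$ supported in $Z(s_0)$ forces $\omega^{T}s=\omega^{T}s_0=0$ throughout the compatibility class, contradicting the existence of a point $s^*\gg 0$ there; this is the standard underlying argument and is arguably cleaner. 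The only cosmetic caveat is that in (iv) the cited theorem of \cite{deshpande--gopalkrishnan-2014} is stated under more restrictive kinetics, so, as the paper does, you should remark that its proof goes through under (r1)--(r3) rather than cite it verbatim.
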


Conservative networks are a special case of dissipative networks (Definition \ref{def:dissipative}), for which bounded-persistence is also equivalent to persistence. These will be discussed in Subsection \ref{subsec:dissipative}.

\begin{remark}\label{rem:substitution}
In view of Corollary~\ref{cor:equivcrit}, if a reaction network has the siphon/P-semiflow property, then it has no drainable siphons and therefore it is bounded-persistent by Proposition~\ref{prop:persistent2bounded_persistent}({\em iv}\,).
\mybox
\end{remark}

The next example shows that not having any drainable siphons is not in general a necessary condition for the {bounded-}persistence of reaction networks.
\begin{example}[Lotka-Volterra Predator-Prey Model]\label{ex:lv}
	The Lotka-Volterra equations,
	\begin{equation}\label{eq:lv}
 \frac{dN}{dt} = N(t)(a - bP(t)) \qquad\qquad  \frac{dP}{dt} = P(t)(cN(t) - d)\,,	
	\end{equation}
	where $a, b, c, d$ are positive parameters, model the population sizes at time $t \geqslant 0$ of a predator species, $P(t)$, and its prey, $N(t)$, under the assumptions that $N(t)$ grows exponentially in the absence of predators, $P(t)$ decays exponentially in the absence of prey, and that both the growth rate of $P(t)$ and the depletion rate of $N(t)$ on account of predation are directly proportional to the population counts $N(t)$ and $P(t)$.
	
	Equations (\ref{eq:lv}) can be derived as (\ref{eq:crntode}) from the reaction network
	\begin{equation}\label{eq:lv_crnt}
		N \rightarrow 2N
		\quad\quad
		N + P \rightarrow P
		\quad\quad
		N + P \rightarrow N + 2P
		\quad\quad
		P \rightarrow 0\,,
	\end{equation}
	under mass-action kinetics (see, for instance, \cite{gunawardena-2003} for an account of mass-action kinetics). {Solutions of (\ref{eq:lv}) are known to be uniformly bounded away from zero. In fact, they are periodic \cite[Section 3.1]{murray-2002}. In particular, (\ref{eq:lv_crnt}) is bounded-persistent. However, the} minimal siphons of (\ref{eq:lv_crnt}) are $\{N\}$ and $\{P\}$, both of which are drainable on account of reactions $N + P \rightarrow P$ and $P \rightarrow 0$, respectively.
	\mydiamond
\end{example}

\begin{example}[Non-Drainable Siphons and Boundary Steady States]
The absence of drainable siphons does not in general preclude boundary steady states in stoichiometric compatibility classes that meet the interior of the positive orthant. For example, consider the reaction network with the reaction
$$ S \rightarrow 2S.$$
This reaction network has one stoichiometric compatibility class, namely $\rplus$, and a boundary steady state. But it has no drainable siphons.
\mydiamond
\end{example}

\subsection{Monomolecular Networks}

Iterating the simplification procedures discussed in this work will often result in what we shall refer to as monomolecular networks. Intuitively, these are reaction networks in which each reactant or product consists of at most a single species. The precise definition is given below in Definition \ref{def:monomolecular_definition}. 
For conservative monomolecular networks, the necessary and sufficient conditions for persistence given in Proposition \ref{prop:persistent2bounded_persistent}({\em iii}\,) and ({\em iv}\,) are actually equivalent, and characterized by the strong connectedness of the connected components of the network (Proposition \ref{prop:monomolecular_equivalence}).

\begin{definition}[Monomolecular Networks]\label{def:monomolecular_definition}
A reaction network $G = (\calS, \calC, \calR)$ is said to be {\em monomolecular} if, for each $y \in \calC$, either $y = 0$ or $y = S_{i}$ for some $i \in [n]$. In this case, we identify the nonzero {`}complexes{'} of $G$ with the corresponding {`}species.{'}
	\mytriangle
\end{definition}

\newcommand{\ilr}[1]{{\em (}#1\,{\em )}}
\newcommand{\olr}[1]{({\em #1}\,)}
\begin{proposition}\label{prop:monomolecular_equivalence}
Let $G = (\calS, \calC, \calR)$ be a monomolecular reaction network, and consider and the following seven properties.
\begin{enumerate}[{\em(}i\,{\em)}]
		\item $G$ is consistent.
		
		\item Each connected component of $G$ is strongly connected.
		
		\item $G$ has the siphon/P-semiflow property.
		
		\item $G$ has no drainable siphons.
		
		\item $G$ is {bounded}-persistent.
		
		\item $G$ is persistent.
\end{enumerate}
 Then the following implications hold:
\begin{center}
\ilr{i} $\Rightarrow$  \ilr{ii} $\Rightarrow$ \ilr{iii} $\Rightarrow$ \ilr{iv} $\Rightarrow$ \ilr{v} $\Leftarrow$ \ilr{vi}.
\end{center}
If the reaction network is conservative, then the six properties are equivalent.
\end{proposition}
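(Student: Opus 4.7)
The plan is to prove the chain \ilr{i} $\Rightarrow$ \ilr{ii} $\Rightarrow$ \ilr{iii} $\Rightarrow$ \ilr{iv} $\Rightarrow$ \ilr{v} together with \ilr{vi} $\Rightarrow$ \ilr{v}, and then to close the loop under the conservativity assumption using Proposition \ref{prop:persistent2bounded_persistent}.

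For \ilr{i} $\Rightarrow$ \ilr{ii}, I argue by contrapositive. Suppose some connected component $C$ of $G$ is not strongly connected and let $B \subsetneq C$ be a sink strongly connected component in the condensation of $C$; by connectedness of $C$, there is at least one reaction going from $C \setminus B$ into $B$. Set $A := B$ if $0 \notin B$ and $A := C \setminus B$ if $0 \in B$, so that $A$ consists solely of species and the Kirchhoff identities $(Nv)_i = 0$ are available for every $S_i \in A$. Summing these identities over $A$, the within-$A$ contributions cancel and the sink property of $B$ kills one further cross-term, leaving
\[
\sum_{R_j:\, y_j \in C \setminus B,\, y_j' \in B} v_j = 0\,,
\]
which contradicts $v \gg 0$ together with the existence of at least one such crossing reaction.

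For \ilr{ii} $\Rightarrow$ \ilr{iii}, I first translate the hypotheses into graph-theoretic form in the monomolecular setting: a vector $\omega \in \rplus^n$ is a P-semiflow if and only if $\omega_k = \omega_i$ along every reaction $S_k \to S_i$ and $\omega_i = 0$ whenever $S_i$ participates in any reaction involving the $0$ complex; a nonempty $\Sigma \subseteq \calS$ is a siphon if and only if it is backward-closed under species-to-species reactions and contains no species appearing as the product of a reaction $0 \to S_i$. Now let $\Sigma$ be a siphon and pick $S \in \Sigma$ in a connected component $C$. If $0 \in C$, strong connectivity provides an edge $0 \to S'$ with $S' \notin \Sigma$; propagating ``not-in-$\Sigma$'' forward along a directed path from $S'$ to $S$, using the contrapositive of the siphon condition between species and jumping across any visited $0$ vertex via the implication $0 \to S_i \Rightarrow S_i \notin \Sigma$, forces $S \notin \Sigma$, a contradiction. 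Hence $0 \notin C$; backward closure combined with strong connectivity then gives $C \subseteq \Sigma$, and the indicator $\omega := \mathbf{1}_C \in \rplus^n$ is easily checked to be a P-semiflow with $\supp \omega = C \subseteq \Sigma$.

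The remaining implications cite results already available: \ilr{iii} $\Rightarrow$ \ilr{iv} is Corollary \ref{cor:equivcrit}, \ilr{iv} $\Rightarrow$ \ilr{v} is Proposition \ref{prop:persistent2bounded_persistent}({\em iv}\,), and \ilr{vi} $\Rightarrow$ \ilr{v} is Proposition \ref{prop:persistent2bounded_persistent}({\em i}\,). When $G$ is conservative, Proposition \ref{prop:persistent2bounded_persistent}({\em ii}\,) upgrades \ilr{v} to \ilr{vi}, and Proposition \ref{prop:persistent2bounded_persistent}({\em iii}\,) then yields \ilr{vi} $\Rightarrow$ \ilr{i}, closing the cycle. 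The main technical obstacle is the bookkeeping around the $0$ complex in both \ilr{i} $\Rightarrow$ \ilr{ii} and \ilr{ii} $\Rightarrow$ \ilr{iii}: since $0$ carries no Kirchhoff constraint and cannot belong to any siphon, the summation in \ilr{i} $\Rightarrow$ \ilr{ii} must be taken over $C \setminus B$ rather than $B$ in the case $0 \in B$, and the forward propagation in \ilr{ii} $\Rightarrow$ \ilr{iii} must correctly traverse any intermediate $0$ encountered along the chosen directed path.
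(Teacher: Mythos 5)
Your proposal is correct. The implications \ilr{iii} $\Rightarrow$ \ilr{iv} $\Rightarrow$ \ilr{v} $\Leftarrow$ \ilr{vi} and the closing of the cycle under conservativity via Proposition \ref{prop:persistent2bounded_persistent} are handled exactly as in the paper, and your argument for \ilr{ii} $\Rightarrow$ \ilr{iii} is essentially the paper's: the paper also splits on whether $0$ lies in the connected component, showing that a component without $0$ is swallowed whole by any siphon meeting it and carries the indicator vector as a P-semiflow, while a component containing $0$ meets no siphon at all (via Example \ref{ex:inflows}). The genuine difference is in \ilr{i} $\Rightarrow$ \ilr{ii}. The paper observes that for a monomolecular network the stoichiometric matrix is the incidence matrix of the reaction graph with the row of the $0$ complex possibly deleted, recovers that deleted row from the fact that the rows of an incidence matrix sum to zero, and then delegates the conclusion (a strictly positive vector in the kernel of the incidence matrix forces weak reversibility) to \cite[Remark 6.1.1]{Feinbergss}. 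You instead give a self-contained cut argument: take a sink strongly connected component $B$ of the condensation of a non-strongly-connected component $C$, sum the Kirchhoff identities over whichever of $B$ or $C \setminus B$ avoids the $0$ complex, and extract the contradiction $\sum_{j:\, y_j \in C\setminus B,\ y_j'\in B} v_j = 0$ against $v \gg 0$ and the existence of at least one crossing edge. Your case split on the location of $0$ is exactly the right bookkeeping, since $0$ carries no row of $N$; the payoff of your route is that it removes the external citation, at the cost of a slightly longer argument. Both are sound.
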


\begin{proof}
Proposition~\ref{prop:persistent2bounded_persistent} and Corollary \ref{cor:equivcrit} guarantee that ({\em iii}\,) $\Rightarrow$ ({\em iv}\,)  $\Rightarrow$ ({\em v}\,) $\Leftarrow$ ({\em vi}\,) for any reaction network.
Furthermore, ({\em v}\,) $\Rightarrow$ ({\em vi}\,) $\Rightarrow$ ({\em i}\,) for conservative networks, also by Proposition~\ref{prop:persistent2bounded_persistent}.
	 
Thus, it  is sufficient to show that ({\em i}\,) $\Rightarrow$ ({\em ii}\,) and ({\em ii}\,) $\Rightarrow$  ({\em iii}\,) for arbitrary monomolecular networks.
		
	({\em i}\,) $\Rightarrow$ ({\em ii}\,).
	{Since $G$ is consistent by hypothesis, there exists a strictly positive T-semiflow $v\in \mathbb{R}^m_{>0}$, that is $Nv=0$. We prove below that $v$ is in the kernel of the incidence matrix of the reaction graph of $G$. Strong connectness of each connected component of $G$ then follows, for example, from \cite[Remark 6.1.1]{Feinbergss}.}
	
	{The incidence matrix $C_G$ of the reaction graph $(\calC,\calR)$ has $m$ columns and one row for each complex. The entries of the $j$-th column, corresponding to a reaction $R_{j} =  y \rightarrow y'$,  are all zero except for the entry corresponding to $y$, which is $-1$ and the entry corresponding to {$y'$}, which is $1$.   If $\calC=\{S_1,\dots,S_n\}$, then $C_G=N$ by definition, hence $C_G v=0$.
	If  $\calC=\{S_1,\dots,S_n,0\}$, then the first $n$ rows of $C_G$ agree  with $N$. Since {the sum of the rows of $C_G$ is zero}, we have
$$(C_G)_{n+1} \cdot v = - \sum_{i=1}^{n} (C_G)_i \cdot v  =- \sum_{i=1}^{n} N_i \cdot v =0.$$ 
We conclude once again that $C_G v =0$.}

({\em ii}\,) $\Rightarrow$  ({\em iii}\,). Let $(\calC_1, \calR_1), \ldots, (\calC_J, \calR_J)$ be the connected components of $(\calC, \calR)$, and {denote the canonical basis of $\r^n$ by $\{\cane_1, \ldots, \cane_n\}$}. Let $j \in [J]$. We have two possibilities.
	
	If $0 \notin \calC_j$, then any siphon of $G$ containing some species $S' \in \calC_j$ contains $\calC_j$. Indeed, for any other $S \in \calC_j$, there exists a reaction path connecting $S$ to $S'$. Thus, $S$ belongs to any siphon containing $S'$. Furthermore, 
	\[
		\sum_{i\colon S_i \in \calC_j} \cane_i
	\]
	is a P-semiflow of $G$. This follows from the fact that, for each reaction $S \rightarrow S' \in \calR_j$, the column of $N$ corresponding to $S \rightarrow S'$ has exactly two nonzero entries, namely, a $1$ in the row corresponding to $S'$, and a $-1$ in the row corresponding to $S$.
	
	If $0 \in \calC_j$, then by strong connectedness, {there is a reaction path from $0$ to any species $S \in \calC_j$}. By Example \ref{ex:inflows},  $S$ cannot belong to any siphon of $G$ and thus $\calC_j$ contains no siphons.
			We conclude that every siphon of $G$ contains the support of a P-semiflow.
\end{proof}

The property that every connected component of the reaction graph is strongly connected is also known in the literature as {\em weak reversibility} (see \cite[Definition 6.1]{gunawardena-2003}). Thus, Proposition \ref{prop:monomolecular_equivalence}, as well as other results further down, could well have been stated in these terms. In this work, the property of weak reversibility only comes up in the context of monomolecular networks. Thus, we chose to use the more informative, explicit description in terms of strong connectivity of the connected components.

\begin{example}[Persistence without Conservativity]
The hypothesis that the network is conservative in Pro\-po\-sition \ref{prop:monomolecular_equivalence}  is not superfluous for the full equivalence of the six statements. For example, the reaction network 
$0 \rightarrow A$ 	with, say, mass-action kinetics is persistent but not consistent and the implication	{({\em vii}\,) $\Rightarrow$ ({\em i}\,)}  does not hold.
\mydiamond
\end{example}

\section{Intermediates and Catalysts}\label{sec:operations}

In this section we define the concepts of intermediate and catalyst of a reaction network. We also describe the reaction networks that are obtained from their removal. After establishing these concepts and underlying terminology in Subsections \ref{subsec:intermediates} and \ref{subsec:catalysts}, we state our main results in Subsection \ref{subsec:mainresult}.

\subsection{Intermediates}\label{subsec:intermediates}

Consider a reaction network $G = (\calS, \calC, \calR)$. Let $\calY$ be a nonempty subset of $\calS$, and write
\[
	\calY = \{Y_1, \ldots, Y_p\}\,,
\qquad\text{and}\qquad
	\calS \backslash \calY = \{S_1, \ldots, S_q\}\,.
\]
Consider the following two properties.

\begin{itemize}
	\item[(I1)] Each complex $y \in \calC$ is either of the form $y = \alpha_1S_1 + \cdots + \alpha_qS_q$ for some $\alpha_1, \ldots, \alpha_q \geqslant 0$, or of the form $y = Y_i$ for some $i \in [p]$.
		In particular, we identify the `complexes' and `species' $Y_1, \ldots, Y_p$. (See also Definition \ref{def:monomolecular_definition}.)
	
	\item[(I2)] For each $Y \in \calY$, there exist $y, y' \in \calC \backslash \calY$, and reaction paths  from $y$ to $Y$ and from $Y$ to $y'$ such that all their non-endpoints are in $\calY$.
\end{itemize}

If (I1) and (I2) hold, then we may construct a reaction network $G^* = (\calS^*, \calC^*, \calR^*)$ as follows. We define $\calR^* := \calR^*_c \cup \calR^*_Y$, where $\calR^*_c \subseteq \calR$ is the set of reactions $y \rightarrow y' \in \calR$ such that $y, y' \in \calC \backslash \calY$, and $\calR^*_Y$ is constructed as the set of reactions $y \rightarrow y'$ such that $y, y' \in \calC \backslash \calY$, $y \neq y'$, and there is a reaction path in $G$ connecting $y$ to $y'$ such that all their non-endpoints are in $\calY$. 
We set $\calC^*$ to be the set of reactant and product complexes in the reactions in $\calR^*$, and we set $\calS^*$ to be the set of species that are part of some complex in $\calC^*$. {Note} that $\calS^*$ does not always coincide with $\calS \backslash \calY$, as illustrated in Example \ref{ex:ubiquitin1of3} below. {In the above description, we think of the reactant and product sides of a reaction $y \rightarrow y' \in \calR^*$ as the formal linear combinations of participating species alluded to  before.}

\begin{definition}[Intermediates]\label{def:intermediates}
	Let $G = (\calS, \calC, \calR)$ be a reaction network and $\calY$ be a nonempty subset of $\calS$. We call $\calY$ a {\em set of intermediate species of $G$}, if (I1) and (I2) hold. In this case, the reaction network $G^* = (\calS^*, \calC^*, \calR^*)$ defined as above is called the {\em reduction of $G$ by the removal of the set of intermediates $\calY$}. The elements of $\calY$ are then referred to as the {\em intermediate species of $G$}.
\mytriangle	
\end{definition}

For brevity, we will often write simply intermediates instead of intermediate species.

\begin{example}[A Ubiquitination Model]\label{ex:ubiquitin1of3}
	Consider the following reaction network model for Ring1B/Bmi1 ubiquitination \cite{nguyen--munoz-garcia--maccario--ciechanover--kolch--kholodenko-2011}.

\begin{center}%
	\hfill%
	\begin{minipage}[c]{0.5\textwidth}
		\[
			B \longleftrightarrow B_{ub}^d 
			\quad\quad
			H \longleftrightarrow H_{ub}
		\]
		\vspace{-1ex}
		\[
			B + R \longleftrightarrow Z \longleftrightarrow Z_{ub} \longleftrightarrow B + R_{ub}^a
		\]
	\end{minipage}%
	\hfill\null%
	\begin{minipage}[c]{0.3\textwidth}
		\[
			R_{ub}^a
		\]
		\[
			\downarrow
		\]
		\[
			R_{ub} \longleftrightarrow R \longleftrightarrow R_{ub}^d
		\]
	\end{minipage}%
	\hfill\null%
\end{center}
Note that
	\[
		\calY := \{B_{ub}^d, H, R_{ub}, R_{ub}^d, Z, Z_{ub}\}
	\]
	is a set of intermediate species of the network. This network can be reduced to
	\[
		B + R \longleftrightarrow B + R_{ub}^a
		\quad\quad
		R_{ub}^a \longrightarrow R
	\]
	by removing these intermediates and collapsing the paths in which they appear, as described above.
	
	We emphasize that $\calS^*$ does not always coincide with $\calS \backslash \calY$. In this example, $H_{ub}$ is in $\calS \backslash \calY$, but not in $\calS^*$. We also note that the same network $G^*$ may arise from removing a different set of intermediates. For instance, in this example, we could have set $H_{ub}$ as an intermediate in place of $H$.
\mydiamond
\end{example}

\subsubsection*{Removing One Intermediate At A Time}

In the proofs of some of our results concerning the removal of intermediates, we use induction on the number of intermediates removed. Thus, a discussion of how the intermediates in a set of intermediates may be iteratively removed, one at a time, is warranted.

Let $G = (\calS, \calC, \calR)$ be a reaction network, and suppose $\calY = \{Y_1, \ldots, Y_p\}$ is a set of intermediates of $G$. Set $G_p := G$. It follows directly from the definition that any nonempty subset of $\calY$ is a set of intermediates of $G$. In particular, $\{Y_p\}$ is a set of intermediates of $G_p$. Let $G_{p-1}$ be the reduction of $G_p$ by the removal of the set of intermediates $\{Y_p\}$. Now $\{Y_1, \ldots, Y_{p-1}\}$ is a set of intermediates of $G_{p-1}$. In particular, $\{Y_{p-1}\}$ is a set of intermediates of $G_{p-1}$. We define $G_{p-2}$ to be the reduction of $G_{p-1}$ by the removal of the set of intermediates $\{Y_{p-1}\}$. Iterating this process $p$ times, we obtain a sequence $G_p, \ldots, G_1, G_0$ such that $G_p = G$, and $G_{i-1}$ is the reduction of $G_i$ by the removal of the set of intermediates $\{Y_i\}$, $i = p, \ldots, 1$.

\begin{lemma}\label{lemm:intermediateinduction}
If $G$, $\calY$, and $G_p, \ldots, G_1, G_0$ are like in the above construction, and $G^*$ is the reduction of $G$ by the removal of the set of intermediates $\calY$, then $G_0 = G^*$.
\end{lemma}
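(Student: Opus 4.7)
The plan is to proceed by induction on $p$. The base case $p=1$ is immediate, as both $G_0$ and $G^*$ are defined as the reduction of $G$ by the removal of $\{Y_1\}$.

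For the inductive step, I would establish a splitting lemma asserting that if $\calY = \calY' \sqcup \calY''$, $\calY''$ is a set of intermediates of $G$, and $\calY'$ is a set of intermediates of the reduction of $G$ by $\calY''$, then the reduction of $G$ by $\calY$ coincides with the reduction by $\calY'$ of the reduction by $\calY''$ of $G$. Applied with $\calY'' = \{Y_p\}$ and $\calY' = \{Y_1, \ldots, Y_{p-1}\}$, this combined with the induction hypothesis applied to the network $G_{p-1}$ and its intermediate set $\{Y_1, \ldots, Y_{p-1}\}$ yields $G_0 = G^*$. Along the way one must verify that $\{Y_1, \ldots, Y_{p-1}\}$ is indeed a set of intermediates of $G_{p-1}$: condition (I1) is immediate, and (I2) follows by observing that a reaction path in $G$ of the type required by (I2) meets $Y_p$ at most once (paths have pairwise distinct complexes), so any occurrence of $Y_p$ can be bypassed through the shortcut edges of the reduction by $\{Y_p\}$.

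Since two reaction networks with the same reaction set automatically share the same complex and species sets (both being determined by the reactions), the splitting lemma reduces to checking equality of the reaction sets. One direction is relatively direct: given a reaction path $y \to z_1 \to \cdots \to z_k \to y'$ in $G$ with $y, y' \notin \calY$ and each $z_i \in \calY$, select the subsequence $y = w_0, w_1, \ldots, w_\ell = y'$ consisting of the endpoints together with the $z_i$'s lying in $\calY'$. Between consecutive $w_j$'s all intermediate vertices lie in $\calY''$, so $w_j \to w_{j+1}$ is an edge of the reduction of $G$ by $\calY''$, producing in that reduction a reaction path from $y$ to $y'$ with non-endpoints in $\calY'$; hence the reaction $y \to y'$ survives in the iterated reduction.

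The main obstacle is the converse direction. A reaction path $y \to v_1 \to \cdots \to v_{m-1} \to y'$ in the reduction of $G$ by $\calY''$ with each $v_j \in \calY'$ can be lifted by replacing each edge with either a direct edge in $G$ or a subpath through $\calY''$, yielding a walk in $G$ from $y$ to $y'$ whose internal vertices lie in $\calY$. The $v_j$'s remain pairwise distinct and are disjoint from the inserted $\calY''$-vertices (by $\calY' \cap \calY'' = \varnothing$), but different lifted segments may share a vertex from $\calY''$, so one obtains a walk rather than a reaction path. The standard walk-to-path reduction---iteratively deleting any detour between two occurrences of the same vertex---produces a genuine reaction path in $G$ from $y$ to $y'$ with non-endpoints still inside $\calY$, whence $y \to y' \in \calR^*$. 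This completes the splitting lemma and with it the induction.
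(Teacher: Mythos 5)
Your proof is correct and follows essentially the same route as the paper's: induction on $p$, reduced to a splitting statement that identifies the one-shot reduction with a two-stage reduction, verified by translating reaction paths between $G$ and the partially reduced network. The only differences are cosmetic --- you peel off $\{Y_p\}$ first and apply the induction hypothesis to $G_{p-1}$, whereas the paper peels off $\{Y_1\}$ last and applies the hypothesis to $\{Y_2,\ldots,Y_{p+1}\}$ inside $G$; and in the converse inclusion you are in fact slightly more careful than the paper, which concatenates the two paths through $Y_1$ without remarking that the result may be a walk needing to be pruned back to a genuine reaction path.
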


\begin{proof}
	We use induction on $p$. The claim is trivial for $p = 1$. So, suppose it has been proven to be true for the removal of up to $p$ intermediates, for some $p \geqslant 1$. Let $\calY = \{Y_1, \ldots, Y_p, Y_{p+1}\}$ be a set of intermediates of $G$. As noted above, $\{Y_2, \ldots, Y_{p+1}\}$ is a set of intermediates of $G$. Let $G_1^*$ be the reduction of $G$ obtained by their removal. By the induction hypothesis, $G_1^* = G_1$, and so $\calR^*_1 = \calR_1$. We want to show that $\calR_0 = \calR^*$.
	
	$\calR^* \subseteq \calR_0$. Let $y \rightarrow y'$ be any reaction in $\calR^*$. If $y \rightarrow y' \in \calR$, then $y \rightarrow y' \in \calR^*_1$, and so $y \rightarrow y' \in \calR_0$. So, suppose $y \rightarrow y' \notin \calR$. Then there exist $Y^{(1)}, \ldots Y^{(\ell)} \in \calY$ such that
	\[
		y \longrightarrow Y^{(1)} \longrightarrow \cdots \longrightarrow Y^{(\ell)} \longrightarrow y'
	\]
	is a reaction path in $G$. 
	If $Y^{(1)}, \ldots Y^{(\ell)} \in \{Y_2, \ldots, Y_{p+1}\}$, then $y \rightarrow y' \in \calR_1^*$, and so $y \rightarrow y' \in \calR_0$ like in the previous case. Otherwise, we have $Y_1 = Y^{(i)}$ for some $i \in [\ell]$. But now
	\begin{equation}\label{eq:y-Y_1-yprime}
		y \longrightarrow Y_1 \longrightarrow y'
	\end{equation}
	is a reaction path in $G_1^*$, and so $y \rightarrow y' \in \calR_0$ once again.
	
	$\calR_0 \subseteq \calR^*$. Let $y \rightarrow y'$ be any reaction in $\calR_0$. If $y \rightarrow y' \in \calR_1^*$, then there exists a reaction path connecting $y$ to $y'$ in $G$ such that all its non-endpoints belong to $\{Y_2, \ldots, Y_{p+1}\}$. In this case, $y \rightarrow y' \in \calR^*$. If $y \rightarrow y' \notin \calR_1^*$, then (\ref{eq:y-Y_1-yprime}) is a reaction path in $G_1^*$. In this case there are reaction paths in $G$ connecting $y$ to $Y_1$ and $Y_1$ to $y'$, all non-endpoints of which belong to $\{Y_2, \ldots, Y_{p+1}\}$. Concatenating these two reaction paths we obtain a reaction path in $G$ connecting $y$ and $y'$ such that all its non-endpoints belong to $\calY$. It follows once again that $y \rightarrow y' \in \calR^*$.
\end{proof}

\subsection{Catalysts}\label{subsec:catalysts}

Consider a reaction network $G = (\calS, \calC, \calR)$. Let $\calE$ be a nonempty subset of $\calS$, and write
\[
	\calE = \{E_1, \ldots, E_p\}\,, \qquad \text{and}\qquad 
	\calS \backslash \calE = \{S_1, \ldots, S_q\}\,.
\]
Consider the following two properties.
\begin{itemize}
	\item[(C1)] For each reaction
		\[
			\sum_{i = 1}^{q} \alpha_iS_i + \sum_{i = 1}^{p} \beta_iE_{i} \longrightarrow \sum_{i = 1}^{q} \alpha'_iS_i + \sum_{i = 1}^{p} \beta'_iE_{i}
		\]
		in $\calR$, we have
		\[
			\sum_{i = 1}^{p} \beta_iE_{i} = \sum_{i = 1}^{p} \beta'_iE_{i}
		\qquad \text{or}\qquad
			\alpha_1 = \alpha'_1 = \cdots = \alpha_q = \alpha'_q = 0\,.
		\]
		
	\item[(C2)] The subnetwork $G_\calE = (\calS_\calE, \calC_\calE, \calR_\calE)$ implied by $\calE$ (refer to Definition \ref{def:implied_network}) has no drainable or self-replicable siphons (equivalently, has the siphon/P-semiflow property).
\end{itemize}
If (C1) and (C2) hold, then we may construct a reaction network $G^* = (\calS^*, \calC^*, \calR^*)$ as follows. We set $\calR^*$ to be the set of reactions
\[
	\sum_{i = 1}^{q} \alpha_iS_i \longrightarrow \sum_{i = 1}^{q} \alpha'_iS_i
\]
such that
\[
	\sum_{i = 1}^{q} \alpha_iS_i + \sum_{i = 1}^{p} \beta_iE_{i} \longrightarrow \sum_{i = 1}^{q} \alpha'_iS_i + \sum_{i = 1}^{p} \beta'_iE_{i}
\]
belongs to $\calR$, and $\alpha_{i_0} > 0$ or $\alpha'_{i_0} > 0$ for some $i_0 \in [q]$. We then set $\calC^*$ to be the set of reactants and products in these reactions, and set $\calS^*$ to be the set of species that are part of some complex in $\calC^*$. Contrary to what happened with intermediates, $\calS^*$ always agrees with $\calS \backslash \calE$.

\begin{definition}[Catalysts]\label{def:catalysts}
	Let $G = (\calS, \calC, \calR)$ be a reaction network and $\calE$ be a nonempty subset of $\calS$. We call $\calE$ a {\em set of catalysts of $G$} if (C1) and (C2) hold. In this case, the reaction network $G^* = (\calS^*, \calC^*, \calR^*)$ defined as above is called a {\em reduction of $G$ by the removal of the set of catalysts $\calE$}. The elements of $\calE$ are then referred to as the {\em catalysts of $G$}.
\mytriangle	
\end{definition}

Typically (C2) is checked via Proposition \ref{prop:monomolecular_equivalence}, by showing that $G_\calE$ is a mon\-o\-mo\-lec\-u\-lar network and each connected component of its reaction graph is strongly connected, as we shall see in some of the examples in {the next section}. However, the theory allows for catalysts to interact in more complex, yet still biologically meaningful ways, for instance, in reversible reactions of the forms
	\[
		E_1 + E_2 \longleftrightarrow 2E_3\,,
		\quad\quad
		E_1 + E_2 \longleftrightarrow E_3 + E_4\,,
		\quad
		\text{or}
		\quad
		2E_1 \longleftrightarrow E_2\,.
	\]

\begin{example}[A Ubiquitination Model (Continued)]\label{ex:ubiquitin2of3}
	Consider the network 
	\[
		B + R \longleftrightarrow B + R_{ub}^a
		\quad\quad
		R_{ub}^a \longrightarrow R\,,
	\]
	obtained from the ubiquitination model in Example \ref{ex:ubiquitin1of3} after intermediates were removed. Note that $\calE := \{B\}$	is a set of catalysts. Thus, this network can be further reduced to
	\[
		R \longleftrightarrow R_{ub}^a
	\]
	by removing $B$ and projecting the reactions as described above.
	
	Note that $B$ is not a catalyst of the original ubiquitination model in Example \ref{ex:ubiquitin1of3}. In realistic biochemical models, it is often the case that catalysts in the sense of Definition \ref{def:catalysts} only emerge after some intermediates are removed.
\mydiamond	
\end{example}

\subsection{Main Results}\label{subsec:mainresult}

We are now ready to precisely restate Theorems \ref{thm:informal_main}, \ref{thm:informal_main2} and \ref{thm:informal_main3} in the introduction, and consider a few examples. The proofs will be given in Section \ref{sec:proofofmainresult}.

\theoremstyle{plain}
\newtheorem{realtheorem}{Theorem}

\begin{realtheorem}[Removal of Intermediates]\label{thm:main}
	Suppose a reaction network $G^*$ is obtained from a reaction network $G$ by the removal of a set of intermediates. Then, 
		\begin{itemize}
			\item[{\em(}i\,{\em)}] $G$ has no drainable (respectively, self-replicable) siphons if, and only if $G^*$ has no drainable (respectively, self-replicable) siphons;
			\item[{{\em(}ii\,{\em)}}] $G$ has the siphon/P-semiflow property if, and only if $G^*$ has the siphon/P-semiflow property;
			\item[{{\em(}iii\,{\em)}}] $G$ is consistent if, and only if $G^*$ is consistent; and
			\item[{{\em(}iv\,{\em)}}] if $G$ is conservative, then $G^*$ is conservative; conversely, if $G^*$ is conservative and $0 \notin \calC$, then $G$ is also conservative.
	\end{itemize} 
\end{realtheorem}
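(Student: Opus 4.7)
My plan begins by invoking Lemma \ref{lemm:intermediateinduction} to reduce all four assertions to the case $\calY = \{Y\}$ of a single intermediate, and then induct on $|\calY|$. In this base case, each reaction of $\calR^*$ is either a reaction of $\calR$ between two non-intermediate complexes, or arises from a length-two path $y \to Y \to y'$ in $\calR$ (with $y, y' \in \calC \setminus \{Y\}$ and $y \neq y'$). The key preliminary is a siphon correspondence: if $\Sigma$ is a siphon in $G$, then $\Sigma \cap \calS^*$ is a siphon in $G^*$ (applying Remark \ref{rk:siphonpath} to the composite reactions); conversely, given a siphon $\Sigma^*$ in $G^*$, extend it to a siphon $\Sigma$ in $G$ by adjoining $Y$ whenever some $Y \to y'$ in $\calR$ has $y'$ sharing a species with $\Sigma^*$, and then iteratively adjoining any further species of $\calS \setminus (\calS^* \cup \{Y\})$ forced by the siphon condition on the reactions that involve $Y$. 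A short verification shows that $\Sigma \cap \calS^* = \Sigma^*$ in both directions.

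For part (i), I transfer drainable (respectively, self-replicable) reaction sequences. Given $v^* \in \rplus^{|\calR^*|}$ draining $\Sigma^*$ in $G^*$, lift to $G$ by keeping $v^*_R$ on each $R \in \calR \cap \calR^*$ and splitting each composite reaction $y \to y'$ of multiplicity $v^*_{y \to y'}$ into $y \to Y$ and $Y \to y'$ at the same multiplicity. The net effect on any non-intermediate species matches the $G^*$-effect, while the net effect on $Y$ is zero; if the lifted siphon contains $Y$, rescale and add a small multiple of an outgoing reaction $Y \to y'$ (which exists by (I2)) to drain $Y$ without spoiling negativity on $\Sigma^*$. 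Conversely, given a draining sequence $v$ in $G$, set $v^*_R := v_R$ on $R \in \calR \cap \calR^*$ and solve a transportation-style system for the composite multiplicities with row sums equal to $v_{y \to Y}$ and column sums at most $v_{Y \to y'}$ (feasible because draining on $Y$ forces $\sum_y v_{y \to Y} \leq \sum_{y'} v_{Y \to y'}$). The $G^*$-sum on each $\Sigma^*$-coordinate then differs from the $G$-sum by a coordinatewise non-negative term, so remains strictly negative. The self-replicable case is symmetric (with the inequality reversed). Part (ii) then follows from part (i) together with Corollary \ref{cor:equivcrit}.

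For parts (iii) and (iv), I transfer T- and P-semiflows. A T-semiflow $v$ of $G$ projects to a T-semiflow $v^*$ of $G^*$ via the same transportation construction, now with exact marginals because the $Y$-balance gives $\sum_y v_{y \to Y} = \sum_{y'} v_{Y \to y'}$; conversely, $v^*$ lifts by $v_{y \to Y} := \sum_{y'} v^*_{y \to y'}$ and $v_{Y \to y'} := \sum_y v^*_{y \to y'}$, and full support is preserved using (I2). For P-semiflows, the restriction $\omega \mapsto \omega|_{\calS^*}$ satisfies the $G^*$-balance equations by summing the $G$-balance equations along each composite path; the extension is defined by $\omega(Y) := \omega^* \cdot y$ for any $y$ with $y \to Y \in \calR$, which is well-defined because the $G^*$-equation on each composite reaction forces $\omega^* \cdot y$ to be constant across all incoming $y$ and equal to $\omega^* \cdot y'$ across all outgoing $y'$. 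For part (iv), the hypothesis $0 \notin \calC$ guarantees each such $y$ is a nonzero complex, so $\omega^* \gg 0$ yields $\omega(Y) > 0$; species of $\calS \setminus (\calS^* \cup \{Y\})$, which can only appear in dangling subnetworks attached to $Y$, admit freely chosen positive values compatible with the local P-semiflow constraints, delivering a strictly positive P-semiflow on $\calS$. I expect the main technical obstacle to be this projection step in part (i), since the $G$-sequence's marginals at $Y$ are not exactly balanced in the drainable or self-replicable case; the transportation argument absorbs the discrepancy into the slack between row and column marginals, and the resulting slack only improves the sign of the $G^*$-sum on $\Sigma^*$.
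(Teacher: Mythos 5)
Your overall architecture --- reduce to a single intermediate via Lemma \ref{lemm:intermediateinduction}, set up a siphon correspondence, and transfer reaction sequences and semiflows by splitting or merging the length-two paths through $Y$ --- is the same as the paper's, and most of the individual steps are sound. There is, however, a genuine gap in the forward direction of part (i). You justify the feasibility of your transportation system by asserting that ``draining on $Y$ forces $\sum_y v_{y \to Y} \leqslant \sum_{y'} v_{Y \to y'}$.'' That inequality is only forced when $Y$ belongs to the drainable siphon $\Sigma$; Definition \ref{def:d_or_sr} imposes no sign condition on species outside $\Sigma$. If $Y \notin \Sigma$ the incoming mass can strictly exceed the outgoing mass (for instance $\Sigma = \{A\}$ drained by the single reaction $A \rightarrow Y$, with no reaction $Y \rightarrow y'$ used at all), and your system with column sums at most $v_{Y\rightarrow y'}$ is then infeasible. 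The missing observation --- which is precisely the second case in the paper's proof --- is that when $Y \notin \Sigma$ the siphon property forces every product $y'$ of every reaction $Y \rightarrow y'$ to have support disjoint from $\Sigma$; one may therefore route all of the incoming mass through a single arbitrary outgoing reaction $Y \rightarrow y'$ (which exists by (I2)), used as many extra times as needed, without perturbing any $\Sigma^*$-coordinate. Without this case split the argument does not go through.

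A smaller defect of the same flavor occurs in your lift of T-semiflows in part (iii): defining $v_{y \rightarrow Y} := \sum_{y'} v^*_{y \rightarrow y'}$ gives $v_{y\rightarrow Y} = 0$ whenever the only reactions out of $Y$ lead back to $y$ itself, because then $y$ is the source of no composite reaction in $\calR^*$ (self-loops are excluded from reaction graphs), so ``full support is preserved using (I2)'' fails in that case. The paper repairs this by adding one extra unit of flow around each two-cycle $y \rightarrow Y \rightarrow y$, a null cycle that does not disturb the balance equations. Both gaps are repairable, but each requires an idea not present in your write-up.
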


\begin{realtheorem}[Removal of Catalysts]\label{thm:main2}
	Suppose a reaction network $G^*$ is obtained from a reaction network $G$ by the removal of a set of catalysts $\calE$. Then, 
		\begin{itemize}
			\item[{\em(}i\,{\em)}] $G$ has no drainable (respectively, self-replicable) siphons if, and only if $G^*$ has no drainable (respectively, self-replicable) siphons;
			\item[{{\em(}ii\,{\em)}}] $G$ has the siphon/P-semiflow property if, and only if $G^*$ has the siphon/P-semiflow property;
			\item[{{\em(}iii\,{\em)}}] if $G$ is consistent, then $G^*$ is consistent; conversely, if $G^*$ is consistent and $G_\calE$ is conservative, then $G$ is consistent; and
			\item[{{\em(}iv\,{\em)}}] if $G$ is conservative, then $G^*$ is conservative; conversely, if $G^*$ is conservative and $G_\calE$ is conservative, then $G$ is conservative.
		\end{itemize} 
\end{realtheorem}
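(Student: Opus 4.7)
The plan is to exploit a structural dichotomy forced by (C1): every reaction in $\calR$ is either of \emph{Type A}, meaning the multiset of catalysts on either side of the arrow is identical and the stoichiometric column only affects non-catalyst coordinates, or of \emph{Type B}, meaning no non-catalysts appear at all and the reaction belongs to $\calR_\calE$. The reactions of $G^*$ are precisely the non-trivial images of the Type A reactions after erasing the catalyst part, and the Type B reactions coincide exactly with $\calR_\calE$. Consequently, the non-catalyst rows of the stoichiometric matrix $N$ of $G$ decompose (up to aggregation of Type A reactions with the same projection) as $N^*$ on Type A reactions and zero on Type B reactions, while the catalyst rows decompose as zero on Type A reactions and as $N_\calE$ on Type B reactions.

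For part (i), the easy direction is lifting: any drainable (or self-replicable) siphon $\Sigma^* \subseteq \calS^*$ of $G^*$ is already such a siphon of $G$, because every reaction of $\calR$ with a product in $\Sigma^* \subseteq \calS^*$ must be of Type A (Type B reactions only produce catalysts) and hence has the same reactants (modulo catalysts) as its projection, while any witnessing sequence in $G^*$ lifts by picking any Type A preimage for each reaction. For the converse, let $\Sigma$ be drainable in $G$ with witnessing sequence $y_1 \rightarrow y_1', \ldots, y_k \rightarrow y_k'$. Separating this sequence into its Type A and Type B parts and inspecting coordinates, the catalyst coordinates in $\Sigma \cap \calE$ are affected only by Type B reactions, so the Type B subsequence certifies that $\Sigma \cap \calE$, if nonempty, would be a drainable siphon of $G_\calE$ (after checking it is a siphon of $G_\calE$, which follows from $\calC_\calE \subseteq \calE$); by (C2) this forces $\Sigma \cap \calE = \varnothing$. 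The Type A subsequence then projects to a sequence of reactions of $G^*$ that drains $\Sigma \subseteq \calS^*$, and $\Sigma$ is a siphon of $G^*$ because any product in $\Sigma$ of a reaction in $\calR^*$ lifts to a Type A reaction whose forced reactant in $\Sigma$ must be a non-catalyst and hence survives the projection. The self-replicable case is symmetric, and part (ii) follows from part (i) via Corollary~\ref{cor:equivcrit}.

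Parts (iii) and (iv) rest on the same two-block decomposition, applied now to semiflows. For (iv) forward, the restriction of a strictly positive P-semiflow $\omega$ of $G$ to $\calS^*$ is a strictly positive P-semiflow of $G^*$, since Type A reactions have vanishing catalyst stoichiometry and their non-catalyst stoichiometry matches the corresponding reaction of $\calR^*$. The converse pastes together a strictly positive P-semiflow $\omega^*$ of $G^*$ and a strictly positive P-semiflow $\omega_\calE$ of $G_\calE$, filling in arbitrary strictly positive values on the stoichiometrically inert coordinates $\calE \setminus \calS_\calE$, and $\omega^T N = 0$ is verified reaction-by-reaction. Part (iii) forward aggregates $v$ over all Type A lifts of each reaction of $\calR^*$ to produce $v^* \gg 0$ with $N^* v^* = 0$. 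For the converse, one invokes Proposition~\ref{prop:persistent2bounded_persistent} to upgrade ``$G_\calE$ conservative $+$ siphon/P-semiflow property'' (the latter from (C2) via Corollary~\ref{cor:equivcrit}) to ``$G_\calE$ consistent,'' and then combines a strictly positive T-semiflow $v_\calE$ of $G_\calE$ with any strictly positive distribution of $v^*$ over the Type A lifts.

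The main obstacle I anticipate is the converse of part (i): ruling out drainable siphons of $G$ that intersect $\calE$. The crucial input, and precisely the reason (C2) appears in the definition of catalysts, is that otherwise the catalyst coordinates could house a drainability (or self-replicability) witness that the reduction $G^*$ cannot see; (C2) forbids exactly this, making the projection faithful.
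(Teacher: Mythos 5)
Your proposal is correct and follows essentially the same route as the paper: the same Type A/Type B block decomposition of the stoichiometric matrix, the same lifting/projection of drainability witnesses, and the same use of (C2) together with conservativity of $G_\calE$ (via Proposition~\ref{prop:persistent2bounded_persistent}) to obtain consistency of $G_\calE$ in the converse of (iii). The only organizational difference is in the converse of (i), where the paper first reduces to minimal siphons via a trichotomy lemma, while you handle an arbitrary drainable siphon $\Sigma$ directly; your version is fine provided you note explicitly that species in $\calE \setminus \calS_\calE$ have identically zero rows of $N$ and hence cannot lie in a drainable or self-replicable set, so that what (C2) actually excludes is the siphon $\Sigma \cap \calS_\calE$ of $G_\calE$.
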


{Combining Theorems~\ref{thm:main} and \ref{thm:main2} with Proposition \ref{prop:monomolecular_equivalence} 
we obtain the following corollary.}

\begin{corollary}\label{cor:intcat}
Suppose a monomolecular reaction network $G^*$ is obtained by iteratively removing sets of intermediates and catalysts from a reaction network $G$. If each of the connected components of $G^*$ is strongly connected, then $G$ is bounded-persistent and has no boundary steady states in any stoichiometric compatibility class that is not already contained in the boundary of the positive orthant. Furthermore, if $G$ is conservative, then $G$ is persistent and consistent.  
\end{corollary}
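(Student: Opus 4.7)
The plan is to string together three results already in the excerpt: Proposition~\ref{prop:monomolecular_equivalence} to extract structural information from the hypothesis on $G^*$, Theorems~\ref{thm:main} and \ref{thm:main2} to transport that information back along the reduction sequence to $G$, and finally Proposition~\ref{prop:persistent2bounded_persistent} (together with Corollary~\ref{cor:equivcrit}) to convert the structural information into the dynamical conclusions.

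First, since $G^*$ is monomolecular and each of its connected components is strongly connected, implication (\emph{ii}\,) $\Rightarrow$ (\emph{iii}\,) of Proposition~\ref{prop:monomolecular_equivalence} gives that $G^*$ has the siphon/P-semiflow property. Next, by assumption there is a finite sequence of reductions $G = G_0, G_1, \ldots, G_k = G^*$, where each $G_{i+1}$ is obtained from $G_i$ by removing either a set of intermediates or a set of catalysts. At each such step, parts (\emph{ii}\,) of Theorem~\ref{thm:main} and Theorem~\ref{thm:main2} are ``if and only if'' statements, so the siphon/P-semiflow property propagates backwards through the chain: since $G_k$ has it, so does $G_{k-1}$, and so on down to $G_0 = G$. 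Consequently $G$ itself has the siphon/P-semiflow property.

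From here, the dynamical conclusions follow directly. By Corollary~\ref{cor:equivcrit}, the siphon/P-semiflow property for $G$ implies $G$ has no drainable siphons, so by Proposition~\ref{prop:persistent2bounded_persistent}(\emph{iv}\,) the network $G$ is bounded-persistent. By Proposition~\ref{prop:persistent2bounded_persistent}(\emph{v}\,), the same property also rules out boundary steady states in any stoichiometric compatibility class that is not entirely contained in $\partial \rplus^n$. If in addition $G$ is conservative, then Proposition~\ref{prop:persistent2bounded_persistent}(\emph{ii}\,) upgrades bounded-persistence to full persistence, and Proposition~\ref{prop:persistent2bounded_persistent}(\emph{iii}\,) upgrades conservativity plus persistence to consistency.

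I do not anticipate a genuine obstacle here: the corollary is essentially a bookkeeping assembly of the preceding results. The only point that requires a moment of care is the iterative application in the second paragraph above, namely making sure we invoke the biconditional halves of Theorems~\ref{thm:main}(\emph{ii}\,) and \ref{thm:main2}(\emph{ii}\,) and not the forward direction of (\emph{iv}\,), which in Theorem~\ref{thm:main2} requires an extra conservativity hypothesis on $G_\calE$ that we do not wish to impose. Since we are transporting the siphon/P-semiflow property rather than conservativity from $G^*$ back to $G$, this subtlety does not arise, and the chain of implications goes through cleanly.
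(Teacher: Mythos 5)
Your proof is correct and is exactly the argument the paper intends: the corollary is stated as an immediate consequence of "combining Theorems~\ref{thm:main} and \ref{thm:main2} with Proposition~\ref{prop:monomolecular_equivalence}," and your chain --- Proposition~\ref{prop:monomolecular_equivalence}\,(\emph{ii}\,)$\Rightarrow$(\emph{iii}\,) on $G^*$, backward propagation via the biconditionals in Theorems~\ref{thm:main}(\emph{ii}\,) and \ref{thm:main2}(\emph{ii}\,), then Corollary~\ref{cor:equivcrit} and Proposition~\ref{prop:persistent2bounded_persistent}(\emph{ii}\,)--(\emph{v}\,) --- is that combination spelled out. Your remark about steering clear of the conservativity side-hypothesis in Theorem~\ref{thm:main2}(\emph{iv}\,) is a correct and worthwhile point of care, though it indeed never arises since only the (\emph{ii}\,) parts are needed.
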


\newcommand{\primitive}{primitive}
\newcommand{\Primitive}{Primitive}

\begin{definition}[Primitive Networks]
	A reaction network $G = (\calS, \calC, \calR)$ is said to be {\em \primitive} ({\em with respect to the removal of catalysts or intermediates}\,) if no subset of $\calS$ is a set of catalysts or intermediates of $G$. If iteratively removing sets of intermediates and catalysts of a reaction network $G$ results in a \primitive\ reaction network $G^*$, then we refer to $G^*$ as a {\em \primitive\ reduction of $G$}.
	\mytriangle
\end{definition}

\begin{realtheorem}[Uniqueness of The \Primitive\ Reduction]\label{thm:primitive}
	Let $G$ be a reaction network, and suppose $G^*_1$ and $G^*_2$ are \primitive\ reductions of $G$. Then $G^*_1 = G^*_2$.
\end{realtheorem}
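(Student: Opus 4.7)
The plan is to prove Theorem~\ref{thm:primitive} via a confluence argument. Every single reduction step (removal of a set of intermediates or of a set of catalysts) strictly decreases $|\calS|$, so any reduction sequence terminates in at most $|\calS|$ steps. By Newman's Lemma, it therefore suffices to establish \emph{local confluence}: if a network $G$ reduces in one step to $G_1$ and in one step to $G_2$, then $G_1$ and $G_2$ share a common reduct. Primitive networks are exactly the normal forms of this rewriting system, so termination plus local confluence forces uniqueness of the primitive reduction.

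I would split the local-confluence argument into three cases according to the types of the two one-step reductions. In the \emph{intermediate-intermediate} case, let $\calY_1,\calY_2$ be intermediate sets of $G$. I would first reduce by $\calY_1 \cap \calY_2$, which is itself an intermediate set of $G$ (any nonempty subset of an intermediate set is one, by shortening the path witnessing (I2), a fact already used in the proof of Lemma~\ref{lemm:intermediateinduction}). Then I would verify that $\calY_1\setminus\calY_2$ and $\calY_2\setminus\calY_1$ are intermediate sets of the resulting network: (I1) transfers trivially, since these species never appeared in multi-species complexes, and (I2) transfers by splicing the original witnessing paths, replacing chains of $\calY_1\cap\calY_2$-intermediates by the single reactions produced by the first reduction (and shortening the result to land in $\calC\setminus(\calY_1\cup\calY_2)$ if needed). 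Lemma~\ref{lemm:intermediateinduction} then supplies the common reduct. The \emph{catalyst-catalyst} case is analogous, with (C1), (C2) and Theorem~\ref{thm:main2}({\em ii}\,) playing the role of (I1), (I2) and Lemma~\ref{lemm:intermediateinduction}, applied to the implied subnetworks.

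The most delicate case is the \emph{intermediate-catalyst} one, with $\calY$ intermediates and $\calE$ catalysts of $G$. These sets may overlap, because an intermediate species can sit in a catalysts-only cycle; I would first remove $\calY\cap\calE$ (verified to be simultaneously an intermediate set and a catalyst set, the latter by restricting $G_\calE$ to the corresponding subnetwork and invoking Theorem~\ref{thm:main2}({\em ii}\,)) and then reduce to the disjoint situation. With $\calY\cap\calE=\varnothing$, I would check that $\calY$ is still an intermediate set of $G/\calE$: intermediate species have no catalyst portion (by (I1) and disjointness), so catalyst projection leaves the reactions witnessing (I2) intact and (I1) is trivially inherited. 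Conversely, $\calE$ is still a catalyst set of $G/\calY$: condition (C1) is preserved because a reaction path through $\calY$-intermediates forces the catalyst portions at its endpoints to vanish (apply (C1) to each step of the path and use that intermediates carry no catalyst portion), and the implied subnetwork $(G/\calY)_\calE$ coincides with $G_\calE$, since no catalysts-only reaction involves an intermediate, so (C2) is preserved.

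The main obstacle is this bookkeeping around partial reductions — showing that reaction paths can be suitably spliced and shortened when verifying (I2), and that the implied subnetwork of catalysts is unaffected (as a reaction network) by intermediate removal when verifying (C2) — rather than any single clever argument. Once the three local-confluence cases are established, Newman's Lemma together with termination delivers Theorem~\ref{thm:primitive}.
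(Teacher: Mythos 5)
Your proposal is essentially the paper's own argument: the paper proves Theorem~\ref{thm:primitive} by induction on the number of species (your termination measure) combined with exactly the diamond property you call local confluence, established through the same case analysis — Lemma~\ref{lemm:primitive1and2of3} joins two same-type removals at a common reduct and Lemma~\ref{lemm:primitive3of3} commutes an intermediate removal with a catalyst removal. The only differences are cosmetic: the paper reaches the common reduct for two same-type removals via the union $\calA\cup\calB$ rather than via the intersection and the set differences, and it does not explicitly discuss the possible overlap $\calY\cap\calE$ that you treat separately, so your route and the paper's coincide.
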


Observe that Theorem \ref{thm:primitive} is more than just a theoretical curiosity. As noted in Example \ref{ex:ubiquitin1of3}, choosing a set of intermediates or catalysts to remove is not something that can always be done in a unique way at each stage of the simplification process. Thus, knowing that one would always obtain the same minimally simplified reaction network regardless of the order in which catalysts and intermediates are removed has also practical relevance.

\begin{example}[A Ubiquitination Model (Concluded)]\label{ex:ubiquitin3of3}
	The network
	\[
		R \longleftrightarrow R_{ub}^a
	\]
	is a strongly connected monomolecular network. 
	{By Corollary~\ref{cor:intcat}, so long as the reaction rates of  the ubiquitination model from Example \ref{ex:ubiquitin1of3} satisfy our hypotheses, we conclude that the network is persistent.}
\mydiamond
\end{example}

We emphasize that the procedures of removal of intermediates and catalysts carried out in Examples \ref{ex:ubiquitin1of3} and \ref{ex:ubiquitin2of3}, as well as the analysis of the emerging underlying substrate network for strong connectedness in Example \ref{ex:ubiquitin3of3}, are essentially graphical. More specifically, one need not do any calculations with the stoichiometric matrix {or the reaction rates}.

In Theorem \ref{thm:main2}({\em iii}\,), the hypothesis that $G_\calE$ be conservative is not superfluous. If that is not the case, then it might happen that $G^*$ is consistent and $G$ is not, as shown in Example \ref{ex:nonconservativeGE} below. However, if $G$ is consistent, then $G^*$ is consistent regardless of whether $G_\calE$ is conservative or not, as shown {later} in Lemma \ref{lemm:consistencypreserved}.

\begin{example}[Non-Conservative $G_\calE$]\label{ex:nonconservativeGE}
Consider the reaction network
	\[
		G:
		\quad
		A + E \longleftrightarrow B + E
		\quad\quad
		0 \longrightarrow E\,.
	\]
	The singleton $\calE := \{E\}$ is a set of catalysts of $G$, the removal of which yields the reaction network
	\[
		G^*:
		\quad
		A \longleftrightarrow B\,.
	\]
	{By Proposition \ref{prop:monomolecular_equivalence}, $G^*$ is consistent. } 	The stoichiometric matrix of $G$ is
	\[
		N = 
		\left[
			\begin{array}{rrr}
				-1 & 1 & 0 \cr
				1 & -1 & 0 \cr
				0 & 0 & 1
			\end{array}
		\right]\,.
	\]
	Any T-semiflow of $G$ must have its third coordinate equal to zero, so $G$ is not consistent.
\mydiamond	
\end{example}

\section{Examples}\label{sec:examples}

We shall apply Theorems \ref{thm:main} and \ref{thm:main2} to two main classes of reaction networks. In Subsection \ref{subsec:ptm}, we give necessary and sufficient conditions for cascades of a class of post-translational modification (PTM) systems to be persistent. The reaction network {\eqref{eq:prototype}} in the introduction, as well as the ubiquitination model discussed in Examples \ref{ex:ubiquitin1of3}, \ref{ex:ubiquitin2of3} and \ref{ex:ubiquitin3of3}, will turn out to be special cases of PTM systems. In Subsection \ref{subsec:dissipative}, we argue that a nonconservative reaction network may still be shown to be persistent when it has no drainable siphons as long as it can be {also} shown to be dissipative. Finally, in Subsection \ref{subsec:bss}, {we apply our results to a model of Wnt signaling that focuses on shuttling and degradation  \cite{shiu--sturmfels-2010}.}

\subsection{Cascades of PTM Systems}\label{subsec:ptm}

In this subsection, we study the persistence of {cascades of} a class {of} PTM systems. Combining Theorems \ref{thm:main} and \ref{thm:main2} with {Propositions~\ref{prop:persistent2bounded_persistent},  \ref{prop:monomolecular_equivalence} and Corollary \ref{cor:equivcrit}, }
we will achieve necessary and sufficient conditions for persistence of cascades of PTM systems in terms of strong connectedness of the connected components of the underlying substrate network of each layer.

\subsubsection{PTM Systems}

\newcommand{\Enz}{{\rm Enz}}
\newcommand{\Sub}{{\rm Sub}}
\newcommand{\Int}{{\rm Int}}

Consider a reaction network $G = (\calS, \calC, \calR)$. Let
\[
	\calS = \Enz \cup \Sub \cup \Int
\]
be a partition of the species set. Thus, $\Enz$, $\Sub$, and $\Int$ are pairwise disjoint. Consider the following properties.
\begin{itemize}	
	\item[(M1)] The reactions set $\calR$ can be partitioned into a disjoint union of subsets
		\[
			\calR = \calR_{\Sub} \sqcup \calR_{\Sub + \Enz} \sqcup \calR_{\rightarrow \Int} \sqcup \calR_{\Int \rightarrow} \sqcup \calR_{\Int}\,,
		\]
		which are uniquely determined from the partition $\calS = \Enz \cup \Sub \cup \Int$ by the inclusions
		\[
			\begin{aligned}
				\calR_{\Sub} &\subseteq \{S \rightarrow S'\,|\ S, S' \in \Sub\}\,, \\[1ex]
				\calR_{\Sub + \Enz} &\subseteq \{S + E \rightarrow S' + E\,|\ E \in \Enz\,,\ \text{and}\ S, S' \in \Sub\}\,, \\[1ex]
				\calR_{\rightarrow \Int} &\subseteq \{S + E \rightarrow Y'\,|\ E \in \Enz\,,\ S \in \Sub\,,\ \text{and}\ Y' \in \Int\}\,, \\[1ex]
				\calR_{\Int \rightarrow} &\subseteq \{Y \rightarrow S' + E\,|\ E \in \Enz\,,\ S' \in \Sub\,,\ \text{and}\ Y \in \Int\}\,, \\[1ex]
				\calR_{\Int} &\subseteq \{Y \rightarrow Y'\,|\ Y, Y' \in \Int\}\,.
			\end{aligned}
		\]
		
	\item[(M2)] $\Int$ is either empty or a set of intermediates of $G$.
	
	\item[(M3)] If
		\[
			S + E \longrightarrow Y^{(1)} \longrightarrow \cdots \longrightarrow Y^{(\ell)} \longrightarrow S' + E'
		\]
		is a reaction path in $G$ for some $E, E' \in \Enz$, some $S, S' \in \Sub$, and some $Y^{(1)}, \ldots, Y^{(\ell)} \in \Int$, then $E = E'$.
\end{itemize}

\begin{definition}[PTM Systems]
	Let $G = (\calS, \calC, \calR)$ be a reaction network, and let
	\[
		\calS = \Enz \cup \Sub \cup \Int
	\]
	be a partition of the species set. We say that $G$ is a {\em PTM system} with {\em enzyme} set $\Enz$, {\em substrate} set $\Sub$, and {\em intermediates} set $\Int$ if it has properties (M1)--(M3) above.
\mytriangle	
\end{definition}

Let $G = (\calS, \calC, \calR)$ be a PTM system. If $\Int = \varnothing$, then set $G^* = (\calS^*, \calC^*, \calR^*) := G$. Otherwise, let $G^* = (\calS^*, \calC^*, \calR^*)$ be the network obtained from $G$ by the removal of the set of intermediates $\Int$. Thus,
	\[
		\calR^* = \calR_{\Sub} \cup \calR_{\Sub + \Enz} \cup \calR^Y_{\Sub + \Enz}\,,
	\]
	where $\calR^Y_{\Sub + \Enz}$ is the set of reactions of the form $S + E \rightarrow S' + E$ such that
	\[
		S + E \longrightarrow Y^{(1)} \longrightarrow \cdots \longrightarrow Y^{(\ell)} \longrightarrow S' + E
	\]
	is a reaction path in $G$ for some $E \in \Enz$, some $S, S' \in \Sub$ such that $S \neq S'$, and some $Y^{(1)}, \ldots, Y^{(\ell)} \in \Int$. 
	
Now $\calS^* \subseteq \Enz \cup \Sub$, and $\Enz^* := \Enz \cap \calS^*$, if nonempty, is a set of catalysts of $G^*$. Indeed, it follows directly from the form of the reactions that (C1) holds, and the subnetwork of $G^*$ implied by $\Enz^*$ is the empty network, so (C2) also holds. If $\Enz^* = \varnothing$, then we set $G^{**} = (\calS^{**}, \calC^{**}, \calR^{**}) := G^*$. Otherwise, let $G^{**} = (\calS^{**}, \calC^{**}, \calR^{**})$ be the network obtained from $G^*$ by the removal of the set of catalysts $\Enz^*$. Then $G^{**}$ is a monomolecular network consisting of the reactions $S \rightarrow S'$ such that $S + \alpha E \rightarrow S' + \alpha E \in \calR^*$ for some $E \in \Enz^*$, some $\alpha \in \{0, 1\}$, and some $S, S' \in \Sub$ such that $S \neq S'$. We refer to $G^{**}$ as the {\em underlying substrate network of $G$}. 

Regardless of whether $\Int$ or $\Enz$ are empty or nonempty, we shall abuse the terminology and refer to the reaction network $G^*$ as the reaction network obtained from $G$ by the removal of the set of intermediates $\Int$ and to $G^{**}$ as the reaction network obtained from $G^*$ by the removal of the set of catalysts $\Enz^*$, for simplicity.

{Note that $G^*$  and $G^{**}$ are themselves PTM systems with an empty set of intermediates and empty sets of intermediates and catalysts respectively.}

{By \cite[Equations (16) and (17)]{thomson--gunawardena-2009}, any PTM system is conservative (see also Lemma \ref{lemm:conservativecascades} below). In particular, persistence and bounded-persistence are equivalent for PTM systems in view of Proposition \ref{prop:persistent2bounded_persistent}.}

\begin{proposition}\label{prop:ptmpersistence}
	Let $G$ be a PTM system. Then the following properties are equivalent.
	\begin{itemize}
		\item[{{\em(}i\,{\em)}}] $G$ is consistent.
		
		\item[{{\em(}ii\,{\em)}}] Each connected component of the underlying substrate network $G^{**}$ is strongly connected.
		
		\item[{{\em(}iii\,{\em)}}] $G$ has the siphon/P-semiflow property.
		
		\item[{\em(}iv\,{\em)}] $G$ has no drainable siphons.
		
		\item[{{\em(}v\,{\em)}}] $G$ is persistent.
	\end{itemize}
\end{proposition}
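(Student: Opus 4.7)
The plan is to reduce $G$ in two stages to the monomolecular underlying substrate network $G^{**}$ described in the preamble to the proposition (first removing the intermediate set $\Int$ to obtain $G^*$, then removing the catalyst set $\Enz^* := \Enz \cap \calS^*$ to obtain $G^{**}$), transfer all the purely combinatorial properties through Theorems~\ref{thm:main} and \ref{thm:main2}, and then finish by applying Proposition~\ref{prop:monomolecular_equivalence} to $G^{**}$. Proposition~\ref{prop:persistent2bounded_persistent} will take care of folding persistence itself into the equivalence.

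Before chaining the theorems I would first confirm that the conservativity hypotheses they require are in place. By Thomson--Gunawardena \cite{thomson--gunawardena-2009} (and as will be recorded in Lemma~\ref{lemm:conservativecascades}), any PTM system is conservative, so $G$ is conservative. Condition (M1) forbids the complex $0$ from appearing in $\calR$, so Theorem~\ref{thm:main}(iv) gives that $G^*$ is conservative as well. Moreover, the subnetwork $G^*_{\Enz^*}$ implied by the catalysts is empty (as noted in the construction of $G^{**}$), and an empty network is vacuously conservative (Example~\ref{ex:emptynetwork}); hence Theorem~\ref{thm:main2}(iv) gives that $G^{**}$ is conservative. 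This puts $G^{**}$ into the regime where Proposition~\ref{prop:monomolecular_equivalence} furnishes the full equivalence of consistency, strong connectedness of each connected component, the siphon/P-semiflow property, absence of drainable siphons, bounded-persistence, and persistence.

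With these hypotheses secured, the bulk of the argument is a direct chaining. Theorem~\ref{thm:main}(i)--(iii) transfers consistency, the siphon/P-semiflow property, and the absence of drainable siphons back and forth between $G$ and $G^*$; Theorem~\ref{thm:main2}(i)--(iii) does the same between $G^*$ and $G^{**}$, where the converse direction of the consistency transfer invokes the conservativity of $G^*_{\Enz^*}$ just noted. Thus each of (i), (iii) and (iv) is equivalent to the corresponding property of $G^{**}$, and by the conservative monomolecular case of Proposition~\ref{prop:monomolecular_equivalence} each of these is in turn equivalent to (ii). This settles (i) $\Leftrightarrow$ (ii) $\Leftrightarrow$ (iii) $\Leftrightarrow$ (iv).

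Finally, to fold in (v) I would use Proposition~\ref{prop:persistent2bounded_persistent}. Since $G$ is conservative, parts (i) and (ii) of that proposition make persistence and bounded-persistence equivalent for $G$; part (iv) then yields (iv) $\Rightarrow$ bounded-persistent $\Rightarrow$ (v), while part (iii) yields (v) $\Rightarrow$ (i). The only real obstacle is bookkeeping: tracking which direction of each transfer theorem carries an auxiliary hypothesis and verifying at each step that the required hypothesis (essentially conservativity of $G$ and of $G^*_{\Enz^*}$) is indeed present. Once that is in order, the proposition reduces to a formal concatenation of the already-established results.
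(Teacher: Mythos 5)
Your proposal is correct and follows essentially the same route as the paper: reduce $G$ to the underlying substrate network $G^{**}$ via Theorems~\ref{thm:main} and \ref{thm:main2}, apply Proposition~\ref{prop:monomolecular_equivalence} to the conservative monomolecular network $G^{**}$, and fold in persistence via Proposition~\ref{prop:persistent2bounded_persistent}. The only cosmetic difference is that the paper closes a single cycle of one-directional implications (iii)\,$\Rightarrow$\,(iv)\,$\Rightarrow$\,(v)\,$\Rightarrow$\,(i)\,$\Rightarrow$\,(ii)\,$\Rightarrow$\,(iii), so it needs only one direction of each transfer theorem, whereas you establish two-way transfers at each stage (correctly checking the auxiliary conservativity hypotheses, e.g.\ that $G^*_{\Enz^*}$ is empty and hence vacuously conservative).
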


\begin{proof}
Using that $G$ is conservative, Proposition~\ref{prop:persistent2bounded_persistent} and Corollary \ref{cor:equivcrit} 
give the following implications:
({\em iii}\,) $\Rightarrow$   ({\em iv}\,)  $\Rightarrow$ ({\em v}\,)   $\Rightarrow$  ({\em i}\,).
Thus, it  is remains to show that ({\em i}\,) $\Rightarrow$ ({\em ii}\,) $\Rightarrow$ ({\em iii}\,).

	{({\em i}\,)} $\Rightarrow$ {({\em ii}\,)}. It follows from Theorems \ref{thm:main}({\em iii}\,) and \ref{thm:main2}({\em iii}\,) that $G^{**}$ is consistent. Since $G^{**}$ is conservative, Proposition \ref{prop:monomolecular_equivalence} gives that each connected component of $G^{**}$ is strongly connected.

{({\em ii}\,)} $\Rightarrow$ {({\em iii}\,)}. By Proposition \ref{prop:monomolecular_equivalence}, $G^{**}$ has the siphon/P-semiflow property. It then follows by Theorems \ref{thm:main2}({\em ii}\,) and \ref{thm:main}({\em ii}\,), respectively, that $G^*$ and, consequently, $G$ have the siphon/P-semiflow property.
\end{proof}

\begin{remark}
	In view of Proposition \ref{prop:monomolecular_equivalence}, statement {({\em ii}\,)} in Proposition \ref{prop:ptmpersistence} is equivalent to each of the statements that the underlying substrate network $G^{**}$ of $G$ is consistent, has the siphon/P-semiflow property, has no drainable siphons, or is persistent. Thus, either of these properties could also be checked to establish the persistence of $G$.
\mybox
\end{remark}

\begin{example}[An $n$-Site Phosphorylation Mechanism]\label{ex:2phosphorylation}
	The sequential and distributive $n$-site phosphorylation mechanism given by
	\[
		\begin{aligned}
			& E + S_0 \longleftrightarrow ES_0 \longrightarrow \cdots E + S_{n-1} \longleftrightarrow ES_{n-1} \longrightarrow E + S_n \\
			& F + S_n \longleftrightarrow FS_n \longrightarrow \cdots F + S_1 \longleftrightarrow FS_1 \longrightarrow F + S_0
		\end{aligned}
	\]
	is a PTM system with $\Int = \{ES_0, ES_1, \ldots, ES_{n-1}, FS_n, FS_{n-1}, \ldots, FS_1\}$, 
$\Enz = \{E, F\}$, and $\Sub = \{S_0, S_1, \ldots, S_n\}$.
	The underlying substrate network is
	\[
		S_0 \longleftrightarrow S_1 \longleftrightarrow \cdots \longleftrightarrow S_n\,.
	\]
	It consists of a single strongly connected component, so the PTM system is persistent by Proposition \ref{prop:ptmpersistence}.
\mydiamond
\end{example}

{%
\begin{example}
Consider the PTM system
	\[
	 E + S_0 \longleftrightarrow ES_0 \longrightarrow E + S_1. 
	\]
The underlying substrate network, $S_0 \longrightarrow S_1$,  is not strongly connected.
We conclude that the PTM system is not persistent.
\mydiamond
\end{example}
}

\subsubsection{Signaling Cascades of PTM Systems}

We now discuss a formalism for cascades of PTM systems. Intuitively, a signaling cascade of PTM systems is a reaction network that can be decomposed into a hierarchy of PTM systems in such a way that substrates at a certain level, or layer, may act as {enzymes} in lower levels (but not in higher levels).

Consider a reaction network $G = (\calS, \calC, \calR)$, and write the species, complex and reaction sets of the network as (not necessarily disjoint) unions,
\begin{equation}\label{eq:cascades}
		\calS = \bigcup_{i = 1}^T \calS_i\,,
		\quad
		\calC = \bigcup_{i = 1}^T \calC_i\,,
		\quad
		\text{and}
		\quad
		\calR = \bigcup_{i = 1}^T \calR_i\,.
\end{equation}
	Consider the following properties.
\begin{itemize}
	\item[(F1)] For each $i \in [T]$, $G_i := (\calS_i, \calC_i, \calR_i)$ is a PTM system with enzyme, substrate, and intermediates  sets, respectively, $\Enz_i$, $\Sub_i$, and $\Int_i$.
		
	\item[(F2)] $\displaystyle \Sub_j \cap \left( \bigcup_{i = 1}^{j-1} \Sub_i \right) = \varnothing$, $j = 2, \ldots, T$.
	
	\item[(F3)] $\displaystyle \Enz_j \cap \left( \bigcup_{i = 1}^j \Sub_i \right) = \varnothing$, $j = 1, \ldots, T$.
	
	\item[(F4)] $\displaystyle \left( \bigcup_{i = 1}^T \Int_i \right) \cap \left( \bigcup_{i = 1}^T (\Enz_i \cup \Sub_i) \right) = \varnothing$.
\end{itemize}

\begin{definition}[Signaling Cascades of PTM Systems]\label{def:ptmcascades}
	Let $G = (\calS, \calC, \calR)$ be a reaction network. We say that $G$ is a {\em signaling cascade of PTM systems} if {there is a decomposition of the species, complex and reaction sets as in \eqref{eq:cascades} that satisfy} properties (F1)--(F4). In this case, we set
	\[
		\Enz := \bigcup_{i = 1}^T \Enz_i\,,
		\quad
		\Sub := \bigcup_{i = 1}^T \Sub_i\,,
		\quad
		\text{and}
		\quad
		\Int := \bigcup_{i = 1}^T \Int_i\,,
	\]
	and the PTM systems $G_1 = (\calS_1, \calC_1, \calR_1), \ldots,$ $G_T = (\calS_T, \calC_T, \calR_T)$ are referred to as the {\em layers} of the cascade.
	\mytriangle	
\end{definition}

\begin{remark}\label{rk:ptmcascade}
If $G = (\calS, \calC, \calR)$ is a signaling cascade of PTM systems, then by (F1)
and (F4), the species set $\calS$ can be partitioned into the two subsets $\Enz \cup \Sub$ and $\Int$. That is, 
$\calS=(\Enz \cup \Sub) \sqcup \Int$. Furthermore, $\Int$ is a set of intermediates of $G$, provided that it is nonempty.
\mybox 
\end{remark}

Observe that (F3) implies that any enzyme that is a substrate in some layer may appear in any layer below it, and not just the one immediately below the layer where it acts as a substrate. Thus, the layer hierarchy implied in the definition of signaling cascades of PTM systems may be a tree, in other words, it is not constrained to linear, sequential relationships where each layer can only provide the layer immediately after with enzymes.

Signaling cascades of PTM systems are {always} conservative.

\begin{lemma}\label{lemm:conservativecascades}
	Any signaling cascade of PTM systems is conservative.	
\end{lemma}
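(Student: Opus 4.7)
The plan is to exhibit an explicit strictly positive P-semiflow $\omega \gg 0$ of $G$ and verify directly that $\omega^T N = 0$. By Remark~\ref{rk:ptmcascade}, the species set splits as $\calS = (\Enz \cup \Sub) \sqcup \Int$, which suggests defining
\[
\omega_X := 1 \quad \text{for } X \in \Enz \cup \Sub, \qquad \omega_Y := 2 \quad \text{for } Y \in \Int.
\]
This is well-defined on all of $\calS$ regardless of how many layers a given species happens to belong to, and is coordinatewise strictly positive by construction.

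To check $\omega^T N = 0$, I would observe that every reaction of $G$ belongs to some layer $G_i$ and therefore, by (M1), has one of the five shapes $S \rightarrow S'$, $S + E \rightarrow S' + E$, $S + E \rightarrow Y$, $Y \rightarrow S' + E$, or $Y \rightarrow Y'$, with $S, S' \in \Sub_i$, $E \in \Enz_i$, and $Y, Y' \in \Int_i$. A one-line case check then shows that the $\omega$-weight of the reactant equals that of the product in each case, namely $1 = 1$, $1+1 = 1+1$, $1+1 = 2$, $2 = 1+1$, and $2 = 2$ respectively. Hence $\omega$ is a P-semiflow and, since $\omega \gg 0$, the network $G$ is conservative.

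The only potential obstacle is coherence across layers: a single species may participate in reactions of several layers, and one must be sure the assignment of $\omega_X$ dictated by different layers does not conflict. Property (F4) guarantees that $\Int$ is disjoint from $\Enz \cup \Sub$, so no species is classified both as an intermediate and as an enzyme or substrate; and within $\Enz \cup \Sub$ our assignment is the constant $1$, so the value of $\omega_X$ is indifferent to the layer(s) in which $X$ participates (for example, an $X \in \Enz_j \cap \Sub_k$ gets weight $1$ either way). With this compatibility observation in place, the argument reduces to the routine case check above, so I do not anticipate any genuine difficulty.
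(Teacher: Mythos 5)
Your proposal is correct and coincides with the paper's own proof: the same weight assignment ($1$ on $\Enz \cup \Sub$, $2$ on $\Int$, justified via Remark~\ref{rk:ptmcascade}) and the same case check over the reaction shapes from (M1). The paper merely leaves the case check and the cross-layer coherence implicit, both of which you spell out correctly.
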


\begin{proof}
	Let $G = (\calS, \calC, \calR)$ be a cascade of PTM systems. Write $\calS = \{S_1, \ldots, S_n\}\,.$ {With the notation in Definition~\ref{def:ptmcascades} and using Remark~\ref{rk:ptmcascade}}, for each $i \in [n]$, set
	\[	
		\omega_i :=
		\left\{
		\begin{array}{rl}
			1\,, &\text{if}\ S_i \in \Enz \cup \Sub \\[1ex]
			2\,, &\text{if}\ S_i \in \Int\,.
		\end{array}
		\right.
	\]
	Then $\omega := (\omega_1, \ldots, \omega_n)$ is a conservation law of $G$. This can be readily seen from the possible forms a reaction in $\calR$ may take. Since every entry of $\omega$ is strictly positive, this means $G$ is conservative.
\end{proof}

\begin{proposition}\label{prop:cascades}
	Let $G$ be a signaling cascade of PTM systems. Then the following properties are equivalent.
	\begin{itemize}
		\item[{{\em(}i\,{\em)}}] $G$ is consistent.
		
		\item[{{\em(}ii\,{\em)}}] The connected components of the underlying substrate network of each layer of $G$ are strongly connected.
		
		\item[{{\em(}iii\,{\em)}}] $G$ has the siphon/P-semiflow property.
		
		\item[{\em(}iv\,{\em)}] $G$ has no drainable siphons.
		
		\item[{\em(}v\,{\em)}] $G$ is persistent.
	\end{itemize}
\end{proposition}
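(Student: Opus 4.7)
The plan is to imitate the proof of Proposition~\ref{prop:ptmpersistence}, using Lemma~\ref{lemm:conservativecascades} together with induction on the number of layers $T$. Conservativity combined with Proposition~\ref{prop:persistent2bounded_persistent} and Corollary~\ref{cor:equivcrit} directly yields \ilr{iii} $\Rightarrow$ \ilr{iv} $\Rightarrow$ \ilr{v} $\Rightarrow$ \ilr{i}, exactly as in the proof of Proposition~\ref{prop:ptmpersistence}. The remaining task is therefore to establish the cycle \ilr{i} $\Rightarrow$ \ilr{ii} $\Rightarrow$ \ilr{iii}, which I would do by induction on $T$, with the base case $T = 1$ handled by Proposition~\ref{prop:ptmpersistence}.

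For the inductive step, let $G$ be a cascade with $T > 1$ layers and let $G'$ denote the subcascade consisting of the first $T - 1$ layers (itself a cascade of PTM systems to which the inductive hypothesis applies). First reduce $G$ by removing the entire intermediate set $\Int$ (valid by Remark~\ref{rk:ptmcascade}) and then by removing $\Enz_T$ as a set of catalysts; the catalyst property holds because (F3) gives $\Enz_T \cap \Sub = \varnothing$, so top-layer enzymes appear only as catalysts and the subnetwork they imply is empty. Call the resulting network $G^\sharp$. By Theorems~\ref{thm:main} and~\ref{thm:main2}, each of properties \ilr{i}--\ilr{iv} transfers between $G$ and $G^\sharp$.

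The key structural observation is that the stoichiometric matrix of $G^\sharp$ is block-diagonal with respect to the partition of species into $\Sub_T$ versus the rest and the partition of reactions into layer-$T$ versus layer-$<T$: every layer-$T$ reaction in $G^\sharp$ has the form $S \to S'$ with $S, S' \in \Sub_T$, and so these reactions constitute exactly $G_T^{**}$; conversely, $\Sub_T$-species appear in layer-$<T$ reactions only as catalysts, so they do not affect the stoichiometric balance there. The lower block corresponds to a network $H$ obtained from $G'$ by first removing the intermediate set $\Int_{<T}$ and then removing $\Enz_T \cap \Enz_{<T}$ as catalysts. These catalysts really are pure enzymes of $G'$ (since $\Enz_T \cap \Sub_{<T} = \varnothing$ by (F3)), so the reduction is valid, and Theorems~\ref{thm:main} and~\ref{thm:main2} guarantee that $H$ shares each of \ilr{i}--\ilr{iv} with $G'$.

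Combining these ingredients, for \ilr{i} $\Rightarrow$ \ilr{ii} one splits a strictly positive T-semiflow of $G^\sharp$ along the block decomposition into strictly positive T-semiflows of $G_T^{**}$ and of $H$, hence of $G_T^{**}$ and of $G'$; Proposition~\ref{prop:monomolecular_equivalence} and the inductive hypothesis then deliver strong connectivity of the connected components of every $G_i^{**}$. For \ilr{ii} $\Rightarrow$ \ilr{iii}, given a siphon $\Sigma$ of $G^\sharp$, partition it by $\Sub_T$: if $\Sigma \cap \Sub_T \neq \varnothing$ it is a siphon of $G_T^{**}$, which has the siphon/P-semiflow property by Proposition~\ref{prop:monomolecular_equivalence} and \ilr{ii}; otherwise $\Sigma$ is a siphon of $H$, which has the siphon/P-semiflow property by the inductive hypothesis applied to $G'$. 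In either case the resulting P-semiflow extends to $G^\sharp$ by padding with zeros on the complementary species, thanks precisely to the block-diagonality. I expect the main obstacle to be the bookkeeping needed to verify the catalyst and intermediate conditions of Theorems~\ref{thm:main} and~\ref{thm:main2} across the networks $G$, $G^\sharp$, $H$, and $G'$, particularly because the species in $\Sub_T \cap \Enz_{<T}$ inhabit several of these networks simultaneously as either substrates or catalysts.
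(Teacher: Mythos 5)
Your proposal follows essentially the same route as the paper: the same reduction chain (remove $\Int$, then $\Enz_T$ as catalysts), the same block-diagonal decomposition of the stoichiometric matrix separating $\Sub_T$ from the lower layers, and the same induction on $T$ with T-semiflows and siphons split along the blocks. The only imprecision is in \ilr{ii} $\Rightarrow$ \ilr{iii}, where a siphon of $G^\sharp$ meeting $\Sub_T$ need not itself be a siphon of $G_T^{**}$; as in the paper, one should restrict to \emph{minimal} siphons and use strong connectivity of the components of $G_T^{**}$ to conclude that such a minimal siphon equals one of those components.
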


The proof of Proposition \ref{prop:cascades} will be given in the next subsubsection.

\begin{example}[Double Phosphorylation Cascade]
	Consider the concatenation of double phosphorylation mechanisms from Example \ref{ex:2phosphorylation} given by the reaction network
	\[
		\arraycolsep=0.25em
		\begin{array}{ccccccccc}
			E + S_0 & \longleftrightarrow & ES_0 & \longrightarrow & E + S_1 & \longleftrightarrow & ES_1 & \longrightarrow & E + S_2 \\
			F_1 + S_2 & \longleftrightarrow & F_1S_2 & \longrightarrow & F_1 + S_1 & \longleftrightarrow & F_1S_1 & \longrightarrow & F_1 + S_0 \\[1ex]
		
			S_2 + P_0 & \longleftrightarrow & S_2P_0 & \longrightarrow & S_2 + P_1 & \longleftrightarrow & S_2P_1 & \longrightarrow & S_2 + P_2 \\
			F_2 + P_2 & \longleftrightarrow & F_2P_2 & \longrightarrow & F_2 + P_1 & \longleftrightarrow & F_2P_1 & \longrightarrow & F_2 + P_0\,.
		\end{array}
	\]
	The double phosphorylation of a substrate $S_0$ is catalyzed by a kinase $E$, and the dephosphorylation of its singly and doubly phosphorylated forms is catalyzed by a phosphatase $F_1$. The doubly phosphorylated form $S_2$ of $S_0$ then acts as a kinase in a similar double phosphorylation/dephosphorilation mechanism for another substrate $P_0$. This is a signaling cascade of PTM systems with
\begin{align*}
		\Enz_1 & = \{S_2, F_2\}\,, \\ \Sub_1 & = \{P_0, P_1, P_2\}\,, \\  \Int_1 &= \{S_2P_0, S_2P_1, F_2P_2, F_2P_1\}\,, \\
		\Enz_2 & = \{E, F_1\}\,, \\	\Sub_2 & = \{S_0, S_1, S_2\}\,,	 \\   \Int_2 &= \{ES_0, ES_1, F_1S_2, F_1S_1\}\,.
\end{align*}
	Each of the layers of the cascade coincides with the double phosphorylation mechanism in Example \ref{ex:2phosphorylation} {with $n=2$}. In particular, {Proposition \ref{prop:cascades}\olr{ii} holds, hence the network is {persistent}.}
\mydiamond
\end{example}

In \cite{gnacadja-2011}, the persistence of a class of cascades of PTM systems (there called {\em cascaded binary enzymatic networks}) is studied under mass-action kinetics. There is an overlap between the class of networks studied in \cite{gnacadja-2011} and the class of cascades of PTM systems considered here, although neither is more general than the other, nor do they agree. For instance, we allow for individual enzymes to take part in reactions in more than one layer of the cascade. In \cite{gnacadja-2011}, sufficient conditions for a stronger concept of persistence ({\em vacuous persistence}) are given in terms of the so-called {\em futility} of the network \cite[Theorem 6.7]{gnacadja-2011}. 
{For conservative networks, vacuous persistence is equivalent to persistence together with the absence of boundary steady states in the stoichiometric compatibility classes of $G$ that are not entirely contained in the boundary \cite[Proposition 5.2]{gnacadja-2011i}. In view of Proposition \ref{prop:persistent2bounded_persistent}\olr{v} and Proposition \ref{prop:cascades}, persistence of a cascade of PTM systems in our setting is equivalent to vacuous persistence.}
Futility implies that the connected components of the underlying substrate network of the cascaded PTM system are strongly connected \cite[Remark 4.6]{gnacadja-2011}. However, the condition is not necessary for futility. Therefore, our results establish that, for the overlapping class of cascades of PTM systems, strong connectedness of the components of the underlying substrate network is also necessary for vacuous persistence. We also note that our results are stated under more general kinetic assumptions.

\subsubsection{Proof of Proposition \ref{prop:cascades}}

Since $G$ is conservative, Proposition~\ref{prop:persistent2bounded_persistent} and Corollary \ref{cor:equivcrit} 
give the  implications
({\em iii}\,) $\Rightarrow$   ({\em iv}\,)  $\Rightarrow$ ({\em v}\,)   $\Rightarrow$  ({\em i}\,).

It remains to prove {({\em i}\,)} $\Rightarrow$ {({\em ii}\,)} and {({\em ii}\,)} $\Rightarrow$ {({\em iii}\,)}.
We begin with a few simple observations about signaling cascades of PTM systems. 

{Let $G$ be a signaling cascade of PTM systems with layers $G_1, \ldots, G_T$.
If the set of intermediates  $\Int$ is nonempty, let $G^*$ be the reaction network obtained by its removal. For each $i \in [T]$, let $G_i^*$ be the reaction network obtained from $G_i$ by the removal of the set of intermediates $\Int_i$.}

{By the next lemma,  we may assume without loss of generality that the cascade has no intermediates.}

\begin{lemma}\label{lemm:wlog}
	In the construction above, $G^*$ is a signaling cascade of PTM systems with layers $G_1^*, \ldots, G_T^*$. Furthermore, 
	\begin{itemize}
		\item[{{\em(}i\,{\em)}}] $G^*$ has the siphon/P-semiflow property if, and only if $G$ does also, and
		
		\item[{{\em(}ii\,{\em)}}] $G^*$ is consistent if, and only if $G$ is also.
	\end{itemize}
\end{lemma}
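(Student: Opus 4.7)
The plan is to first verify that $\Int = \bigcup_{i=1}^T \Int_i$ is a set of intermediates of the ambient network $G$, and then to deduce parts (i) and (ii) directly from Theorem~\ref{thm:main}(ii) and (iii) applied to $G$ and its reduction $G^*$. The structural statement that $G_1^*, \ldots, G_T^*$ form a signaling cascade decomposition of $G^*$ would be addressed separately.

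For the first step, I would check condition (I1) by using $\calC = \bigcup_i \calC_i$ together with (M1) at the level of each layer: every $y \in \calC_i$ is either a nonnegative integer combination of elements of $\Enz_i \cup \Sub_i \subseteq \Enz \cup \Sub$ or a single element of $\Int_i \subseteq \Int$. Condition (I2) would be verified pointwise: for each $Y \in \Int$ we have $Y \in \Int_i$ for some $i$, and the two required reaction paths already exist in $G_i$ because $\Int_i$ is a set of intermediates of $G_i$. With $\Int$ confirmed as a set of intermediates of $G$, statements (i) and (ii) then follow immediately from Theorem~\ref{thm:main}.

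The structural claim requires checking (F1)--(F4) for the reduced layers. (F1) and (F4) would be automatic, since by construction each $G_i^*$ is a PTM system whose set of intermediates is empty. (F2) and (F3) would be inherited from the original decomposition, since substrate and enzyme sets are preserved by the reduction. The substantive verification is $\calR^* = \bigcup_i \calR_i^*$; the inclusion ``$\supseteq$'' is direct, as each reaction path in $G_i$ through $\Int_i$ is also a reaction path in $G$ through $\Int$, and so every reaction created by reducing at the layer level is also created by reducing $G$ globally.

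The main obstacle will be the reverse inclusion. Given $y \rightarrow y' \in \calR^*$ arising from a reaction path $y \rightarrow Y^{(1)} \rightarrow \cdots \rightarrow Y^{(\ell)} \rightarrow y'$ in $G$ with $y, y' \notin \Int$ and $Y^{(k)} \in \Int$, I would need to show that the entire path can be assigned to a single layer $G_l$. Each internal reaction $Y^{(k)} \rightarrow Y^{(k+1)}$ belongs to some $\calR_l$ and, by (M1), must lie in $\calR_{\Int_l}$, constraining both endpoints to $\Int_l$; the boundary reactions must then lie in $\calR_{\rightarrow \Int_l}$ and $\calR_{\Int_l \rightarrow}$ respectively, where (M3) forces the enzymes entering and exiting the intermediate segment to coincide, and the pairwise disjointness of the substrate sets provided by (F2) uniquely pins down the layer index $l$ from the substrate content of $y$ and $y'$. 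The careful bookkeeping required to glue these local layer assignments into a single global one is the principal technical point of the proof.
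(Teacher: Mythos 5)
Your overall route is the same as the paper's: establish that $\Int$ is a set of intermediates of the full cascade $G$ (the paper records this separately as Remark~\ref{rk:ptmcascade}), deduce (i) and (ii) from Theorem~\ref{thm:main}({\em ii}\,)--({\em iii}\,), and verify (F1)--(F4) for the reduced layers with $\Sub_i^* = \Sub_i \cap \calS_i^*$, $\Enz_i^* = \Enz_i \cap \calS_i^*$ and $\Int_i^* = \varnothing$. That part of the plan is sound and matches the published argument. You are in fact more scrupulous than the paper in one respect: you isolate the identity $\calR^* = \bigcup_i \calR_i^*$ as the substantive content of the structural claim, whereas the paper asserts (F1) for the reduced layers without comment.

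However, the resolution you sketch for the inclusion $\calR^* \subseteq \bigcup_i \calR_i^*$ does not yet close it. Property (M3) is a hypothesis on each individual layer $G_l$, so it only forces the entering and exiting enzymes of an intermediate chain to coincide when all reactions of that chain lie in the single reaction set $\calR_l$. Condition (F2) makes the substrate sets pairwise disjoint, but nothing in (F1)--(F4) makes the sets $\Int_1, \ldots, \Int_T$ pairwise disjoint; a priori an internal reaction $Y^{(k)} \rightarrow Y^{(k+1)}$ could belong to the $\calR_{\Int}$-part of one layer and the next reaction to that of a different layer, with $Y^{(k+1)}$ shared between the two intermediate sets. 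In that case your local layer assignments need not glue, the endpoints $y = S + E$ and $y' = S' + E'$ may sit in different layers, and the collapsed reaction $S + E \rightarrow S' + E'$ with $E \neq E'$ would not have PTM form at all. To finish, you must argue that a maximal intermediate segment of a reaction path determines a single layer --- for instance by showing the $\Int_i$ may be taken pairwise disjoint without loss of generality, or by reading that disjointness into Definition~\ref{def:ptmcascades}. This is the one genuine gap in your plan, and it is precisely the step the paper itself leaves implicit.
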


\begin{proof}
	For each $i \in [T]$, set $\Sub^*_i := Sub_i \cap \calS^*_i$, $\Enz^*_i := \Enz_i \cap \calS^*_i$, and $\Int^*_i := \varnothing$. Then $G_i^*$ is a PTM system with enzyme, substrate, and intermediates sets, respectively, $\Enz^*_i$, $\Sub^*_i$, and $\Int^*_i$, thus satisfying (F1). Properties (F2) and (F3) are inherited directly from $G$, and (F4) is trivial. This proves the first statement. Statements {({\em i}\,)} and {({\em ii}\,)} then follow directly from {Theorem \ref{thm:main}({\em ii}\,)--({\em iii}\,)}.
\end{proof}

Throughout the rest of this subsection, $G = (\calS, \calC, \calR)$ will be assumed to be a signaling cascade of PTM systems with an empty set of intermediates.

Next, let $G^- = (\calS^-, \calC^-, \calR^-)$ be the reaction network  with
\[
		\calS^- := \bigcup_{i = 1}^{T-1} \calS_i\,
		\qquad
		\calC^- := \bigcup_{i = 1}^{T-1} \calC_i\,
		\quad
		\text{and}
		\quad
		\calR^- := \bigcup_{i = 1}^{T-1} \calR_i\,.
\]
Set
	\[
		\Enz'_T := \Enz_T \cap \left( \bigcup_{i = 1}^{T - 1} \Enz_i \right)\,.
	\]
If $\Enz'_T \neq \varnothing$, then it is a set of catalysts of $G^-$. So, define $G'$ to be the network obtained from $G^-$ by the removal of the set of catalysts $\Enz'_T$.

\begin{lemma}\label{lemm:inductivehypothesis}
	In the construction above, $G'$ is a cascade of PTM systems with $T-1$ layers $G'_1, \ldots, G'_T$. Furthermore, for each $i \in [T-1]$, the underlying substrate networks of $G'_i$	and $G_i$ coincide.
\end{lemma}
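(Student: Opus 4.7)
The plan is to build the layers of $G'$ explicitly from the layers of $G^-$ and then verify conditions (F1)--(F4) together with the coincidence of the underlying substrate networks. The case $\Enz'_T = \varnothing$ is immediate, since then $G' = G^-$ inherits its layers $G_1, \ldots, G_{T-1}$ unchanged from $G$. So I would focus on the case $\Enz'_T \neq \varnothing$.

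For each $i \in [T-1]$, I would define $G'_i$ by setting $\Enz'_i := \Enz_i \setminus \Enz'_T$, $\Sub'_i := \Sub_i$, $\Int'_i := \varnothing$, and taking $\calR'_i$ to be the reactions obtained from $\calR_i$ by stripping off any instance of an enzyme in $\Enz'_T$. Since $\Int = \varnothing$ by the standing assumption, every reaction of $G_i$ has the form $S \rightarrow S'$ or $S + E \rightarrow S' + E$ with $E \in \Enz_i$ and $S, S' \in \Sub_i$; the reduction turns $S + E \rightarrow S' + E$ into $S \rightarrow S'$ when $E \in \Enz'_T$ and leaves everything else unchanged. The complex and species sets $\calC'_i$ and $\calS'_i$ are then defined as the complexes and species appearing in $\calR'_i$. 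One verifies (M1) directly from this description, while (M2) and (M3) hold vacuously. I would then check (F1)--(F4) for the decomposition $\calR' = \bigcup_{i=1}^{T-1} \calR'_i$: (F1) is what was just established; (F2) and (F3) follow from the corresponding statements for $G$ via $\Enz'_i \subseteq \Enz_i$ and $\Sub'_i = \Sub_i$; (F4) is automatic since $\bigcup_i \Int'_i = \varnothing$.

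For the second assertion, the underlying substrate network of $G_i$ is obtained by stripping \emph{all} enzymes of $\Enz_i$ from the reactions of $G_i$, so each reaction $S + E \rightarrow S' + E$ or $S \rightarrow S'$ in $\calR_i$ contributes $S \rightarrow S'$ (when $S \neq S'$). The underlying substrate network of $G'_i$ is obtained by stripping $\Enz'_i = \Enz_i \setminus \Enz'_T$ from $\calR'_i$. Because the enzymes in $\Enz'_T$ have already been removed in the formation of $G'_i$, the composite of the two catalyst-removal steps on $G_i$ produces exactly the same reactions $S \rightarrow S'$. Hence the two substrate networks coincide.

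The main obstacle is the bookkeeping required to confirm that the layer-by-layer transformation I describe coincides with the global catalyst-removal construction of $G'$ from $G^-$: one must check that a reaction of $G_i$ is affected by the removal of $\Enz'_T$ exactly as it should be, and, conversely, that every reaction of $G'$ lies in precisely one of the $\calR'_i$ constructed above. A secondary technical point is verifying $\Sub'_i = \Sub_i$, i.e.\ that no substrate silently disappears; this relies on the fact that every substrate of $G_i$ participates in some reaction of $\calR_i$, which is retained (possibly with catalysts stripped) in $\calR'_i$.
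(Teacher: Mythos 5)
Your proposal is correct and follows essentially the same route as the paper: both define $\calR'_i$ by stripping the enzymes of $\Enz'_T$ (equivalently, of $\Enz_T$, since $\Enz_i\cap\Enz_T=\Enz_i\cap\Enz'_T$ for $i<T$) from the reactions of each layer, verify (M1)--(M3) and (F1)--(F4) exactly as you outline, and obtain the second claim by observing that the two-stage catalyst removal yields the same reactions $S\rightarrow S'$ as removing all of $\Enz_i$ at once. The bookkeeping you flag as the main obstacle is handled in the paper at the same level of detail you give, so nothing further is needed.
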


\begin{proof}
	For each $i \in [T-1]$, define $\calR'_i$ to be the set of reactions $S + \alpha E \rightarrow S' + \alpha E \in \calR_i$ such that $S, S' \in \Sub_i$, $S \neq S'$, $E \in \Enz_i \backslash \Enz_T$, and $\alpha \in \{0, 1\}$, plus the reactions $S \rightarrow S'$ such that $S + E \rightarrow S' + E \in \calR_i$ for some $S, S' \in \Sub_i$, $S \neq S'$, and $E \in \Enz_T$. Then define $G'_i = (\calS'_i, \calC'_i, \calR'_i)$ to be the reaction network determined by $\calR'_i$. We then have
	\[
		\calS' = \bigcup_{i = 1}^{T-1} \calS'_i\,,
		\quad
		\calC' = \bigcup_{i = 1}^{T-1} \calC'_i\,,
		\quad
		\text{and}
		\quad
		\calR' = \bigcup_{i = 1}^{T-1} \calR'_i\,.
	\]
	Now $G'_i$ is a PTM system with $\Enz'_i = \Enz_i \backslash \Enz_T$, $\Sub'_i = \Sub_i$, and $\Int'_i = \varnothing$, $i = 1, \ldots, T-1$. Indeed, (M1) is fulfilled by construction, and (M2) and (M3) hold vacuously. Thus, (F1) holds. Furthermore, properties (F2) and (F3) are inherited from $G$, and (F4) is fulfilled vacuously. This shows $G'$ is a signaling cascade of PTM systems with layers $G'_1, \ldots, G'_{T-1}$.
	
	To establish the second statement, it is enough to show that $(\calR'_i)^{**} = \calR_i^{**}$, $i = 1, \ldots, T-1$. Let $i \in [T-1]$, and $S \rightarrow S' \in (\calR'_i)^{**}$. Then, by construction, $S + \alpha E \rightarrow S' + \alpha E \in \calR_i$ for some $S, S' \in \Sub_i$, $S \neq S'$, $E \in \Enz_i$, and $\alpha \in \{0, 1\}$, and so $S \rightarrow S' \in \calR_i^{**}$. Conversely, if $S \rightarrow S' \in \calR_i^{**}$, then $S + \alpha E \rightarrow S' + \alpha E \in \calR_i$ for some $S, S' \in \Sub_i$, $S \neq S'$, $E \in \Enz_i$, and $\alpha \in \{0, 1\}$. If $E \in \Enz_T$ and $\alpha = 1$, then we get $S \rightarrow S' \in \calR'_i$ by construction, and so $S \rightarrow S' \in (\calR'_i)^{**}$. Otherwise, $S + \alpha E \rightarrow S' + \alpha E \in \calR'_i$, and so $S \rightarrow S' \in (\calR'_i)^{**}$ after the removal of catalysts.
\end{proof}

Finally, let $\widehat G_T$ be the reaction network obtained from $G$ by the removal of the set of catalysts $\Enz_T$, and let $G^{**}_T$ be the underlying substrate network of $G_T$. 
Upon ordering the species and reactions of $\widehat G_T$ in such a way that all species belonging to $\Sub_T$ correspond to the bottom-most rows, and all monomolecular reactions between species in $\Sub_T$ correspond to the right-most columns, the stoichiometric matrix of $\widehat G_T$ may be written as
	
	\begin{equation}\label{eq:inductivehypothesis}
	\widehat N_T = \left[ \begin{array}{cc}
			N^\prime & 0 \\ 
			0 & N^{**}_T 
		\end{array}\right]\,,
		\end{equation}
where $N'$ is the stoichiometric matrix of the network $G'$ introduced above, and $N^{**}_T$ is the stoichiometric matrix of $G^{**}_T$. This decomposition will be used in the proofs of the next two results.

\medskip
\noindent {\textit{Proof of \ilr{i} $\Rightarrow$ \ilr{ii} in Proposition~\ref{prop:cascades}.}}
	We use induction on the number $T$ of layers. 
		For $T = 1$, this follows from Proposition \ref{prop:ptmpersistence}. 
	
	Now suppose the result holds for cascades of PTM systems with $T - 1$ layers for some $T \geqslant 2$, and let $G$ be a cascade with $T$ layers. 
	By Theorem \ref{thm:main2}({\em iii}\,), $\widehat G_T$ is consistent. So, there exists a $\widehat v_T \gg 0$ such that $\widehat N_T \widehat v_T = 0$. We may write $\widehat v_T = (v', v^{**}_T)$, where $v'$ corresponds to the reactions of $G'$, and $v^{**}_T$ corresponds to the reactions of $G^{**}_T$. From (\ref{eq:inductivehypothesis}), we obtain
	\[
		N'v' = 0
		\quad
		\text{and}
		\quad
		N^{**}_Tv^{**}_T = 0\,,
	\]
	concluding that $G'$ and $G^{**}_T$ are consistent. It follows by the inductive hypothesis, Lemma \ref{lemm:inductivehypothesis}, and Proposition \ref{prop:monomolecular_equivalence} that $G^{**}_1, \ldots, G^{**}_{T - 1}, G^{**}_T$, the underlying substrate networks of $G_1, \ldots, G_{T-1}, G_T$, respectively, are such that their connected components are strongly connected. This establishes the inductive step, proving the result.
	\mybox

\medskip
\noindent {\textit{Proof of \ilr{ii} $\Rightarrow$ \ilr{iii} in Proposition~\ref{prop:cascades}.}}
	We use induction on the number $T$ of layers.
		For $T = 1$, this follows from Proposition \ref{prop:ptmpersistence}.
	
	Now suppose the result holds for signaling cascades of PTM systems with $T - 1$ layers for some $T \geqslant 2$, and let $G$ be a cascade with $T$ layers. 
	By Theorem \ref{thm:main2}({\em ii}\,), it  is enough to show that $\widehat G_T$ has the siphon/P-semiflow property.
	
	By construction, the species set $\widehat\calS_T$ of $\widehat G_T$ can be partitioned as the disjoint union $\widehat\calS_T = \calS' \cup \calS^{**}_T$ of the species sets of $G'$ and $G^{**}_T$. We claim that every minimal siphon of $\widehat G_T$ is entirely contained in either $\calS'$ or $\calS^{**}_T$. To see this, let $\widehat \Sigma_T$ be any minimal siphon of $\widehat G_T$, and suppose it is not entirely contained in $\calS'$. So, $\Sigma_T \cap \calS^{**}_T \neq \varnothing$. By hypothesis, $G^{**}_T$ is a monomolecular network with the property that each of its connected components is strongly connected. Thus, each of its connected components is a minimal siphon. We conclude that $\Sigma_T$ contains one of the connected components of $G^{**}_T$ and, by minimality, must be actually equal to it.
	
	By the inductive hypothesis and Lemma \ref{lemm:inductivehypothesis}, $G'$ has the siphon/P-semiflow property. By Proposition \ref{prop:monomolecular_equivalence}, $G^{**}_T$ also has the siphon/P-semiflow property. We conclude from the block-diagonal decomposition in (\ref{eq:inductivehypothesis}) and the claim above that $\widehat G_T$ has the siphon/P-semiflow property.
	\mybox

\subsection{Dissipative Networks}\label{subsec:dissipative}

In the next definition, we use the same notation as in Subsection \ref{subsec:persistence}.

\begin{definition}[Dissipative Networks]\label{def:dissipative}
	A reaction network (\ref{eq:crntode}) is said to be {\em dissipative} if its solutions are eventually uniformly bounded. More precisely, if there exists a constant $K \geqslant 0$ such that
	\[
		\limsup_{t \to \infty} |\sigma(t, s_0)| \leqslant K\,,
	\]
	for each initial state $s_0 \geqslant 0$.
\mytriangle	
\end{definition}

\begin{corollary}\label{cor:dissipative2persistent}
If a dissipative reaction network is bounded-persistent, then it is persistent.	
\end{corollary}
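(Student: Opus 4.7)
The plan is to argue by contradiction, using the fact that a bounded positive semi-orbit has a nonempty, compact $\omega$-limit set (a standard consequence of forward-completeness, hypothesis (r3)).

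Suppose $G$ is dissipative and bounded-persistent, but not persistent. Then by Definition~\ref{def:persistence} there exists some initial state $s_0 \gg 0$, some index $i \in [n]$, and a sequence $(t_k)_{k \in \n}$ in $\rplus$ with $t_k \to \infty$ such that $\sigma_i(t_k, s_0) \to 0$ as $k \to \infty$. The aim is to extract from $(\sigma(t_k, s_0))_{k \in \n}$ a convergent subsequence whose limit lies in $\partial \rplus^n$, thereby producing a point of $\omega(s_0) \cap \partial \rplus^n$ and contradicting bounded-persistence.

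The key step is to invoke dissipativity. Since $\limsup_{t \to \infty} |\sigma(t, s_0)| \leqslant K$, the sequence $(\sigma(t_k, s_0))_{k \in \n}$ is eventually contained in the compact set $\rplus^n \cap \{x \in \r^n\,|\ |x| \leqslant K + 1\}$. By the Bolzano--Weierstrass theorem, it admits a subsequence $(\sigma(t_{k_j}, s_0))_{j \in \n}$ converging to some point $s^* \in \rplus^n$. By the characterization of $\omega$-limit sets recalled before Definition~\ref{def:boundedpersistence}, $s^* \in \omega(s_0)$. On the other hand, passing to the limit in the $i$-th coordinate gives $s^*_i = \lim_j \sigma_i(t_{k_j}, s_0) = 0$, so $s^* \in \partial \rplus^n$. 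Hence $s^* \in \omega(s_0) \cap \partial \rplus^n$, which contradicts the assumption that $G$ is bounded-persistent.

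There is no real obstacle here; the only thing to be slightly careful about is ensuring that dissipativity plus forward-completeness genuinely yield relative compactness of the tail of the orbit, so that a convergent subsequence exists. Since dissipativity as stated gives an eventual uniform bound and (r3) ensures the trajectory is defined for all $t \geqslant 0$, both ingredients are in place.
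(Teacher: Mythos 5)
Your argument is correct and is essentially the paper's own proof: the paper notes that dissipativity makes every solution bounded and then invokes Lemma~\ref{lemm:bounded_persistent2persistent}, whose proof is exactly your subsequence extraction producing a boundary point of $\omega(s_0)$. The only cosmetic difference is that you work directly with the eventual uniform bound $K$ rather than first stating that the whole trajectory is bounded.
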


\begin{proof}
Indeed, every solution of a dissipative reaction network is bounded. The conclusion then follows from Lemma \ref{lemm:bounded_persistent2persistent} {in Subsection \ref{app:prop2} in the appendix.}
\end{proof}

\begin{example}[Monomer-Dimer Toggle]\label{ex:dans}
Consider the monomer-dimer toggle model given by the reaction network
\begin{align}\label{eq:dans1}
		X_1 & \longrightarrow X_1 + P_1 & P_1 &  \longrightarrow 0  & X_2 + P_1& \longleftrightarrow X_2P_1   \\
		 X_2  & \longrightarrow X_2 + P_2 & P_2 &  \longrightarrow 0 & X_1 + P_2P_2 &  \longleftrightarrow X_1P_2P_2 & 2P_2 &\longleftrightarrow P_2P_2  \nonumber
\end{align}

The {leftmost} four reactions  model basal protein production and degradation. The $P_2P_2$ represents a dimeric species, while $X_2P_1$ and $X_1P_2P_2$ represent, respectively, monomers and dimers bound to gene promoters. See \cite[Page S1]{siegal-gaskins--franco--zhou--murray-2015} for further contextualization.

By removing the set of intermediates $\{X_2P_1, X_1P_2P_2\}$, 
we obtain the network
\begin{align}\label{eq:dans2}
X_1 & \longrightarrow X_1 + P_1 &  P_1 &  \longrightarrow 0   \\
X_2 &  \longrightarrow X_2 + P_2 & P_2 & \longrightarrow 0 &  2P_2 & \longleftrightarrow P_2P_2\,. \nonumber
\end{align}
Now $\{P_2P_2\}$ 
constitutes a set of intermediates of (\ref{eq:dans2}).
Its removal yields
\begin{equation}\label{eq:dans3}
		X_1 \longrightarrow X_1 + P_1 \quad\quad X_2 \longrightarrow X_2 + P_2 \quad\quad P_1 \longrightarrow 0 \quad\quad P_2 \longrightarrow 0
\end{equation}
Now $ \{X_1, X_2\}$
is a set of catalysts of (\ref{eq:dans3}). Their removal leaves us with
\begin{equation}\label{eq:dans4}
	P_1 \longleftrightarrow 0 \longleftrightarrow P_2\,.
\end{equation}
This is a non-conservative  strongly connected monomolecular network. {Thus, by Corollary~\ref{cor:intcat}, the network  (\ref{eq:dans1}) is bounded-persistent and does not have boundary steady states in any stoichiometric compatibility class that is not already contained in the boundary of the positive orthant.} 
Under mass-action kinetics, (\ref{eq:dans1}) is dissipative \cite[Pages S7--S8]{siegal-gaskins--franco--zhou--murray-2015}. Thus, it is also persistent by Corollary \ref{cor:dissipative2persistent}. 
\mydiamond	
\end{example}

\subsection{A Shuttling and Degradation Focused Wnt Model}\label{subsec:bss}

The following reaction network model for the Wnt pathway was proposed in \cite{maclean--rosen--byrne--harrington-2015}.
		\begin{align*}
			Y_a + X & \longleftrightarrow C_{YX} \longrightarrow Y_a & Y_{in} + P_n & \longleftrightarrow C_{YP_n} \longrightarrow Y_{an} + P_n  \\
			Y_i + P & \longleftrightarrow C_{YP} \longrightarrow Y_a + P  &Y_{an} + X_n & \longleftrightarrow C_{YX_n} \longrightarrow Y_{an} \\
			Y_{an} + D_{an} & \longleftrightarrow C_{YD_n} \longrightarrow Y_{in} + D_{an} & Y_a + D_a & \longleftrightarrow C_{YD} \longrightarrow Y_i + D_a \\
			0 & \longleftrightarrow X \longleftrightarrow X_n \longrightarrow 0  & Y_i & \longleftrightarrow Y_{in}  \\
			D_i & \longleftrightarrow D_a \longleftrightarrow D_{an} & X_n + T & \longleftrightarrow C_{XT}  
		\end{align*}
Note that $\{C_{YX}, C_{YP_n}, C_{YP}, C_{YX_n}, C_{YD_n}, C_{YD}, C_{XT}, D_i\}$ is a set of intermediates. Their removal yields the reaction network

	\noindent
	\begin{minipage}[c]{0.45\linewidth}
	\[
		\begin{aligned}
			Y_a + X & \longrightarrow Y_a \\
			Y_i + P & \longrightarrow Y_a + P \\
			Y_{an} + D_{an} & \longrightarrow Y_{in} + D_{an} \\
			0 \longleftrightarrow X & \longleftrightarrow X_n \longrightarrow 0 \\
			D_a & \longleftrightarrow D_{an}
		\end{aligned}
	\]
	\end{minipage}%
	\hfill
	\begin{minipage}[c]{0.45\linewidth}
	\[
		\begin{aligned}
			Y_{in} + P_n & \longrightarrow Y_{an} + P_n \\
			Y_{an} + X_n & \longrightarrow Y_{an} \\
			Y_a + D_a & \longrightarrow Y_i + D_a \\
			Y_i & \longleftrightarrow Y_{in}
		\end{aligned}
	\]
	\end{minipage}
	
	\vskip 1ex

\noindent Now $\{D_a, D_{an}, P_n, P\}$ constitutes a set of catalysts. After their removal, we obtain the reaction network
\begin{align*}
Y_a + X &  \longrightarrow Y_a & Y_a &  \longleftrightarrow Y_i \longleftrightarrow Y_{in} \longleftrightarrow Y_{an} \\
Y_{an} + X_n & \longrightarrow Y_{an} & 	0 & \longleftrightarrow X \longleftrightarrow X_n \longrightarrow 0
\end{align*}
We may now remove $\{Y_i, Y_{in}\}$ as a set of intermediates, and then remove $\{Y_a, Y_{an}\}$ as a set of catalysts, thus obtaining
\[
	0 \longleftrightarrow X \longleftrightarrow X_n \longrightarrow 0\,.
\]
This is a non-conservative strongly connected monomolecular network. 
{Thus, by Corollary~\ref{cor:intcat}, the network  (\ref{eq:dans1}) is bounded-persistent and does not have boundary steady states in any stoichiometric compatibility class that is not already contained in the boundary of the positive orthant.} 
\mydiamond

\section{Proofs of Theorems \ref{thm:main}, \ref{thm:main2} and \ref{thm:primitive}}\label{sec:proofofmainresult}

In Subsection \ref{subsec:intermediates_proof} we prove Theorem \ref{thm:main}, items ({\em i}\,), ({\em iii}\,) and ({\em iv}\,). 
Likewise, the proof of Theorem \ref{thm:main2}, items   ({\em i}\,), ({\em iii}\,) and ({\em iv}\,) is carried out in Subsection \ref{subsec:catalysts_proof}.
Item ({\em ii}\,) in each result 
follows from ({\em i}\,) in the same result by Corollary~\ref{cor:equivcrit}.
{The proof of Theorem \ref{thm:primitive} is worked out in Subsection \ref{subsec:unique_primitive_reduction}.}

{%
We begin with a general fact about reaction networks. Let $G = (\calS, \calC, \calR)$ be a reaction network, and let $(\calC_1, \calR_1), \ldots, (\calC_J, \calR_J)$ be the connected components of its reaction graph $(\calC, \calR)$.
\begin{lemma}\label{lemm:components1}
	For each $j \in [J]$, $y' - y \in \Gamma\,$ for all $y, y' \in \calC_j\,$.
\end{lemma}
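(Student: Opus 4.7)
The plan is to exploit the fact that $\Gamma$ is a linear subspace containing every vector of the form $y' - y$ with $y \to y'$ a reaction of $G$, together with the fact that complexes in a single connected component are linked by undirected reaction paths.

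First I would recall that by definition, for each reaction $R_j \colon y \to y' \in \calR$, the $j$-th column of the stoichiometric matrix $N$ is exactly $y' - y$, so $y' - y \in \Gamma$. Since $\Gamma$ is a linear subspace of $\r^n$, it is closed under negation and finite linear combinations; in particular $y - y' = -(y' - y) \in \Gamma$ as well.

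Next, fix $j \in [J]$ and $y, y' \in \calC_j$. By definition of connected component, there exists an undirected reaction path in $(\calC_j, \calR_j)$ of the form
\[
    y = y_0 \ \textrm{---} \ y_1 \ \textrm{---} \ \cdots \ \textrm{---} \ y_k = y',
\]
where each `---' is either `$\to$' or `$\leftarrow$' in $(\calC, \calR)$. Writing the telescoping sum
\[
    y' - y = \sum_{i = 1}^k (y_i - y_{i-1}),
\]
each summand $y_i - y_{i-1}$ is of the form $\pm(z' - z)$ for some reaction $z \to z' \in \calR$: if the $i$-th step is $y_{i-1} \to y_i$, the sign is $+$; if it is $y_{i-1} \leftarrow y_i$, the sign is $-$. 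By the previous paragraph, each summand lies in $\Gamma$, and hence so does $y' - y$.

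Since there is essentially nothing subtle here, the only point that needs care is the distinction between the reaction graph (a directed graph) and its underlying undirected graph; connectedness of a component only provides an undirected path, which is why we must allow both signs in the telescoping sum. There is no real obstacle, just this bookkeeping.
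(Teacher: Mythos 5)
Your proof is correct and follows essentially the same route as the paper: take an undirected reaction path between $y$ and $y'$ guaranteed by connectedness, telescope the difference $y'-y$ into consecutive steps, and note that each step is $\pm$ a column of $N$ and hence lies in the subspace $\Gamma$. The sign bookkeeping you highlight is exactly the (only) point the paper's proof also handles.
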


\begin{proof}
	Since $y$ and $y'$ are in the same connected component of $(\calC, \calR)$, there exists an undirected reaction path
$y \ \textrm{---} \ y_1 \ \textrm{---} \ \cdots \ \textrm{---} \ y_k \ \textrm{---} \ y'$
	in $(\calC, \calR)$ connecting $y$ and $y'$. 
	Now
	\[
		y' - y = (y' - y_1) + \sum_{i = 2}^{k} (y_{i - 1} - y_i) + (y_k - y)\in \Gamma \,,
	\]
	establishing the lemma.
\end{proof}}

\subsection{Intermediates}\label{subsec:intermediates_proof}

Now suppose $G^* = (\calS^*, \calC^*, \calR^*)$ is the reduction of $G$ by the removal of a set of intermediates $\calY$. Recall that $\calS^*$ does not always agree with $\calS \backslash \calY$. Let
\[
	\calX := (\calS \backslash \calY) \backslash \calS^*\,,
\]
and write
\[
	\calX = \{X_1, \ldots, X_\ell\}\,,
\qquad\text{and}\qquad
	\calS^* = \{S^*_1, \ldots, S^*_n\}\,.
\]
Thus,
\[
	\calS = \calS^* \cup \calX \cup \calY = \{S^*_1, \ldots, S^*_n, X_1, \ldots, X_\ell, Y_1, \ldots, Y_p\}\,.
\]
This is the ordering we shall assume whenever working with the stoichiometric matrix or the stoichiometric subspace of $G$. Given a complex
\[
	y 
	= (\alpha_1, \ldots, \alpha_n, \gamma_1, \ldots, \gamma_\ell, \beta_1, \ldots, \beta_p)
	= \sum_{i = 1}^{n} \alpha_iS^*_i + \sum_{i = 1}^{\ell} \gamma_iX_i + \sum_{i = 1}^{p} \beta_iY_i\,,
\]
in $\calC$, we will denote its projection over the first $n$ coordinates by
\begin{equation}\label{eq:pry}
	\pry := (\alpha_1, \ldots, \alpha_n) = \sum_{i = 1}^{n} \alpha_iS^*_i\,.
\end{equation}
Conversely, given a complex $\pry$ in $\mathbb{R}^n_{\geq 0}$ as in \eqref{eq:pry}, 
we denote its embedding in $\r^{n + \ell + p}$ by
\[
	y := (\alpha_1, \ldots, \alpha_n, 0, \ldots, 0) = \sum_{i = 1}^{n} \alpha_iS^*_i\,.
\]

\begin{lemma}\label{lemm:components2}
	For each $j \in [J]$, if $y, y' \in \calC_j \backslash \calY$, then $\pry$ and $\pry'$ are in the same connected component of $(\calC^*, \calR^*)$.
\end{lemma}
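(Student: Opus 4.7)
The plan is to walk along an undirected path from $y$ to $y'$ in $(\calC, \calR)$ and show that its projection onto $\calS^*$ stays within a single connected component of $(\calC^*, \calR^*)$. Since $y, y' \in \calC_j$ there exists an undirected path
$$y = z_0 \ \textrm{---} \ z_1 \ \textrm{---} \ \cdots \ \textrm{---} \ z_k = y',$$
and I would induct on $k$. The case $k = 1$ is immediate since the edge joins two non-intermediates, hence lies in $\calR^*_c \subseteq \calR^*$, placing $\pry$ and $\pry'$ in the same component. For $k \geqslant 2$, if some interior vertex $z_i$ (with $0 < i < k$) belongs to $\calC \backslash \calY$, I would split the path at $z_i$ and apply the induction hypothesis to each half, reducing matters to the case where every interior vertex is an intermediate.

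The substantive step is then to associate to each intermediate a connected component of $(\calC^*, \calR^*)$. For $Y \in \calY$ I would introduce
\begin{align*}
	\text{In}(Y) &:= \{u \in \calC \backslash \calY \,:\, \exists\ \text{directed path}\ u \to \cdots \to Y\ \text{with all non-endpoints in}\ \calY\}, \\
	\text{Out}(Y) &:= \{u \in \calC \backslash \calY \,:\, \exists\ \text{directed path}\ Y \to \cdots \to u\ \text{with all non-endpoints in}\ \calY\}.
\end{align*}
Both sets are nonempty by (I2), and concatenating an In-path with an Out-path yields $u \to u' \in \calR^*_Y$ whenever $u \in \text{In}(Y)$, $u' \in \text{Out}(Y)$, and $u \neq u'$. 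A short case analysis will then confirm that the projections $\widehat{u}$ for $u \in \text{In}(Y) \cup \text{Out}(Y)$ all lie in a common connected component $C(Y)$ of $(\calC^*, \calR^*)$.

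Next I would verify that $C(Y)$ is invariant along undirected edges within the intermediate subgraph: if $Y_1 \to Y_2 \in \calR$ with both endpoints in $\calY$, prepending this edge to any directed path terminating at $Y_1$ yields $\text{In}(Y_1) \subseteq \text{In}(Y_2)$, so the two components $C(Y_1)$ and $C(Y_2)$ share a witness and must coincide; reading the edge in reverse gives the same conclusion. Iterating along $z_1 \textrm{---} \cdots \textrm{---} z_{k-1}$ produces $C(z_1) = \cdots = C(z_{k-1})$. Finally, $z_0$ is directly adjacent to $z_1$, so depending on the orientation of that edge, $z_0 \in \text{In}(z_1) \cup \text{Out}(z_1)$ and hence $\pry \in C(z_1)$; symmetrically $\pry' \in C(z_{k-1}) = C(z_1)$, proving the lemma.

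The proof is almost entirely graph-theoretic bookkeeping; the only subtlety I anticipate is the degenerate configuration in which a single non-intermediate serves as both an In- and Out-witness for some intermediate, so that the would-be reaction $u \to u$ is a forbidden self-loop and is absent from $\calR^*_Y$. A direct case analysis, in the spirit of the $k = 2$ version of the internal claim, will show that such coincidences only shorten the witnessing walk in $(\calC^*, \calR^*)$ rather than obstruct it.
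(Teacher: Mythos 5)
Your argument is correct, but it is organized quite differently from the paper's. The paper also reduces to segments of the undirected path whose interior vertices are all intermediates, but it then performs surgery on the path itself: at every vertex where the arrows switch direction it invokes (I2) to splice in a directed escape route to a non-intermediate complex, cutting the segment into finitely many genuinely directed reaction paths (after deleting any loops the splicing creates), each of which projects to a single reaction of $\calR^*_Y$ or $\calR^*_c$. You instead leave the path untouched and attach to each intermediate $Y$ the connected component $C(Y)$ of $(\calC^*, \calR^*)$ determined by its entry and exit witnesses, then show $C$ is locally constant along edges of the intermediate subgraph via the inclusions $\mathrm{In}(Y_1) \subseteq \mathrm{In}(Y_2)$ and $\mathrm{Out}(Y_2) \subseteq \mathrm{Out}(Y_1)$ for an edge $Y_1 \rightarrow Y_2$ (truncating at the first repeated complex when appending an edge would violate the requirement that a reaction path have distinct complexes). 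Your invariant-based route avoids the paper's somewhat delicate rerouting and loop-removal, at the price of the degenerate case you flag at the end: when $\mathrm{In}(Y) = \mathrm{Out}(Y) = \{u\}$, the intermediate $Y$ produces no reaction of $\calR^*_Y$ on its own, so you must still justify that $\widehat{u}$ lies in $\calC^*$ and that $C(Y)$ is well defined. That case does close as you predict: propagating the monotone inclusions along the chain toward the two distinct endpoints $y \neq y'$ forces the witness set to acquire a second element at some intermediate while still retaining $u$ on the appropriate side, which yields a reaction of $\calR^*_Y$ involving $u$ and identifies all the relevant components. Both proofs ultimately rest on the same use of (I2); yours trades explicit path manipulation for a locally constant invariant, which is arguably cleaner once the singleton-witness case is written out.
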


\begin{proof} If  $y \neq y'$, then there exists an undirected {reaction} path
	\[
		y \ \textrm{---} \ y_1 \ \textrm{---} \ \cdots \ \textrm{---} \ y_k \ \textrm{---} \ y'
	\]
	in $(\calC, \calR)$ connecting $y$ and $y'$.
	Let $i_1, \ldots, i_d \in [k]$ be the indices such that $y_{i_1}, \ldots, y_{i_d} \in \calC \backslash \calY$, so that each non-endpoint in each of the paths
	\begin{equation}\label{eq:last}
		\begin{aligned}
			y \ \textrm{---} \ y_1 \ \textrm{---} \ &\cdots \ \textrm{---} \ y_{i_1 - 1} \ \textrm{---} \ y_{i_1}\,, \\
			y_{i_1} \ \textrm{---} \ y_{i_1 + 1} \ \textrm{---} \ &\cdots \ \textrm{---} \ y_{i_2 - 1} \ \textrm{---} \ y_{i_2}\,, \\
			&\hspace{0.625em} \vdots \\
			y_{i_d} \ \textrm{---} \ y_{i_d + 1} \ \textrm{---} \ &\cdots  \ \textrm{---} \ y_k \ \textrm{---} \ y'\,, 
		\end{aligned}
	\end{equation}
	is an intermediate. 
		We may assume without loss of generality that, within each path, all arrows point in the same direction. To see this, suppose that this is not the case for, say, the first path. Suppose $y \rightarrow y_1$, and let ${q_1} \in [i_1 - 1]$ be the index corresponding to the first (intermediate) complex where the arrows switch directions. So we have
	$$	y \longrightarrow y_{1} \longrightarrow \cdots \longrightarrow y_{q_1} \longleftarrow y_{q_1 + 1} \cdots\,.
	$$
	
	By (I2), there exists $y^{(1)} \in \calC \backslash \calY$, and $Y^{(1)}, \ldots, Y^{(p_1)} \in \calY$ such that,
	\[
		y_{q_1} \longrightarrow Y^{(1)} \longrightarrow \cdots \longrightarrow Y^{(p_1)} \longrightarrow y^{(1)}
	\]
	is a reaction path in $G$. 
	We may then split the first path in (\ref{eq:last}) into the two paths
\begin{align*}
		y  \longrightarrow y_1 \longrightarrow \cdots \longrightarrow y_{q_1} \longrightarrow Y^{(1)} \longrightarrow \cdots \longrightarrow Y^{(p_1)} \longrightarrow y^{(1)}\,, \\
		y^{(1)}  \longleftarrow Y^{(p_1)} \longleftarrow \cdots \longleftarrow Y^{(1)} \longleftarrow y_{q_1} \longleftarrow y_{q_1 + 1} \cdots\,,
\end{align*}
	{where in each path we remove any loops starting and ending at an intermediate that might have been created.}
	If there are other changes of direction between $y_{q_1}$ and $y_{i_1}$, we may employ the same construction as many times as needed.
	If $y \leftarrow y_1$ instead, the argument is analogous, and the same construction applies also to any other path not having the property that all arrows point in the same direction.
	
This construction gives an undirected {reaction} path in $(\calC^*, \calR^*)$:
	\[
		\pry \ \textrm{---} \ \pry_{i_1} \ \textrm{---} \ \cdots \ \textrm{---} \ \pry_{i_d} \ \textrm{---} \ \pry'.
	\]
We conclude that $\pry$ and $\pry'$ are in the same connected component of $(\calC^*, \calR^*)$.
\end{proof}

\begin{remark}\label{rem:clarification}
{We have that $\supp y \cap \calX  = \varnothing$ whenever the connected component of $G$ that contains $y$ contains at least one more non-intermediate complex $y'$.}
\mybox
\end{remark}

\subsubsection*{Conservation Laws}

We prove here Theorem~\ref{thm:main}\olr{iv}.  
In what follows, $\Gamma^* \subseteq \r^n$ is the stoichiometric subspace of $G^*$. Thus, its orthogonal complement $(\Gamma^*)^\perp$ is taken in $\r^n$.

\begin{lemma}\label{lemm:a}
	For each $j \in [J]$,
	\[
		(\omega^*, x, 0) \cdot y = (\omega^*, x, 0) \cdot y'\,,
		\quad
		\forall y, y' \in \calC_j \backslash \calY\,,
		\quad
		\forall
		(\omega^*, x) \in (\Gamma^*)^\perp \times \r^\ell\,.
	\]	
\end{lemma}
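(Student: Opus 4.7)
The plan is to reduce everything to a dot product against $\pry - \pry'$ in $\r^n$ and then invoke the hypothesis that $\omega^*$ lies in $(\Gamma^*)^\perp$. The main observation is that for a complex $y \in \calC \setminus \calY$, the last $p$ coordinates (those indexed by $\calY$) must vanish: by (I1), any complex in $\calC$ is either of the form $\sum_{i=1}^q \alpha_i S_i$ with $S_i \in \calS \setminus \calY$ or is a single intermediate species $Y_i$. Since $y \notin \calY$, we are in the former case, so $\beta_i = 0$ for all $i$. The same applies to $y'$, so the ``$0$'' block of the vector $(\omega^*, x, 0)$ already eliminates those coordinates and the dot product depends only on the first $n + \ell$ entries.

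Next I would split into two cases. If $y = y'$, there is nothing to show. If $y \neq y'$, then by Remark~\ref{rem:clarification}, both $\supp y \cap \calX = \varnothing$ and $\supp y' \cap \calX = \varnothing$, so the $\calX$-coordinates $\gamma_i$ and $\gamma'_i$ of $y$ and $y'$ also vanish. Hence
\[
    (\omega^*, x, 0) \cdot y = \omega^* \cdot \pry \quad \text{and} \quad (\omega^*, x, 0) \cdot y' = \omega^* \cdot \pry',
\]
where $\pry, \pry'$ are the projections defined in \eqref{eq:pry}.

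It remains to check that $\omega^* \cdot \pry = \omega^* \cdot \pry'$. By Lemma~\ref{lemm:components2}, the projected complexes $\pry$ and $\pry'$ lie in the same connected component of the reaction graph $(\calC^*, \calR^*)$ of $G^*$. Applying Lemma~\ref{lemm:components1} to $G^*$, we deduce that $\pry' - \pry \in \Gamma^*$. Since $\omega^* \in (\Gamma^*)^\perp$ by hypothesis, it follows that $\omega^* \cdot (\pry' - \pry) = 0$, completing the proof.

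The argument is largely bookkeeping, so there is no truly hard step; the one place where care is needed is invoking Remark~\ref{rem:clarification} correctly, namely making sure we only use it when $y \neq y'$ so that each of them has a distinct non-intermediate partner in its connected component, forcing the $\calX$-coordinates to be zero. Everything else reduces to applying the already-established lemmas.
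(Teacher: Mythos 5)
Your proof is correct and follows essentially the same route as the paper's: reduce to the projections via Remark~\ref{rem:clarification} (and the vanishing of the $\calY$-coordinates from (I1)), then combine Lemma~\ref{lemm:components2} with Lemma~\ref{lemm:components1} applied to $G^*$ to get $\pry - \pry' \in \Gamma^*$ and conclude by orthogonality. The only difference is that you spell out the bookkeeping for the $\calX$- and $\calY$-blocks more explicitly than the paper does.
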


\begin{proof}
	Fix arbitrarily $\omega^* \in (\Gamma^*)^\perp$, $x \in \r^\ell$, $j \in [J]$, and ${y,y'} \in \calC_j \backslash \calY$. The equality is trivial if $y = y'$, so, assume $y \neq y'$.
	By Lemma \ref{lemm:components2}, $\pry$ and $\pry'$ are in the same connected component of $(\calC^*, \calR^*)$. By Lemma \ref{lemm:components1}, we conclude that $\pry - \pry' \in \Gamma^*$. 
	 {In view of Remark \ref{rem:clarification}, we have now
		\[
			(\omega^*, x, 0) \cdot (y - y') = \omega^* \cdot (\pry - \pry') =0\,,
		\]}
	completing the proof.
\end{proof}

For each $j \in [J]$, fix arbitrarily a complex $y_j \in \calC_j \backslash \calY$. Property (I2) in the definition of intermediates ensures that $\calC_j \backslash \calY$ is always nonempty. For each $i \in [p]$, let $j_i \in [J]$ be the index uniquely defined by the property that $Y_i \in \calC_{j_i}$. Define
\[
	\begin{array}{rcl}
		a\colon (\Gamma^*)^\perp \times \r^\ell & \longrightarrow & \r^{p} \\[1ex]
		(\omega^*, x) & \longmapsto & ((\omega^*, x, 0) \cdot y_{j_1}, \ldots, (\omega^*, x,0 ) \cdot y_{j_p}).
	\end{array}
\]
Note that, by Lemma \ref{lemm:a}, $a$ is independent of the chosen representatives $y_j \in \calC_j \backslash \calY$, $j \in [J]$.

\begin{lemma}\label{lemm:intermediatecls1og2}
	Suppose $G^* = (\calS^*, \calC^*, \calR^*)$ is the reduction of $G = (\calS, \calC, \calR)$ by the removal of a set of intermediates $\calY$. Then
	\[
		\Gamma^\perp = {\{(\omega^*, x, a(\omega^*,x))\,|\ (\omega^*, x) \in (\Gamma^*)^\perp \times \r^\ell\}}\,.
	\] 
\end{lemma}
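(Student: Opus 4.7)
The plan is to prove the two inclusions separately, exploiting the decomposition $\calS = \calS^* \cup \calX \cup \calY$ and the way reactions in $\calR$ interact with the set of intermediates $\calY$. In both directions the core idea is to reduce every inner product with $\omega$ to an evaluation of $(\omega^*, x, 0)$ against a non-intermediate complex, so that Lemma \ref{lemm:a} can be used to make the value depend only on the connected component.

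For the $\supseteq$ inclusion, I would fix $(\omega^*, x) \in (\Gamma^*)^\perp \times \r^\ell$, set $\omega := (\omega^*, x, a(\omega^*, x))$, and check $\omega \cdot (y' - y) = 0$ for every $y \to y' \in \calR$, splitting into cases according to how many of the endpoints lie in $\calY$. If neither does, the reaction belongs to $\calR^*_c$, the connected component of $G$ containing them has at least two non-intermediate complexes, Remark \ref{rem:clarification} forces the $\calX$-coordinates of both $y$ and $y'$ to vanish, and the computation reduces to $\omega^* \cdot (\pry' - \pry) = 0$. If exactly one endpoint is an intermediate $Y_i$, then the contribution $a_i(\omega^*, x) = (\omega^*, x, 0) \cdot y_{j_i}$ from the intermediate coordinate matches, via Lemma \ref{lemm:a}, the value of $(\omega^*, x, 0)$ dotted against the other, non-intermediate endpoint, which also lies in $\calC_{j_i} \setminus \calY$. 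If both endpoints are intermediates, they share a component, so their $a$-entries coincide and the inner product vanishes.

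For the $\subseteq$ inclusion, I would fix $\omega \in \Gamma^\perp$ and write $\omega = (\omega^*, x, b)$. First I would show $\omega^* \in (\Gamma^*)^\perp$ by checking orthogonality against every reaction of $G^*$: such a reaction is either inherited from $\calR$, in which case Remark \ref{rem:clarification} allows the inner product to be reduced to $\omega^* \cdot (\pry' - \pry)$ as above, or it collapses a reaction path $y \to Y^{(1)} \to \cdots \to Y^{(k)} \to y'$ through intermediates, in which case telescoping gives $y' - y \in \Gamma$ and the same reduction applies. I would then pin down $b_i$ for each $i \in [p]$ using property (I2) to produce a reaction path from some $y \in \calC_{j_i} \setminus \calY$ to $Y_i$; telescoping yields $Y_i - y \in \Gamma$, hence $b_i = \omega \cdot Y_i = \omega \cdot y = (\omega^*, x, 0) \cdot y$, and Lemma \ref{lemm:a} (now applicable) replaces $y$ by $y_{j_i}$ to give $b_i = a_i(\omega^*, x)$. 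The main obstacle is bookkeeping: carefully tracking when a complex of $\calC$ can have nonzero $\calX$-coordinates (ruled out by Remark \ref{rem:clarification} whenever the component contains another non-intermediate), and ordering the $\subseteq$ argument so that Lemma \ref{lemm:a} is only invoked once $\omega^* \in (\Gamma^*)^\perp$ is in hand.
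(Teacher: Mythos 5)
Your proposal is correct, and the $\supseteq$ inclusion coincides with the paper's argument: the same four-way case split on whether zero, one, or two endpoints of a reaction lie in $\calY$, with Lemma \ref{lemm:a} handling the cases involving an intermediate. Where you genuinely diverge is the $\subseteq$ inclusion. The paper does not verify it element-wise; instead it observes that both sides are linear subspaces (since $a$ is linear), and after establishing $\supseteq$ it closes the gap by a dimension count: it shows $\dim \Gamma \geqslant \dim \Gamma^* + p$ by exhibiting, for each $\pry \rightarrow \pry' \in \calR^*$, the vector $y' - y \in \Gamma$, and for each $Y_i \in \calY$ a vector $Y_i - y^{(i)} \in \Gamma$ with $y^{(i)} \in \calC \setminus \calY$, noting that the latter $p$ vectors are linearly independent of each other and of anything supported on $\calS^* \cup \calX$; then $\dim \Gamma^\perp \leqslant \dim(\Gamma^*)^\perp + \ell$ forces equality. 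Your route instead takes an arbitrary $\omega = (\omega^*, x, b) \in \Gamma^\perp$, shows $\omega^* \in (\Gamma^*)^\perp$ by pairing $\omega$ against $y' - y \in \Gamma$ for each reaction of $G^*$ (telescoping along the collapsed paths for reactions in $\calR^*_Y$), and then pins down $b_i = a_i(\omega^*, x)$ via $Y_i - y \in \Gamma$ and Lemma \ref{lemm:a}. Both are sound; yours is more elementary and self-contained in that it avoids the linear-independence bookkeeping and identifies the $\calY$-coordinates of $\omega$ explicitly, while the paper's count is shorter once $\supseteq$ is in hand and additionally yields the dimension formula $\dim \Gamma^\perp = \dim(\Gamma^*)^\perp + \ell$ as a byproduct. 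Your care in deferring the use of Lemma \ref{lemm:a} until $\omega^* \in (\Gamma^*)^\perp$ is established, and in tracking when the $\calX$-coordinates of a complex can be nonzero, addresses the only two points where a direct argument could otherwise go wrong.
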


\begin{proof}
	(I) We first show the inclusion $\supseteq$.
	To this end, fix arbitrarily $\omega^* \in (\Gamma^*)^\perp$, and $x \in \r^\ell$. Denote
	\[
		\omega := (\omega^*, x, a(\omega^*,x))\,.
	\]
	Fix arbitrarily $y \rightarrow y' \in \calR$. We want to show that
	\[
		\omega \cdot (y' - y) = 0\,.
	\]
	There are four possibilities.
	
	(1) If $\pry \rightarrow \pry' \in \calR^*$, then
	\[
		\omega \cdot (y' - y) = \omega^* \cdot (\pry' - \pry) = 0\,.
	\]
	
	(2) If $y \rightarrow y' = y \rightarrow Y_i$ for some $Y_i \in \calY$, and some $y \in \calC \backslash \calY$, then
	\[
		\omega \cdot (y' - y)
		= \omega \cdot (Y_i - y) 
		= (\omega^*, x, 0) \cdot y_{j_i} - (\omega^*, x, 0) \cdot y 
		= 0\,,
	\]
	{by Lemma \ref{lemm:a}}, since $y$ and $y_{j_i}$ belong to the same connected component of $(\calC, \calR)$.
	
	(3) If $y \rightarrow y' = Y_i \rightarrow y'$ for some $Y_i \in \calY$, and $y \in \calC \backslash \calY$, then the argument is the same as in (2).
	
	(4) If $y \rightarrow y' = Y_i \rightarrow Y_k$ for some $Y_i, Y_k \in \calY$, then $j_i = j_k$, and so
	\[
		\omega \cdot (y' - y)
		= \omega \cdot (Y_k - Y_i) = (\omega^*, x, 0) \cdot y_{j_i} - (\omega^*, x, 0) \cdot y_{j_k} = 0\,.
	\]
	
	This establishes  the inclusion $\supseteq$. In particular,
	\[
		\dim ((\Gamma^*)^\perp \times \r^\ell) = \dim (\Gamma^*)^\perp + \ell \leqslant \dim \Gamma^\perp\,.
	\]
	
	(II) To finish the proof, it is now enough to show that
	\begin{equation}\label{eq:converse1}
		\dim \Gamma^\perp \leqslant \dim (\Gamma^*)^\perp + \ell\,.
	\end{equation}
	We claim that
	\begin{equation}\label{eq:converse2}
		\dim \Gamma \geqslant \dim \Gamma^* + p\,.
	\end{equation}
	On the one hand, for each reaction $\pry \rightarrow \pry' \in \calR^*$, there exists a reaction path in $G$ connecting $y$ to $y'$, so $y$ and $y'$ are in the same connected component of $(\calC, \calR)$. It follows by Lemma \ref{lemm:components1} that $y' - y \in \Gamma$. On the other hand, for each intermediate $Y_i \in \calY$, there exists a $y^{(i)} \in \calC \backslash \calY$ and a reaction path in $G$ connecting $Y_i$ to $y^{(i)}$. Again by Lemma \ref{lemm:components1}, we have $Y_i - y^{(i)} \in \Gamma$. Furthermore, $Y_1 - y^{(1)}, \ldots, Y_p - y^{(p)}, (y' - y)$ are linearly independent for each $\pry \rightarrow \pry' \in \calR^*$. This gives us (\ref{eq:converse2}).
	
	Combining $\dim \Gamma + \dim \Gamma^\perp = \dim \r^{n + \ell + p}$ and (\ref{eq:converse2}), we get
	\[
		\dim \Gamma^\perp = n + \ell + p - \dim \Gamma \leqslant n - \dim \Gamma^* + \ell = \dim (\Gamma^*)^\perp + \ell\,.
	\]
	This establishes (\ref{eq:converse1}), completing the proof.
\end{proof}

\noindent {\textit{Proof of Theorem  \ref{thm:main}\ilr{iv}.}}
	{($\Rightarrow$) If $\omega = (\omega^*, x, a(\omega^*, x))$ is a strictly positive conservation law of $G$, then $\omega^*$ is a strictly positive conservation law of $G^*$.}
	
	{($\Leftarrow)$ If $\omega^*$ is a strictly positive conservation law of $G^*$, then choose any $x\in \mathbb{R}^\ell_{>0}$.  It holds that $(\omega^*, x, 0) \cdot y_{j_i}>0$ for all $i\in [p]$ since {$y_{j_i} \neq 0$ and} the support of $y_{j_i}$ is included in the support of $(\omega^*,x)$. 	Then $\omega = (\omega^*, x, a(\omega^*, x))$ is a strictly positive conservation law of $G$ by Lemma~\ref{lemm:intermediatecls1og2}.}
	\mybox

\subsubsection*{Siphons}

\begin{lemma}\label{lemm:minsiphons1}
	Suppose $G^* = (\calS^*, \calC^*, \calR^*)$ is the reduction of $G = (\calS, \calC, \calR)$ by the removal of a set of intermediates $\calY$. If $\Sigma$ is a siphon of $G$, then
	\[
		\Sigma^* := \Sigma \cap \calS^*
	\]
	is either the empty set, or a siphon of $G^*$. Furthermore, if $\Sigma^*$ is empty, then $\Sigma \cap \calX$ is nonempty.

\end{lemma}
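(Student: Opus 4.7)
The plan is to verify the two claims separately, using the definition of $\calR^*$ (reactions of $G^*$ come either directly from $\calR \setminus \{y \to y'\colon y \in \calY \text{ or } y' \in \calY\}$ or from compressed intermediate paths), together with the key observation that $\supp y \subseteq \calS^*$ for every $y \in \calC^*$ (by definition of $\calS^*$ as the set of species appearing in complexes of $\calC^*$).

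For the first claim, assume $\Sigma^* \neq \varnothing$ and take a reaction $\pry \to \pry' \in \calR^*$ that has a product in $\Sigma^*$. I consider two cases. If $\pry \to \pry'$ already belongs to $\calR$ (with $y,y' \in \calC \setminus \calY$), then this product lies in $\Sigma \subseteq \calS$, and since $\Sigma$ is a siphon of $G$ the reactant side $y$ must have some species in $\Sigma$; because $\supp y \subseteq \calS^*$, that species lies in $\Sigma^*$, as needed. Otherwise $\pry \to \pry'$ arose from a compressed reaction path
\[
y \longrightarrow Y^{(1)} \longrightarrow \cdots \longrightarrow Y^{(\ell)} \longrightarrow y'
\]
in $G$ with each $Y^{(i)} \in \calY$. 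Since $y'$ still has its product in $\Sigma$, Remark~\ref{rk:siphonpath} forces each of $y, Y^{(1)}, \ldots, Y^{(\ell)}$ to have a species in $\Sigma$; applying this to $y$, whose support is contained in $\calS^*$, gives a species in $\Sigma \cap \calS^* = \Sigma^*$. Thus $\Sigma^*$ is a siphon of $G^*$.

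For the second claim, I argue by contradiction. Suppose $\Sigma^* = \varnothing$ and, on top of that, $\Sigma \cap \calX = \varnothing$. Since $\calS = \calS^* \cup \calX \cup \calY$ is a (disjoint) union, this forces $\Sigma \subseteq \calY$. Because $\Sigma$ is nonempty, pick any $Y \in \Sigma$. By (I2) there exists $y \in \calC \setminus \calY$ and a reaction path from $y$ to $Y$ whose intermediate nodes are all in $\calY$. Applying Remark~\ref{rk:siphonpath} to the siphon $\Sigma$ and the endpoint $Y \in \Sigma$, the complex $y$ must contain a species of $\Sigma$. But the support of the non-intermediate complex $y$ is contained in $\calS \setminus \calY$, which is disjoint from $\calY \supseteq \Sigma$, a contradiction.

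I do not expect any step here to be genuinely hard; the main point to execute carefully is the bookkeeping of which reactions of $\calR^*$ come from which paths in $\calR$, so that Remark~\ref{rk:siphonpath} can be invoked legitimately. The mild subtlety is remembering that $\calS^*$ can be strictly smaller than $\calS \setminus \calY$, which is precisely why $\calX$ enters the second claim: species in $\calX$ can be trapped in a siphon of $G$ without any trace in $\calS^*$.
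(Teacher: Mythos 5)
Your proof is correct and follows essentially the same route as the paper: for the first claim the paper likewise traces the (possibly compressed) reaction path back to the reactant complex $y$ via the siphon property and uses $\supp \pry \subseteq \calS^*$, and for the second claim it gives the same (I2)-based argument, phrased directly rather than by contradiction. The case split you make explicit between $\calR^*_c$ and $\calR^*_Y$ is handled in the paper by the single observation that a reaction of $\calR^*$ always corresponds to a reaction path in $G$.
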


\begin{proof}
	First suppose $\Sigma^* \neq \varnothing$. Pick any $S' \in \Sigma^*$, and let $\pry \rightarrow \pry' \in \calR^*$ be any reaction having $S'$ as one of its products. Then there exists a reaction path in $G$ connecting $y$ and $y'$. Since $\Sigma$ is a siphon of $G$,  some species $S$ constituting $y$ belongs to $\Sigma$. Since $\pry \in \calC^*$, we must have $S \in \Sigma^*$. Thus, $\Sigma^*$ is a siphon of $G^*$.
	
	Now suppose $\Sigma^* = \varnothing$. Since $\Sigma \neq \varnothing$ and $\calS = \calS^* \cup \calX \cup \calY$, we must have $\Sigma \cap \calX \neq \varnothing$ or $\Sigma \cap \calY \neq \varnothing$. If $\Sigma \cap \calX \neq \varnothing$, then we have nothing left to prove. So, assume $\Sigma \cap \calY \neq \varnothing$, and fix arbitrarily a $Y \in \Sigma \cap \calY$. By (I2), there exist $y \in \calC \backslash \calY$ and a reaction path in $G$ connecting $y$ to $Y$, and we conclude that one of the species in $y$ belongs to $\Sigma$. Since $y$ is supported in $\calS^* \cup \calX$, and since $\Sigma \cap \calS^* = \varnothing$ by hypothesis, we conclude that $\Sigma \cap \calX \neq \varnothing$.
\end{proof}

Given a subset $\Sigma \subseteq \calS$, we define $M(\Sigma)$ to be the subset of intermediates $Y \in \calY$ that appear in a reaction path
	\[
		Y \longrightarrow Y^{(1)} \longrightarrow \cdots \longrightarrow Y^{(k)} \longrightarrow y'
	\]
	for some $y'$ whose support intersects $\Sigma$, and some $Y^{(1)}, \ldots, Y^{(k)} \in \calY$.
{Note that if $\Sigma$ is a siphon of $G$, then $M(\Sigma)\subseteq \Sigma$ by Remark~\ref{rk:siphonpath}.}

\begin{lemma}\label{lemm:minsiphons2}
	Suppose $G^* = (\calS^*, \calC^*, \calR^*)$ is the reduction of $G = (\calS, \calC, \calR)$ by the removal of a set of intermediates $\calY$. If $\Sigma^*$ is a siphon of $G^*$, then
	\[
		\Sigma := \Sigma^* \cup \intclr(\Sigma^*)
	\]
	is a siphon of $G$. Furthermore, any siphon of $G$ containing $\Sigma^*$ must also contain $M(\Sigma^*)$.
\end{lemma}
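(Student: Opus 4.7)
The plan is to verify the siphon property of $\Sigma := \Sigma^* \cup \intclr(\Sigma^*)$ in $G$ by direct case analysis on any reaction $y \rightarrow y' \in \calR$ whose product has a species in $\Sigma$, showing that $y$ must also have one. The key simplifying observation is that $\calS$ decomposes as the disjoint union $\calS^* \cup \calX \cup \calY$ and $\intclr(\Sigma^*) \subseteq \calY$; hence whenever $y' \in \calC \backslash \calY$, its support lies in $\calS^* \cup \calX$, and so any species of $y'$ in $\Sigma$ must in fact lie in $\Sigma^*$. Symmetrically, the endpoint $y''$ of any defining reaction path $Y \rightarrow Y^{(1)} \rightarrow \cdots \rightarrow Y^{(k)} \rightarrow y''$ witnessing $Y \in \intclr(\Sigma^*)$ is necessarily non-intermediate, since the support of an intermediate complex is a singleton in $\calY$, which is disjoint from $\Sigma^* \subseteq \calS^*$. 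Nonemptiness of $\Sigma$ is immediate from $\Sigma \supseteq \Sigma^* \neq \varnothing$.

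The analysis then splits on whether each of $y, y'$ belongs to $\calY$. If $y, y' \in \calC \backslash \calY$, then $y \rightarrow y' \in \calR^*_c \subseteq \calR^*$, and the siphon property of $\Sigma^*$ in $G^*$ supplies a species of $y$ in $\Sigma^* \subseteq \Sigma$. If $y = Y \in \calY$ and $y' \in \calC \backslash \calY$ has a species in $\Sigma^*$, then the length-one path $Y \rightarrow y'$ places $Y$ directly in $\intclr(\Sigma^*)$ by definition. If $y' = Y' \in \calY$, so that $Y' \in \intclr(\Sigma^*)$, I would fix a defining path $Y' \rightarrow Y^{(1)} \rightarrow \cdots \rightarrow Y^{(k)} \rightarrow y''$ with $y'' \in \calC \backslash \calY$ having a species in $\Sigma^*$, and prepend $y \rightarrow Y'$. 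If $y \in \calY$ the extended path exhibits $y$ as an element of $\intclr(\Sigma^*)$, while if $y \in \calC \backslash \calY$ the extended path either loops back as $y = y''$ (giving $y$ a species in $\Sigma^*$ directly) or collapses under the construction of $\calR^*_Y$ to a reaction $y \rightarrow y'' \in \calR^*$, on which the siphon property of $\Sigma^*$ in $G^*$ again yields a species of $y$ in $\Sigma^*$.

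For the second statement, given any siphon $\Sigma'$ of $G$ with $\Sigma^* \subseteq \Sigma'$ and any $Y \in \intclr(\Sigma^*)$, I would apply Remark \ref{rk:siphonpath} to the defining path $Y \rightarrow Y^{(1)} \rightarrow \cdots \rightarrow Y^{(k)} \rightarrow y''$: since $y''$ has a species in $\Sigma^* \subseteq \Sigma'$, every complex $Y, Y^{(1)}, \ldots, Y^{(k)}$ along the path must contain a species in $\Sigma'$, and since $Y$ is an intermediate with support $\{Y\}$, we conclude $Y \in \Sigma'$. I expect the main obstacle to be the bookkeeping in the first statement, particularly distinguishing the direct reaction collapse into $\calR^*_c$ from the path collapse into $\calR^*_Y$ and remembering that the trivial loop case $y = y''$ after concatenation must be treated separately rather than as producing a $\calR^*$-reaction; everything else follows from the siphon property of $\Sigma^*$ in $G^*$ combined with Remark \ref{rk:siphonpath}.
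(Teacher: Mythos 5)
Your proof is correct and follows essentially the same route as the paper's: a case analysis on whether the reactant and product are intermediates, using the siphon property of $\Sigma^*$ in $G^*$ for collapsed reactions, the definition of $M(\Sigma^*)$ for intermediate reactants, and Remark \ref{rk:siphonpath} for the second claim. Your explicit treatment of the degenerate case $y = y''$ (where the concatenated path would collapse to a self-loop rather than a reaction of $\calR^*$) is in fact slightly more careful than the paper's own write-up.
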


\begin{proof}
	Pick any $S' \in \Sigma$, and let $y \rightarrow y' \in \calR$ be any reaction having $S'$ as one of its products.
	
	Suppose first $S' \in \Sigma^*$. If $\pry \rightarrow \pry' \in \calR^*$, then $\Sigma^*$ contains some reactant of $y \rightarrow y'$, and so does $\Sigma$. If $\pry \rightarrow \pry' \notin \calR^*$, then $y \rightarrow y' = Y \rightarrow y'$ for some $Y \in \calY$. By {construction}, $Y \in M(\Sigma^*) \subseteq \Sigma$.
	
	Now suppose $S' \notin \Sigma^*$. Then $S' \in M(\Sigma^*)$, meaning that $y' = S' \in \calY$, and that there exists a reaction path
	\[
		S' \longrightarrow Y^{(1)} \longrightarrow \cdots \longrightarrow Y^{(k)} \longrightarrow y_0'
	\]
	in $G$ such that {the support of $y_0'$ intersects $\Sigma^*$}, and $Y^{(1)}, \ldots, Y^{(k)} \in \calY$. If $y \in \calY$, then it follows that $y \in M(\Sigma^*)$, and so $y \rightarrow y'$ has a reactant in $\Sigma$. If $y \notin \calY$, we have $\pry \rightarrow \pry_0' \in \calR^*$, and so one of the species constituting $\pry$ belongs to $\Sigma^*$. We conclude that one of the reactants of $y \rightarrow y'$ belongs to $\Sigma$. This completes the proof that $\Sigma$ is a siphon of $G$.
	
	It follows straight from the construction of $M(\Sigma^*)$ and {Remark~\ref{rk:siphonpath}}, that any siphon of $G$ containing $\Sigma^*$ must also contain all the intermediates in $M(\Sigma^*)$.
\end{proof}

\comment{%
\subsubsection*{The Siphon/P-Semiflow Property}

\begin{lemma}\label{lemm:mistake1og2}
	Suppose $G^* = (\calS^*, \calC^*, \calR^*)$ is the reduction of $G = (\calS, \calC, \calR)$ by the removal of a set of intermediates $\calY$. If $(\omega^*, x, 0) \geqslant 0$ and $(\omega^*, x, 0) \cdot y_{j_i} > 0$, then $Y_i \in M(\supp (\omega^*, x, 0))$.
\end{lemma}

\begin{proof}
	By construction, $Y_i$ and $y_{j_i}$ are in the same connected component of $(\calC, \calR)$. By (I2), there are $y' \in \calC \backslash \calY$ and a reaction path connecting $Y_i$ to $y'$ such that all its non-endpoints are intermediate complexes. Now $y_{j_i}$ and $y'$ are in the same connected component of $(\calC, \calR)$, and so
	\[
		(\omega^*, x, 0) \cdot y' = (\omega^*, x, 0) \cdot y_{j_i} > 0
	\]
	by Lemma \ref{lemm:a}. In particular,
	\[
		\supp (\omega^*, x, 0) \cap \supp y' \neq \varnothing\,,
	\]
	and
	\[
		Y_i \in M( \supp (\omega^*, x, 0) \cap \supp y' )
		\subseteq M( \supp(\omega^*, x, 0) )\,,
	\]
	completing the proof.
\end{proof}

\begin{lemma}\label{lemm:main1of2}
	Suppose $G^* = (\calS^*, \calC^*, \calR^*)$ is the reduction of $G = (\calS, \calC, \calR)$ by the removal of a set of intermediates $\calY$. Then $G$ has the siphon/P-semiflow property if, and only if $G^*$ has the siphon/P-semiflow property.
\end{lemma}

\begin{proof}
	($\Rightarrow$) Suppose $G$ has the siphon/P-semiflow property. Let $\Sigma^*$ be any siphon of $G^*$. By Lemma \ref{lemm:minsiphons2},
	\[
		\Sigma := \Sigma^* \cup \intclr(\Sigma^*)
	\]
	is a siphon of $G$. Let $\omega \in \Gamma^\perp$ be a P-semiflow supported in $\Sigma$, and write
	\[
		\omega = (\omega^*, x, a(\omega^*, x))
	\]
	for some $\omega^* \in (\Gamma^*)^\perp$, and some $x \in \r^\ell$ (Lemma \ref{lemm:intermediatecls1og2}). Since $\Sigma^* \subseteq \calS^*$ and $\intclr(\Sigma^*) \subseteq \calY$, we conclude that $\supp \omega^* \subseteq \Sigma^*$, and that $x = 0$. Furthermore, $\omega^* > 0$, for if $\omega^* = 0$, then $a(\omega^*, x) = 0$, and so $\omega = 0$, contradicting the assumption that $\omega$ is a P-semiflow. This shows $G^*$ has the siphon/P-semiflow property.
	
	($\Leftarrow$) Suppose $G^*$ has the siphon/P-semiflow property. Let $\Sigma$ be any siphon of $G$. Set 
	\[
		\Sigma^* := \Sigma \cap \calS^*
	\]
	By Lemma \ref{lemm:minsiphons1}, there are two possibilities.
	
	If $\Sigma^* = \varnothing$, then
	\[
		\calP := \Sigma \cap \calX
	\]
	is nonempty. Set
	\[
		\omega := (0, x, a(0, x)) > 0\,,
	\]
	where
	\[
		x := \sum_{i\colon X_i \in \calP} {\mathbf e}_i\,.
	\]
	By Lemma \ref{lemm:mistake1og2}, $Y_i \in M(\calP) = M(\supp(0, x, 0))$ for each $i \in [p]$ such that $a_i(0, x) = (0, x, 0) \cdot y_{j_i} > 0$. Since $M(\calP) \subseteq \Sigma$, it follows that $\supp \omega \subseteq \calP \cup M(\calP) \subseteq \Sigma$.
	
	If $\Sigma^* \neq \varnothing$, then $\Sigma^*$ is a siphon of $G^*$. Hence, there exists a P-semiflow $\omega^* \in (\Gamma^*)^\perp$ supported in $\Sigma^*$. By Lemma \ref{lemm:mistake1og2}, $Y_i \in M( \supp(\omega^*, 0, 0)) \subseteq M(\Sigma^*)$ for every $i \in [p]$ such that $a_i (\omega^*, 0) = (\omega^*, 0, 0) \cdot y_{j_i} > 0$. Since $M(\Sigma^*) \subseteq \Sigma$, it follows that
	\[
		\omega := (\omega^*, 0, a(\omega^*, 0))
	\]
	is a P-semiflow of $G$ supported in $\Sigma$. This establishes that $G$ has the siphon/P-semiflow property.
\end{proof}}

\subsubsection*{Drainable and Self-Replicable Siphons}

\begin{lemma}\label{lemm:drainable_Y_aux}
Suppose $G^* = (\calS^*, \calC^*, \calR^*)$ is the reduction of $G = (\calS, \calC, \calR)$ by the removal of a set of intermediates $\calY$. If $\Sigma \subseteq \calS$ is a drainable or self-replicable siphon of $G$, then $\Sigma^* := \Sigma \cap \calS^*$ is nonempty.	
\end{lemma}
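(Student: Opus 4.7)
My plan is to argue by contrapositive via Proposition~\ref{prop:connection}\olr{i}: drainable and self-replicable sets are critical, i.e.\ contain no support of a P-semiflow. So assuming $\Sigma \cap \calS^* = \varnothing$, it suffices to construct a P-semiflow $\omega$ of $G$ with $\supp \omega \subseteq \Sigma$, contradicting criticality.

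First I would rule out $\Sigma \subseteq \calY$. Pick any $Y \in \Sigma$; property (I2) supplies a reaction path $y \to Y^{(1)} \to \cdots \to Y^{(r)} \to Y$ with $y \in \calC \setminus \calY$ and the $Y^{(i)} \in \calY$. Applying the siphon property to the last reaction forces $Y^{(r)} \in \Sigma$; iterating backward along the path forces $Y^{(r-1)}, \ldots, Y^{(1)} \in \Sigma$, whence the first reaction $y \to Y^{(1)}$ requires $y$ to contain a species of $\Sigma$. But $\supp y \subseteq \calS^* \cup \calX$, which is disjoint from $\Sigma$ by assumption, a contradiction. Thus $\Sigma_X := \Sigma \cap \calX \neq \varnothing$.

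The central structural observation is that any complex $y$ containing some $X \in \calX$ lies in $\calC \setminus \calY$ but not in $\calC^*$ (else $X$ would be in $\calS^*$). By Lemma~\ref{lemm:components2}, any other non-intermediate $y' \neq y$ in the same weak connected component of $(\calC, \calR)$ would give $y = \widehat y$ and $y' = \widehat{y'}$ both in $\calC^*$, contradicting $y \notin \calC^*$. So $y$ is the unique non-intermediate in its weak component; denote it $y_K$, and let $\calK$ be the collection of weak components $K$ of $(\calC, \calR)$ whose $y_K$ contains some $X \in \Sigma_X$. For every intermediate $Y$ lying in some $K \in \calK$, property (I2) provides a directed reaction path $Y \to Z^{(1)} \to \cdots \to Z^{(s)} = y_K$ entirely inside $K$; applying the siphon property at some $X \in \Sigma_X \cap \supp y_K$ to $Z^{(s-1)} \to y_K$, and then working backwards along this path, places $Z^{(s-1)}, \ldots, Z^{(1)}, Y$ all in $\Sigma$.

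Finally, I define $\omega \in \rplus^n$ by $\omega_X := 1$ for $X \in \Sigma_X$, $\omega_Y := \sum_{X \in \Sigma_X} (y_K)_X$ for each intermediate $Y$ in some $K \in \calK$, and $\omega_S := 0$ elsewhere. A case-by-case verification shows $\omega^T N = 0$: reactions in $\calR^*_c$ have both complexes in $\calC^*$ and involve only $\calS^*$-species on which $\omega$ vanishes; reactions inside some $K \in \calK$ are balanced because $\omega_Y = \omega \cdot y_K$ for every intermediate $Y$ in $K$; and reactions inside a component outside $\calK$ involve only species on which $\omega$ is zero. The preceding paragraph yields $\supp \omega \subseteq \Sigma$, and $\omega \neq 0$ since $\Sigma_X \neq \varnothing$, producing the contradiction. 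I expect the main obstacle to be the structural observation: establishing that each $\calX$-containing complex is isolated with its intermediates in a single weak component, so that the weights $\omega_Y = \omega \cdot y_K$ are both well defined and forced by the conservation requirements. Once this is in hand, the rest is bookkeeping.
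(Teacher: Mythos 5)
Your proposal is correct and follows essentially the same route as the paper: assume $\Sigma\cap\calS^*=\varnothing$, deduce $\Sigma\cap\calX\neq\varnothing$, build the P-semiflow $\omega=(0,x,a(0,x))$ with $x$ the indicator of $\Sigma\cap\calX$, check its support lies in $\Sigma$ via the observation that an $\calX$-containing complex is the unique non-intermediate in its connected component, and contradict criticality through Proposition~\ref{prop:connection}(\emph{i}\,). The only difference is that you re-derive from scratch what the paper cites from Lemma~\ref{lemm:minsiphons1}, Remark~\ref{rem:clarification} and Lemma~\ref{lemm:intermediatecls1og2}; your explicit verifications of those ingredients are all sound.
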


\begin{proof}
Suppose on contrary that $\Sigma^* = \varnothing$. Then $\Sigma \cap \calX$
	is nonempty {by Lemma~\ref{lemm:minsiphons1}}. Consider the P-semiflow
	\[
		\omega := (0, x, a(0, x)) > 0\,,
\qquad \text{where}\qquad 
		x := \sum_{i\colon X_i \in \Sigma \cap \calX} {\mathbf e}_i\,.
	\]
{Let 	$i \in [p]$ such that $a_i(0, x) = (0, x, 0) \cdot y_{j_i} > 0$.
Then $\supp(y_{j_i})\cap (\Sigma \cap \calX) \neq \varnothing$. By Remark~\ref{rem:clarification}, $y_{j_i}$ is the only non-intermediate complex in the connected component where $Y_i$ belongs to. Therefore there must exist a reaction path connecting $Y_i$ to $y_{j_i}$ and hence $Y_i\in M(\Sigma \cap \calX)$.}
  It follows that $\supp \omega \subseteq (\Sigma \cap \calX) \cup M(\Sigma \cap \calX) \subseteq \Sigma$. So, $\Sigma$ contains the support of a P-semiflow, in other words, $\Sigma$ is noncritical. We conclude by Proposition~\ref{prop:connection}\olr{i}
 that $\Sigma$ can be neither drainable nor self-replicable, contradicting the hypotheses.	
\end{proof}

\newcommand{\prz}{\widehat{z}}

We are now ready to prove Theorem  \ref{thm:main}\olr{i}.

\medskip
\noindent {\textit{Proof of Theorem  \ref{thm:main}\ilr{i}.}}
	In virtue of Lemma \ref{lemm:intermediateinduction}, it suffices to show that the result holds for the removal of a single intermediate $Y$. The general result then follows by induction on the size of the set of intermediates.
	
	$(\Leftarrow)$ Suppose $\Sigma^* \subseteq \calS^*$ is a drainable siphon of {$G^*$}. By Lemma \ref{lemm:minsiphons2}, $\Sigma := \Sigma^* \cup M(\Sigma^*)$ is a siphon of $G$. We will show that it is drainable.
	
	By construction, we have $M(\Sigma^*) = \varnothing$ or $M(\Sigma^*) = \{Y\}$. If $M(\Sigma^*) = \varnothing$, then $\Sigma = \Sigma^*$,  {and any reaction that contains a species in $\Sigma^*$ in the product belongs to $\calR^*_c$. }Thus we have nothing left to show. So, we may assume $M(\Sigma^*) = \{Y\}$. Since $\Sigma^*$ is drainable, there exist reactions
		\[
			\pry_1 \rightarrow \pry_1', \ldots, \pry_{k_Y} \rightarrow \pry_{k_Y}' \in {\calR^*_Y}
\qquad \text{and}\qquad
			\prz_1 \rightarrow \prz_1', \ldots, \prz_{k_c} \rightarrow \prz_{k_c}' \in {\calR^*_c}
		\]
		such that
		\[
			\theta_i :=  \sum_{j = 1}^{k_Y} (\pry_j' - \pry_j)_i + \sum_{j = 1}^{k_c} (\prz_j' - \prz_j)_i < 0\,,
		\quad
		\forall i \in [n]\,\colon\ S_i^* \in \Sigma^*\,.
		\]
		Let $T$ be a large enough positive integer such that { $(\pry_1')_i + T\theta_i <0$} 
for every $i \in [n]$ such that $S_i^* \in {\Sigma^*}$. We have
		\[
			y_1 \rightarrow Y,\ Y \rightarrow y_1', \ldots, y_{k_Y} \rightarrow Y,\ Y \rightarrow y_{k_Y}', \ z_1 \rightarrow z_1', \ldots, z_{k_c} \rightarrow z_{k_c}' \in \calR\,.
		\]
Let
{%
$$\alpha :=   (y_1' - Y) + \sum_{j = 1}^{k_Y} T(Y - y_j) + \sum_{j = 1}^{k_Y} T(y_j' - Y) + \sum_{j = 1}^{k_c} T(z_j' - z_j) $$
}
{%
We have
$$ \alpha_i = \begin{cases}
(\pry_1')_i + T\theta_i <0  & \text{if }i\in [n] \colon S_i^* \in \Sigma^*\\[1ex]
-1<0  & \text{if } i = n+\ell+1.
\end{cases}$$
The last case corresponds to the coordinate of $Y$.	}This shows $\Sigma$ is drainable.

	$(\Rightarrow)$ Suppose $\Sigma \subseteq \calS$ is a drainable siphon of $G$. By Lemmas \ref{lemm:minsiphons1} and \ref{lemm:drainable_Y_aux}, $\Sigma^* := \Sigma \cap \calS^*$ is a siphon of $G^*$. We will show that it is drainable.
	
	Since $\Sigma$ is drainable, there exist reactions 
	\begin{equation}\label{eq:bora}
	y_1 \rightarrow Y, \ldots, y_{k_Y} \rightarrow Y, Y \rightarrow y_1', \ldots, Y \rightarrow y_{k_d}', z_1 \rightarrow z'_1, \ldots, z_{k_c} \rightarrow z'_{k_c} \in \calR
	\end{equation}
	such that $y_1, \ldots, y_{k_Y}, y_1', \ldots, y_{k_d}', z_1, z_1', \ldots, z_{k_c}, z_{k_c}' \in \calC \backslash \calY$, and
{$$
\alpha_i:= \sum_{j = 1}^{k_Y} (Y - y_j)_i + \sum_{j = 1}^{k_d} (y_j' - Y)_i + \sum_{j = 1}^{k_c} (z_j' - z_j)_i  < 0\,,
$$
for all $i\in [n]$ such that $S_i^* \in \Sigma^*=\Sigma\cap \calS^*$, and also for $i=n+\ell+1$, if $Y\in \Sigma$.
In the latter case, since $\alpha_{n+\ell+1}= k_Y-k_d$, it must hold that $k_Y<k_d$.

If $k_d > k_Y$, then 
$$ \alpha_i =  \sum_{j = 1}^{k_Y} (y_j' - y_j)_i + \sum_{j = k_c+1}^{k_d} (y_j')_i + \sum_{j = 1}^{k_c} (z_j' - z_j)_i 
\geq  \sum_{j = 1}^{k_Y} (\pry_j' - \pry_j)_i + \sum_{j = 1}^{k_c} (\prz_j' - \prz_j)_i\,, $$
for all $i$ such that $S_i^* \in \Sigma^*$. Thus $\Sigma^*$ is drainable using the reactions
$\pry_1 \rightarrow \pry_1', \ldots, \pry_{k_Y} \rightarrow \pry_{k_Y}' \in {\calR^*_Y}$ and $
			\prz_1 \rightarrow \prz_1', \ldots, \prz_{k_c} \rightarrow \prz_{k_c}' \in {\calR^*_c}$.

If $k_Y \geq k_d$, then necessarily $Y\notin\Sigma$. Then, for all reactions in (\ref{eq:bora}) of the form $Y\rightarrow y'$, the support of $y'$ does not contain any species in $\Sigma^*$. In particular this holds for $y_1',\dots,y_{k_d}'$. Choose any $y' \in \calC \backslash \calY$ such that $Y \rightarrow y' \in \calR$.
We have
$$ \alpha_i = \sum_{j = 1}^{k_Y} - (y_j)_i + \sum_{j = 1}^{k_Y} (y')_i + \sum_{j = 1}^{k_c} (z_j' - z_j)_i  =  \sum_{j = 1}^{k_Y} (\pry' - \pry_j)_i + \sum_{j = 1}^{k_c} (\prz_j' - \prz_j)_i\,,  $$
for all $i$ such that $S_i^* \in \Sigma^*$, since $(y')_i=0$. 
Thus $\Sigma^*$ is drainable using the reactions
$\pry_1 \rightarrow \pry', \ldots, \pry_{k_Y} \rightarrow \pry' \in {\calR^*_Y}$ and $
			\prz_1 \rightarrow \prz_1', \ldots, \prz_{k_c} \rightarrow \prz_{k_c}' \in {\calR^*_c}$.
	}
		
The proof for self-replicable siphons is entirely analogous, with the appropriate inequalities reversed, and the roles played by reactions creating or consuming $Y$ swapped.
		\mybox

\subsubsection*{Consistency}

\begin{lemma}\label{lemm:consistencypreserved}
Suppose $G^* = (\calS^*, \calC^*, \calR^*)$ is the reduction of a reaction network $G = (\calS, \calC, \calR)$ by the removal of a set of intermediates $\{Y\}$ containing a single intermediate $Y$. Then $G^*$ is consistent if, and only if network $G$ is consistent.
\end{lemma}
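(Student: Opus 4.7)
The plan is to prove both directions by explicitly constructing strictly positive T-semiflows.

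First, I would set up notation by using (I1), which forces $Y$ to appear only as the complex $Y$ itself. Thus the reactions of $G$ decompose as $\calR = \calR_c \sqcup \calR_Y^{\rm in} \sqcup \calR_Y^{\rm out}$, where $\calR_Y^{\rm in}$ consists of reactions $y_i \to Y$, $\calR_Y^{\rm out}$ of reactions $Y \to y'_j$, and $\calR_c$ of reactions not involving $Y$. The reaction set of $G^*$ then decomposes as $\calR^* = \calR^*_c \cup \calR^*_Y$ (possibly overlapping), where $\calR^*_c = \calR_c$ and $\calR^*_Y$ consists precisely of composites $y_i \to y'_j$ with $y_i \neq y'_j$ for which both $y_i \to Y$ and $Y \to y'_j$ lie in $\calR$. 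By (I2), both $\calR_Y^{\rm in}$ and $\calR_Y^{\rm out}$ are nonempty.

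For the forward direction, I would assume $v \gg 0$ is a T-semiflow of $G$. Mass balance at $Y$ yields $V := \sum_i v_{y_i \to Y} = \sum_j v_{Y \to y'_j} > 0$. I would then define $v^*_{R^*}$ by adding $v_{R^*}$ whenever $R^* \in \calR^*_c$ and $v_{y_i \to Y} v_{Y \to y'_j}/V$ whenever $R^* = y_i \to y'_j$ also lies in $\calR^*_Y$. Strict positivity is immediate. For mass balance at $S \neq Y$ in $G^*$, I would extend the double sum over $(i,j)$ harmlessly to pairs with $y_i = y'_j$ (which contribute $0$), factor it as $\sum_j y'_j(S) v_{Y \to y'_j} - \sum_i y_i(S) v_{y_i \to Y}$, and recognize this as exactly what mass balance at $S$ in $G$ supplies (using $Y(S) = 0$).

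For the backward direction, I would assume $v^* \gg 0$ is a T-semiflow of $G^*$. For each $R^* \in \calR^*$ I split $v^*_{R^*} = t_{R^*} + s_{R^*}$ with $t_{R^*} > 0$ iff $R^* \in \calR^*_c$ and $s_{R^*} > 0$ iff $R^* \in \calR^*_Y$, choosing a strictly positive split for $R^* \in \calR^*_c \cap \calR^*_Y$. I set $v_R := t_{R^*}$ for $R \in \calR_c$, and $v_{y_i \to Y} := \sum_j s_{R^*_{ij}}$, $v_{Y \to y'_j} := \sum_i s_{R^*_{ij}}$ where $R^*_{ij} = y_i \to y'_j$. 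The symmetric double-sum expression makes mass balance at $Y$ automatic, and a short computation reduces mass balance at $S \neq Y$ to $N^* v^* = 0$.

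The main obstacle is ensuring strict positivity of $v$ in the backward direction. The construction can produce $v_{y_i \to Y} = 0$ when every reaction $Y \to y'_j \in \calR$ satisfies $y'_j = y_i$, so that the only possible composite would be the forbidden self-loop $y_i \to y_i$. In that case (I2) guarantees that some outgoing reaction from $Y$ exists, hence $Y \to y_i \in \calR$, and I would add a cycle correction of size $\epsilon > 0$ to both $v_{y_i \to Y}$ and $v_{Y \to y_i}$. This preserves mass balance at every species: the contribution at $S \neq Y$ is $-y_i(S)\epsilon + y_i(S)\epsilon = 0$, and at $Y$ is $\epsilon - \epsilon = 0$. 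An analogous correction, guaranteed again by (I2), handles any zero $v_{Y \to y'_j}$, completing the construction.
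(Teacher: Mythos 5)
Your proposal is correct and follows essentially the same route as the paper's proof: the forward direction uses the same product weights $v_{y_i\to Y}v_{Y\to y'_j}/V$ on the collapsed reactions (with the vanishing diagonal terms $y_i=y'_j$ absorbed harmlessly), and the backward direction distributes the weight of each collapsed reaction over its two legs and repairs any zero weight by adding a cycle $y\to Y\to y$, which is exactly the paper's ``$+1$ for $y\in\calC_Y$'' correction. The only differences are cosmetic (an $\epsilon$-cycle added only where needed versus unconditionally, and an arbitrary positive split of $v^*$ on $\calR^*_c\cap\calR^*_Y$ versus the paper's factor $\tfrac12$).
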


\begin{proof}
($\Rightarrow$) Suppose $G^*$ is consistent. This is equivalent to say that
	\[
		\sum_{y \rightarrow y' \in \calR^*} v_{y \rightarrow y'}(y' - y) = 0
	\]
for some $v_{y \rightarrow y'} > 0$, $y \rightarrow y' \in \calR^*$. Let ${\calR}_Y^* \subseteq \calR^*$ be the subset of reactions $y \rightarrow y' \in \calR^*$ such that $y \rightarrow Y, Y \rightarrow y' \in \calR$, and let $\calR^*_c \subseteq \calR^*$ be the subset of all reactions $y \rightarrow y' \in \calR^*$ such that $y \rightarrow y' \in \calR$. Note that
	\[
		\calR^* = \calR^*_Y \cup \calR^*_c\,,
	\]
and that the union need not be disjoint. Let $\calC_{\leftrightarrow} \subseteq \calC$ be the subset of complexes $y \in \calC$ such that $y \rightarrow Y, Y \rightarrow y \in \calR$. Then
	\[
	\arraycolsep=0.25em
	\begin{array}{rcl}
		0
		&=& \displaystyle \left( \sum_{y \rightarrow y' \in \calR^*_Y \backslash \calR^*_c} 
			+ \sum_{y \rightarrow y' \in \calR^*_Y \cap \calR^*_c} 
			+ \sum_{y \rightarrow y' \in \calR^*_c \backslash \calR^*_Y} \right) v_{y \rightarrow y'}(y' - y) \\[4ex]
		& & + \displaystyle \sum_{y \in \calC_{Y}} (Y - y)
			+ \sum_{y' \in \calC_{Y}} (y' - Y) \\[4ex]
		&=& \left( \displaystyle \frac{1}{2}\sum_{y \rightarrow y' \in \calR^*_Y \cap\calR^*_c} 
			+ \sum_{y \rightarrow y' \in \calR^*_c \backslash \calR^*_Y} \right) v_{y \rightarrow y'}(y' - y) \\[4ex]
		& & + \left( \displaystyle \sum_{y \rightarrow y' \in \calR^*_Y \backslash \calR^*_c} v_{y \rightarrow y'} 
			+ \frac{1}{2}\sum_{y \rightarrow y' \in \calR^*_Y \cap \calR^*_c} v_{y \rightarrow y'} 
			+ \sum_{y \in \calC_Y} \right) (Y - y) \\[4ex]
		& & + \left( \displaystyle \sum_{y \rightarrow y' \in \calR^*_Y \backslash \calR^*_c} v_{y \rightarrow y'} 
			+ \frac{1}{2}\sum_{y \rightarrow y' \in \calR^*_Y \cap \calR^*_c} v_{y \rightarrow y'} 
			+ \sum_{y' \in \calC_Y} \right) (y' - Y) \\[4ex]
		&=& \displaystyle \sum_{y \rightarrow y' \in \calR} w_{y \rightarrow y'} (y' - y)\,,
	\end{array}
	\]
where
\begin{equation}\label{eq:w1}
w_{y \rightarrow y'} = v_{y \rightarrow y'}\,,
\quad
\text{if}\ y \rightarrow y' \in \calR^*_c \backslash \calR^*_{Y}\,,
\end{equation}
\begin{equation}\label{eq:w12}
w_{y \rightarrow y'} = \frac{v_{y \rightarrow y'}}{2}\,,
\quad
\text{if}\ y \rightarrow y' \in \calR^*_Y \cap \calR^*_c\,,
\end{equation}
\begin{equation}\label{eq:w2}
w_{y \rightarrow Y} = 
\left\{
\begin{array}{rl}
\left(
\displaystyle 
\sum_{y \rightarrow y' \in \calR^*_Y \backslash \calR^*_c} + \frac{1}{2} \sum_{y \rightarrow y' \in \calR^*_Y \cap \calR^*_c} 
\right)
v_{y \rightarrow y'} + 1\,, & \quad \text{if}\ y \in \calC_{Y} \\[1ex]
\left(
\displaystyle 
\sum_{y \rightarrow y' \in \calR^*_Y \backslash \calR^*_c} + \frac{1}{2} \sum_{y \rightarrow y' \in \calR^*_Y \cap \calR^*_c} 
\right)
v_{y \rightarrow y'}\,, & \quad \text{if}\ y \notin \calC_{Y}\,,
\end{array}
\right.
\end{equation}
and, similarly,
\begin{equation}\label{eq:w32}
w_{Y \rightarrow y'} = 
\left\{
\arraycolsep=0.25em
\begin{array}{rl}
\left(
\displaystyle 
\sum_{y \rightarrow y' \in \calR^*_Y \backslash \calR^*_c} + \frac{1}{2} \sum_{y \rightarrow y' \in \calR^*_Y \cap \calR^*_c} 
\right) 
v_{y \rightarrow y'} + 1\,, & \quad \text{if}\ y' \in \calC_{Y} \\[1ex]
\left(
\displaystyle 
\sum_{y \rightarrow y' \in \calR^*_Y \backslash \calR^*_c} + \frac{1}{2} \sum_{y \rightarrow y' \in \calR^*_Y \cap \calR^*_c} 
\right)
v_{y \rightarrow y'}\,, & \quad \text{if}\ y' \notin \calC_{Y}\,.
\end{array}
\right.
\end{equation}
Since $G^*$ is obtained from $G$ by the removal of a single intermediate $Y$, every reaction in $\calR$ is of the form $y \rightarrow y'$, $y \rightarrow Y$ or $Y \rightarrow y'$ for some $y \rightarrow y' \in \calR^*$, or of the form $y \rightarrow Y$ or $Y \rightarrow y$ for some $y \in \calC_Y$. Thus, (\ref{eq:w1})--(\ref{eq:w32}) above yield $w_{y \rightarrow y'} > 0$ for every $y \rightarrow y' \in \calR$, and we conclude that $G$ is consistent.

($\Leftarrow$) Now suppose $G$ is consistent, so that there exist $w_{y \rightarrow y'} > 0$, {for all } $y \rightarrow y' \in \calR$, such that
	\begin{equation}\label{eq:quase}
		\sum_{y \rightarrow y' \in \calR} w_{y \rightarrow y'}(y' - y) = 0\,.
	\end{equation}
We partition the set $\calR$ of reactions of $G$ as the (disjoint) union
	\[
		\calR = \calR^*_c \cup \calR_{\rightarrow Y} \cup \calR_{Y \rightarrow}\,,
	\]
where
$\calR^*_c$ is defined as in the first part of the proof, $\calR_{\rightarrow Y}$ is the subset of $\calR$ of reactions having $Y$ as a product, and $\calR_{Y \rightarrow}$ is the subset of $\calR$ of reactions having $Y$ as a reactant. Observe that $Y$ is linearly independent with each complex in $\calC^*$. Combining all coefficients of $Y$ in (\ref{eq:quase}), we obtain
\[
	\sum_{y \rightarrow Y \in \calR_{\rightarrow Y}} w_{y \rightarrow Y}
	- \sum_{Y \rightarrow y' \in \calR_{Y \rightarrow}} w_{Y \rightarrow y'}
	= 0\,,
\]
and so
\begin{equation}\label{eq:quase2}
	\sum_{Y \rightarrow y' \in \calR_{Y \rightarrow}} w_{Y \rightarrow y'} y'
	- \sum_{y \rightarrow Y \in \calR_{\rightarrow Y}} w_{y \rightarrow Y} y
	+ \sum_{y \rightarrow y' \in \calR^*_c} w_{y \rightarrow y'} (y' - y)
	= 0\,.
\end{equation}
Set
\[
	V :=
	\sum_{y \rightarrow Y \in \calR_{\rightarrow Y}} w_{y \rightarrow Y}
	= \sum_{Y \rightarrow y' \in \calR_{Y \rightarrow}} w_{Y \rightarrow y'}\,.
\]
We have
\[
	\sum_{Y \rightarrow y' \in \calR_{Y \rightarrow}} w_{Y \rightarrow y'} y'
	= \sum_{y \rightarrow Y \in \calR_{\rightarrow Y}} \sum_{Y \rightarrow y' \in \calR_{Y \rightarrow}} \frac{w_{y \rightarrow Y}w_{Y \rightarrow y'}}{V} y'
\]
and
\[
	\sum_{y \rightarrow Y \in \calR_{\rightarrow Y}} w_{y \rightarrow Y} y
	= \sum_{y \rightarrow Y \in \calR_{\rightarrow Y}} \sum_{Y \rightarrow y' \in \calR_{Y \rightarrow}} \frac{w_{y \rightarrow Y}w_{Y \rightarrow y'}}{V} y\,.
\]
Plugging these last two identities into (\ref{eq:quase2}), we may rewrite it as
\[
	\sum_{y \rightarrow y' \in \calR^*} v_{y \rightarrow y'}(y' - y) = 0\,,
\]
where
\[
v_{y \rightarrow y'} := \begin{cases} w_{y \rightarrow y'}\, & \text{if}\ y \rightarrow y' \in \calR^*_c \backslash \calR^*_{\rightarrow}\,, \\
 w_{y \rightarrow y'} + \frac{w_{y \rightarrow Y}w_{Y \rightarrow y'}}{V}\, & \text{if}\ y \rightarrow y' \in \calR^*_c \cap \calR^*_{\rightarrow}\,, \\
  \frac{w_{y \rightarrow Y}w_{Y \rightarrow y'}}{V}\, & \text{if}\ \calR^*_{\rightarrow} \backslash \calR^*_c\,.
\end{cases}
\]
In particular, $v_{y \rightarrow y'} > 0$ for every $y \rightarrow y' \in \calR^*$, showing that $G^*$ is consistent.
\end{proof}

\noindent {\textit{Proof of Theorem  \ref{thm:main}\ilr{iii}.}}
Let $G_p := G$ and, for $i = p, \ldots, 1$, let $G_{i-1}$ be the reaction network obtained from $G_i$ by the removal of the set of intermediates $\{Y_i\}$. By Lemma \ref{lemm:intermediateinduction}, $G_0 = G^*$. Iterating Lemma \ref{lemm:consistencypreserved}, we conclude that $G^*$ is consistent if, and only if $G$ is consistent.
\mybox

\subsection{Catalysts}\label{subsec:catalysts_proof}

Suppose $G^* = (\calS^*, \calC^*, \calR^*)$ is the reduction of $G = (\calS, \calC, \calR)$ by the removal of a set of catalysts $\calE$. Let $G_\calE = (\calS_\calE, \calC_\calE, \calR_\calE)$ be the subnetwork of $G$ implied by $\calE$, and write
\[
	\calS^* = \{S^*_1, \ldots, S^*_n\}\,, \qquad \calS_\calE = \{E^{a}_1, \ldots, E^{a}_{q_a}\}\,,
\quad \text{and} \quad
	\calE \backslash \calS_\calE = \{E^{u}_1, \ldots, E^{u}_{q_u}\}\,.
\]
Thus
\[
	\calS = \{S^*_1, \ldots, S^*_n, E^{a}_1, \ldots, E^{a}_{q_a}, E^{u}_1, \ldots, E^{u}_{q_u}\}\,.
\]
These are the orderings we shall assume on the species whenever working with the stoichiometric matrices or stoichiometric subspaces of $G$, $G^*$ or $G_\calE$.

\subsubsection*{Conservation Laws}

\begin{lemma}\label{lemm:catalystcls}
	Suppose $G^* = (\calS^*, \calC^*, \calR^*)$ is the reduction of $G = (\calS, \calC, \calR)$ by the removal of a set of catalysts $\calE$. Then
	\[
		\Gamma^\perp = (\Gamma^*)^\perp \times \Gamma_\calE^\perp \times \r^{q_u}
		\subseteq
		\r^{n + q_a + q_u}\,.
	\] 
\end{lemma}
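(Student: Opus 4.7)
The strategy is to first establish the block decomposition $\Gamma = \Gamma^* \times \Gamma_\calE \times \{0\} \subseteq \r^{n+q_a+q_u}$, and then take orthogonal complements.

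The first step is to unpack the structure of each column of the stoichiometric matrix $N$ of $G$, with species ordered as $S^*_1, \ldots, S^*_n, E^a_1, \ldots, E^a_{q_a}, E^u_1, \ldots, E^u_{q_u}$. By property (C1), every reaction in $\calR$ falls into exactly one of two cases (they cannot overlap because $(\calC,\calR)$ has no self-loops): either (a) the catalyst part is unchanged, and consequently the non-catalyst part must be nonzero, so the stoichiometric vector has the form $(v, 0, 0)$ where $v \in \r^n$ is the stoichiometric vector of the projected reaction in $\calR^*$; or (b) the non-catalyst part is identically zero, i.e., the reaction belongs to $\calR_\calE$, in which case the stoichiometric vector has the form $(0, w, 0)$ where $w \in \r^{q_a}$ is the stoichiometric vector of that same reaction viewed in $G_\calE$. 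The third block vanishes in case (b) because, by the definition of $\calS_\calE$, the unused catalysts in $\calE \backslash \calS_\calE$ do not appear in any complex of $\calC_\calE$, hence in no reaction of $\calR_\calE$.

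The second step combines this with two observations: every reaction in $\calR^*$ arises, by definition, as the projection of some type-(a) reaction of $\calR$, and every reaction of $\calR_\calE$ is itself a type-(b) reaction of $\calR$. The inclusion $\Gamma \subseteq \Gamma^* \times \Gamma_\calE \times \{0\}$ is then immediate from the column description above; for the reverse inclusion, any $u \in \Gamma^*$ is a linear combination of columns of $N^*$, and the same coefficients applied to the corresponding type-(a) columns of $N$ produce $(u, 0, 0) \in \Gamma$, and likewise $(0, w, 0) \in \Gamma$ for every $w \in \Gamma_\calE$, so their sum $(u, w, 0)$ also lies in $\Gamma$.

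The final step is purely linear-algebraic: the orthogonal complement of a product of subspaces decomposes as the product of their orthogonal complements, and $\{0\}^\perp = \r^{q_u}$, which gives the desired identity
\[
\Gamma^\perp = (\Gamma^*)^\perp \times \Gamma_\calE^\perp \times \r^{q_u}\,.
\]
I do not foresee any real obstacle; the proof is essentially bookkeeping from (C1). The only mild subtlety is remembering that unused catalysts never contribute to any stoichiometric change, which is ensured by the two halves of (C1) (catalyst coefficients unchanged in type-(a) reactions; and unused catalysts absent from $\calR_\calE$ by construction of $G_\calE$).
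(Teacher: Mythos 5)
Your proposal is correct and follows essentially the same route as the paper: both rest on the observation that (C1) forces every column of $N$ to be either $(v,0,0)$ with $v$ a column of $N^*$ or $(0,w,0)$ with $w$ a column of $N_\calE$, i.e., the block decomposition of the stoichiometric matrix. The only cosmetic difference is that you first identify $\Gamma=\Gamma^*\times\Gamma_\calE\times\{0\}$ and then dualize, whereas the paper reads off the left-kernel condition $\omega^TN=0$ directly from the block form.
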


\begin{proof}
	Write
\[
	\calR^* = \{R^*_1, \ldots, R^*_m\}\,,
\]
and set $\calR_S := \calR \backslash \calR_\calE$. For each $j \in [m]$, let $R^{(j)}_1, \ldots, R^{(j)}_{k_j} \in \calR_S$ be the reactions of $G$ from which $R^*_j$ is obtained by removing the catalysts from both reactant and product in the construction of $G^*$. Write
\[
	\calR_\calE = \{R^\calE_1, \ldots, R^\calE_{m_\calE}\}\,.
\]
Thus
\[
	\calR = \{R^{(1)}_1, \ldots, R^{(1)}_{k_1}, \ldots, R^{(m)}_1, \ldots, R^{(m)}_{k_m}, R^\calE_1, \ldots, R^\calE_{m_\calE}\}\,.
\]
With these orderings on $\calR$, $\calR^*$ and $\calR_\calE$, we may express the stoichiometric matrix $N$ of $G$ as
\begin{equation}\label{eq:catalystsstoichiometry}
	N =
	\left[\begin{array}{cc}
		N^\prime & 0 \\
		0 & N_\calE \\
		0 & 0
	\end{array}\right]\,,
\end{equation}
where $N'$ has $n$ rows, $k_1 + \cdots + k_m$ columns, and has the property that the columns corresponding to $R^{(j)}_1, \ldots, R^{(j)}_{k_j}$ are equal to the $j^{th}$ column of $N^*$, for $j = 1, \ldots, m$, where $N_\calE$ is the stoichiometric matrix of $G_\calE$, and where the bottom $q_u$ rows are zero.

Given $\omega^* \in \r^n$, we have $(\omega^*)^T N^* = 0$ if, and only if $(\omega^*)^T N' = 0$. Hence, given
\[
	\omega = (\omega^*, \omega_\calE, x) \in \r^{n + q_a + q_u}\,,
\]
we have $\omega^T N = 0$ if, and only if $(\omega^*)^T N^* = 0$, and $\omega_\calE^T N_\calE = 0$. This proves the lemma.
\end{proof}

\noindent {\textit{Proof of Theorem  \ref{thm:main2}\ilr{iv}.}}
{($\Rightarrow$) If $\omega = (\omega^*, \omega_\calE, x)$ is a strictly positive conservation law of $G$, then $\omega^*$ is a strictly positive conservation law of $G^*$ by Lemma \ref{lemm:catalystcls}.}

{($\Leftarrow)$  If $\omega^*$ is a strictly positive conservation law of $G^*$, then choose any $x\in \mathbb{R}^{q_u}_{>0}$ and a strictly positive vector $\omega_\calE\in \Gamma_\calE^\perp$, which exists since $G_\calE$ is conservative. Then   
$\omega = (\omega^*, \omega_\calE,x)$ is a strictly positive conservation law of $G$ by Lemma~\ref{lemm:catalystcls}.}
\mybox

\subsubsection*{Siphons}

\begin{lemma}\label{lemm:catalystsiphons_all}
	Suppose $G^* = (\calS^*, \calC^*, \calR^*)$ is the reduction of $G = (\calS, \calC, \calR)$ by the removal of a set of catalysts $\calE$. Let $\Sigma$ be a minimal siphon of $G$. Then one of the three possibilities below is true.
	\begin{itemize}
		\item[{{\em(}i\,{\em)}}] $\Sigma \subseteq \calS^*$, and it is a minimal siphon of $G^*$.
		
		\item[{{\em(}ii\,{\em)}}] $\Sigma \subseteq \calS_\calE$, and it is a minimal siphon of $G_\calE$.
		
		\item[{{\em(}iii\,{\em)}}] $\Sigma = \{E\}$ for some $E \in \calE \backslash \calS_\calE$.
	\end{itemize}
\end{lemma}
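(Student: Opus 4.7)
The plan is to classify minimal siphons of $G$ according to where they sit inside the partition $\calS = \calS^* \sqcup \calS_\calE \sqcup (\calE \setminus \calS_\calE)$. Given a minimal siphon $\Sigma$ of $G$, I would write
\[
  \Sigma^* := \Sigma \cap \calS^*, \qquad
  \Sigma_\calE := \Sigma \cap \calS_\calE, \qquad
  \Sigma_u := \Sigma \cap (\calE \setminus \calS_\calE),
\]
and show that exactly one of these three pieces coincides with $\Sigma$ and yields the corresponding alternative in the statement. The organizing tool throughout is property (C1), which partitions $\calR$ into \emph{mixed} reactions (involving at least one non-catalyst, with the catalyst part conserved) that project onto the reactions of $\calR^*$, and \emph{pure catalyst} reactions, which are precisely the reactions of $\calR_\calE$.

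If $\Sigma_u \neq \varnothing$, fix any $E \in \Sigma_u$. Since $E \notin \calS_\calE$, $E$ never appears in a pure catalyst reaction, and by (C1) in any mixed reaction where $E$ appears as a product, $E$ is also a reactant. Hence $\{E\}$ is a siphon of $G$, so by the minimality of $\Sigma$ one must have $\Sigma = \{E\}$, giving case (iii).

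If $\Sigma_u = \varnothing$ but $\Sigma_\calE \neq \varnothing$, I would first show that $\Sigma_\calE$ is itself a siphon of $G$: any reaction producing an element of $\Sigma_\calE$ is either mixed (and the product in question is also a reactant by catalyst conservation) or belongs to $\calR_\calE$ (so that the siphon property of $\Sigma$ in $G$ furnishes a reactant, which automatically lies in $\calS_\calE$ and therefore in $\Sigma_\calE$). Minimality of $\Sigma$ in $G$ then forces $\Sigma = \Sigma_\calE \subseteq \calS_\calE$. That $\Sigma$ is a siphon of $G_\calE$ is immediate since $\calR_\calE \subseteq \calR$, and its minimality in $G_\calE$ follows by running the same catalyst-conservation argument backwards: any siphon $\Sigma' \subseteq \Sigma$ of $G_\calE$ lifts to a siphon of $G$, so $\Sigma' = \Sigma$ by the minimality of $\Sigma$ in $G$. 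This is case (ii).

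Finally, if $\Sigma_u = \Sigma_\calE = \varnothing$, then $\Sigma = \Sigma^* \subseteq \calS^*$. For any reaction $R^* \in \calR^*$ with a product in $\Sigma$, the mixed reaction $R \in \calR$ from which $R^*$ arose has some reactant in $\Sigma$ by the siphon property; since $\Sigma$ avoids $\calE$, this reactant is a non-catalyst and hence also a reactant of $R^*$. Minimality in $G^*$ again goes by lifting: any siphon $\Sigma' \subseteq \Sigma$ of $G^*$ extends to a siphon of $G$ because pure catalyst reactions cannot produce elements of $\Sigma' \subseteq \calS^*$ and mixed reactions producing an element of $\Sigma'$ correspond to reactions in $\calR^*$ with the same non-catalyst reactants; hence $\Sigma' = \Sigma$. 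This yields case (i). The only real subtlety is the bookkeeping between the reaction dichotomy supplied by (C1) and the three subsets $\Sigma^*, \Sigma_\calE, \Sigma_u$, and making sure that at each step one invokes the minimality of $\Sigma$ in $G$ at the right moment to collapse $\Sigma$ onto a single block; property (C2) is not used here, as it is needed only for the P-semiflow aspects of Theorem \ref{thm:main2} treated elsewhere.
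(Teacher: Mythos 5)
Your proof is correct and follows essentially the same route as the paper's: the same trichotomy on $\Sigma\cap(\calE\setminus\calS_\calE)$, $\Sigma\cap\calS_\calE$, $\Sigma\cap\calS^*$, with (C1) supplying the dichotomy between catalyst-conserving reactions and reactions of $\calR_\calE$, and minimality of $\Sigma$ in $G$ invoked to collapse onto a single block and to transfer minimality via lifting of siphons. The only difference is that you spell out the verification in the case $\Sigma\subseteq\calS^*$, which the paper dismisses as immediate from the construction of $G^*$.
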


\begin{proof}
	Suppose $\Sigma \cap (\calE \backslash \calS_\calE) \neq \varnothing$. Pick any $E \in (\calE \backslash \calS_\calE)$. Then $E$ appears as a reactant in every reaction in which it also appears as a product. We conclude that $\{E\}$ is a siphon, which must then be minimal. It follows that ({\em iii}\,) holds.
	
	Now suppose $\Sigma \cap (\calE \backslash \calS_\calE) = \varnothing$. We have two possibilities. 
	
	If $\Sigma_\calE := \Sigma \cap \calS_\calE \neq \varnothing$, then it is a siphon of $G$. Indeed, pick any $S' \in \Sigma_\calE$, and let $y \rightarrow y' \in \calR$ be any reaction having $S'$ as one of its products. Since $\Sigma$ is a siphon of $G$, $y \rightarrow y'$ must have one of its reactants $S$ in $\Sigma$. If $y \rightarrow y' \notin \calR_\calE$, then $S'$ is also a reactant in $y \rightarrow y'$, and we may assume without loss of generality that $S = S'$. If  $y \rightarrow y' \in \calR_\calE$, then we have $y = S$ and $S \in \calS_\calE$. In either case, $y \rightarrow y'$ has a reactant $S$ in $\Sigma_\calE$. This shows $\Sigma_\calE$ is a siphon of $G_\calE$. By the minimality assumption, we must have $\Sigma = \Sigma_\calE \subseteq \calS_\calE$. Since every siphon of $G_\calE$ is also a siphon of $G$, we conclude that $\Sigma_\calE$ must be a minimal siphon of $G_\calE$.
	
	If $\Sigma \cap \calE = \varnothing$, then $\Sigma \subseteq \calS^*$. It follows from the construction of $G^*$ that $\Sigma$ is a minimal siphon of $G^*$.
\end{proof}

\comment{%
\subsubsection*{The Siphon/P-Semiflow Property}

\begin{lemma}\label{lemm:main2of2}
	Suppose $G^* = (\calS^*, \calC^*, \calR^*)$ is the reduction of $G = (\calS, \calC, \calR)$ by the removal of a set of catalysts $\calE$. Then $G$ has the siphon/P-semiflow property if, and only if $G^*$ has the siphon/P-semiflow property.
\end{lemma}

\begin{proof}
	($\Rightarrow$) Suppose $G$ has the siphon/P-semiflow property. Let $\Sigma^*$ be a minimal siphon of $G^*$. Note that $\Sigma^*$ is also a minimal siphon of $G$. This is a direct consequence of the construction of $G^*$. Let $\omega \in \Gamma^\perp$ be a P-semiflow of $G$ supported in $\Sigma^*$. By Lemma \ref{lemm:catalystcls}, we may express $\omega$ as
	\[
		\omega = (\omega^*, \omega_\calE, x)
	\]
	for some $\omega^* \in (\Gamma^*)^\perp$, some $\omega_\calE \in \Gamma_\calE^\perp$, and some $x \in \r^{q_u}$. Since $\omega$ is supported in $\Sigma^*$, we must have $\omega_\calE = 0$, $x = 0$, and $\omega^* > 0$. This shows $\Sigma^*$ contains the support of a P-semiflow of $G^*$.
	
	($\Leftarrow$) Suppose $G^*$ has the siphon/P-semiflow property. Let $\Sigma$ be a minimal siphon of $G$. By Lemma \ref{lemm:catalystsiphons_all}, we have three possibilities.
	
	If $\Sigma \subseteq \calS^*$ and is a siphon of $G^*$, then there exists a P-semiflow $\omega^* \in (\Gamma^*)^\perp$ of $G^*$ supported in $\Sigma$. We conclude from Lemma \ref{lemm:catalystcls} that
	\[
		\omega := (\omega^*, 0, 0) \in (\Gamma^*)^\perp \times \Gamma_\calE \times \r^{q_u}
	\]
	is a P-semiflow of $G$ which is supported in $\Sigma$.
	
	If $\Sigma \subseteq \calS_\calE$ and is a siphon of $G_\calE$, then there exists a P-semiflow $\omega_\calE \in \Gamma_\calE$ of $G_\calE$ supported in $\Sigma$ by (C2). We conclude from Lemma \ref{lemm:catalystcls} that
	\[
		\omega := (0, \omega_\calE, 0) \in (\Gamma^*)^\perp \times \Gamma_\calE \times \r^{q_u}
	\]
	is a P-semiflow of $G$ which is supported in $\Sigma$.
	
	If $\Sigma = \{E_i\}$ for some $E_i \in \calE \backslash \calS_\calE$, then it follows from Lemma \ref{lemm:catalystcls} that
	\[
		\omega := (0, 0, {\mathbf e}_i) \in (\Gamma^*)^\perp \times \Gamma_\calE \times \r^{q_u}
	\]
	is a P-semiflow of $G$ which is supported in $\Sigma$.
	
	In either case, $\Sigma$ contains the support of a P-semiflow of $G$. This shows $G$ has the siphon/P-semiflow property.
\end{proof}}

\subsubsection*{Drainable and Self-Replicable Siphons}

\medskip
{\textit{Proof of Theorem  \ref{thm:main2}\ilr{i}.}}
$(\Leftarrow)$ In virtue of (C1), any drainable (respectively, self-replicable) siphon of $G^*$ is also a drainable (respectively, self-replicable) siphon of $G$.

$(\Rightarrow)$ Suppose $\Sigma$ is a drainable or self-replicable siphon of $G$. It is evident from Definition \ref{def:d_or_sr} that any siphon of $G$ contained in $\Sigma$ is also drainable {or self-replicable}. Therefore, we may assume without loss of generality that $\Sigma$ is minimal. By Lemma \ref{lemm:catalystsiphons_all}, either $\Sigma \subseteq \calS^*$, or $\Sigma \subseteq \calS_\calE$, or $\Sigma = \{E\}$ for some $E \in \calE \backslash \calS_\calE$. 

If $\Sigma = \{E\}$ for some $E \in \calE \backslash \calS_\calE$, then the row of $N$ corresponding to $E$ is identically zero, and so the vector of $\rplus^{q + p}$ having its entry corresponding to $E$ equal to $1$ and all other entries equal to zero is a P-semiflow supported in $\Sigma$. In particular, $\Sigma$ is not critical, therefore neither drainable nor self-replicable {by Proposition~\ref{prop:connection}\olr{i}.}

If $\Sigma \subseteq \calS_\calE$, then it follows from (C2) that $\Sigma$ cannot be drainable or self-replicable either.

So, it must be the case that $\Sigma \subseteq \calS^*$. 
{%
Consider a reaction $y\rightarrow y'$ in $\calR^*$ and  the reaction $\widetilde{y}\rightarrow \widetilde{y}'$ in $\calR$ giving rise to it. 
Then the $i$-th coordinate of the vectors $y'-y$ and $\widetilde{y}'-\widetilde{y}$ agree for all $i\in [q]$. Using this observation,}
we conclude that $\Sigma^* := \Sigma \cap \calS^* = \Sigma$ is a drainable or self-replicable siphon of $G^*$.
\mybox

\subsubsection*{Consistency}

{\textit{Proof of Theorem  \ref{thm:main2}\ilr{iii}.}}
	We write the stoichiometric matrix $N$ of $G$ as in the proof of Lemma \ref{lemm:catalystcls}.
	
	First suppose that $G$ is consistent, and let $v \gg 0$ be such that $Nv = 0$. Thus, $N'v' = 0$, where
$v' := (v_1, \ldots, v_{k_1 + \cdots + k_m}) \gg 0$.	Defining $v^* \in \r^m$ by setting
	\[
		v^*_j := v_{k_1 + \cdots + k_{j-1} + 1} + \cdots + v_{k_1 + \cdots + k_{j-1} + k_j}\,,
	\]
	we then get $v^* \gg 0$ and $N^*v^* = 0$, showing that $G^*$ is consistent.
	
	Now suppose $G^*$ is consistent and $G_\calE$ is conservative. Let $v^* \gg 0$ be any vector such that $N^*v^* = 0$. Set
	\[
		v_j' := \frac{1}{k_j}(v_j, \ldots, v_j) \in \r^{k_j}\,,
		\quad
		j = 1, \ldots, m\,,
	\]
	and then set
	\[
		v' := (v_1', \ldots, v_m') \in \r^{k_1 + \cdots + k_m}\,.
	\]
	Then $N'v' = 0$. Since $G_\calE$
	{does not have drainable siphons and is conservative, it follows from Proposition~\ref{prop:persistent2bounded_persistent} that $G_\calE$ is consistent. }Let $v_\calE \gg 0$ be such that $N_\calE v_\calE = 0$. Setting $v := (v', v_\calE)$, we have $v \gg 0$, and $Nv = 0$, proving that $G$ is consistent.
\mybox

\subsection{Uniqueness of The \Primitive\ Reduction}\label{subsec:unique_primitive_reduction}

To prove Theorem \ref{thm:primitive}, we will use induction on the number of species. We start with a few observations and auxiliary results.

In this subsection we will use the following notation. Given a reaction network $G = (\calS, \calC, \calR)$ and a set $\calA \subseteq \calS$ of intermediates or catalysts of $G$, we will denote by $G^*_\calA = (\calS^*_\calA, \calC^*_\calA, \calR^*_\calA)$ the reaction network obtained from $G$ by the removal of $\calA$ (as a set of intermediates or catalysts, whichever happens to be the case). Given another set $\calB \subseteq \calS$ of intermediates (respectively, catalysts) of $G$, note that $\calB \backslash \calA$ is either empty, or else also a set of intermediates (respectively, catalysts) of $G^*_\calA$. We then denote by $G^*_{\calA\calB} = (\calS^*_{\calA\calB}, \calC^*_{\calA\calB}, \calR^*_{\calA\calB})$ the reaction network obtained from $G^*_\calA$ by the removal of $\calB \backslash \calA$.

\begin{lemma}\label{lemm:primitive1and2of3}
	Given a reaction network $G = (\calS, \calC, \calR)$, suppose $\calA, \calB \subseteq \calS$ are two sets of intermediates or two sets of catalysts of $G$. Let $\calD := \calA \cup \calB$. Then $G^*_{\calD} = G^*_{\calA\calB} = G^*_{\calB\calA}$.
\end{lemma}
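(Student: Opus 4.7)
The plan is to treat the intermediates and catalysts cases separately, using a common two-step strategy. First, verify that $\calD := \calA \cup \calB$ is itself a set of intermediates (respectively, catalysts) of $G$, and similarly that $\calB \setminus \calA$ is one of $G^*_\calA$. Second, establish the equality $G^*_\calD = G^*_{\calA\calB}$ either by invoking Lemma \ref{lemm:intermediateinduction} (for intermediates) or by direct comparison of the projection constructions (for catalysts). The equality $G^*_\calD = G^*_{\calB\calA}$ then follows by the symmetric argument.

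For intermediates, (I1) for $\calD$ follows from a case analysis on any complex $y \in \calC$: if some species of $y$ lies in $\calA$ (respectively, $\calB$), then by (I1) for $\calA$ (respectively, $\calB$), $y$ must be a single intermediate in $\calA$ (respectively, $\calB$); otherwise $y$ is supported outside $\calD$. For (I2), given $Y \in \calD$, I apply (I2) for the set containing $Y$ to obtain an incoming reaction path from some $y_0 \notin \calA$; if $y_0 \in \calB \setminus \calA$, I extend backwards via (I2) for $\calB$, iterating as needed. Finiteness of $\calC$ together with cycle-shortcutting forces the iteration to terminate at a starting complex outside $\calD$. The outgoing path is constructed symmetrically. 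Enumerating $\calD = \{Y_1, \ldots, Y_q\}$ so that $\calA = \{Y_1, \ldots, Y_p\}$ enumerates $\calA$ first and $\{Y_{p+1}, \ldots, Y_q\}$ enumerates $\calB \setminus \calA$, Lemma \ref{lemm:intermediateinduction} implies that the iterative one-at-a-time removal in this order gives $G^*_\calD$; by construction, the first $p$ removals produce $G^*_\calA$, so the entire iteration realizes $G^*_{\calA\calB}$. Hence $G^*_\calD = G^*_{\calA\calB}$.

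For catalysts, (C1) for $\calD$ follows by combining (C1) for $\calA$ and $\calB$: any reaction with simultaneously changing coefficients on some $\calA \setminus \calB$ species and some $\calB \setminus \calA$ species is ruled out (neither disjunct of (C1) for $\calA$ is compatible with the presence of changing $\calB \setminus \calA$ species), so every reaction of $G$ either conserves all $\calD$-species or lies entirely within $\calD$, yielding the crucial decomposition $\calR_\calD = \calR_\calA \cup \calR_\calB$. For (C2), I argue via the drainable/self-replicable siphon reformulation (Corollary \ref{cor:equivcrit}): a minimal such siphon of $G_\calD$ drains through reactions in $\calR_\calA \cup \calR_\calB$, and since species in $\calA \setminus \calB$ (respectively, $\calB \setminus \calA$) are only affected by $\calR_\calA$-reactions (respectively, $\calR_\calB$-reactions), the drainage restricts to a drainable siphon of $G_\calA$ or $G_\calB$, contradicting (C2) for the corresponding set. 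Once $\calD$ is verified as a set of catalysts (and $\calB \setminus \calA$ one of $G^*_\calA$), the equality $G^*_\calD = G^*_{\calA\calB}$ follows by direct inspection: both reductions amount to projecting out all $\calD$-species from every reaction, the sequential projection coinciding with the single-step one.

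The main obstacle is the verification of (C2) for $\calD$ in the catalysts case. The obvious attempt to build a P-semiflow of $G_\calD$ by summing P-semiflows of $G_\calA$ and $G_\calB$ (extended by zero) fails whenever they disagree on the overlap $\calS_\calA \cap \calS_\calB$, and the reformulation via drainable and self-replicable siphons requires careful handling of the case where the minimal siphon meets $\calS_\calA \cap \calS_\calB$; this is precisely where the separation of species types between $\calR_\calA$ and $\calR_\calB$, guaranteed by the decomposition $\calR_\calD = \calR_\calA \cup \calR_\calB$, has to be exploited to extract the contradiction.
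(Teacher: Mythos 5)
Your argument for the equalities themselves follows essentially the same route as the paper: for intermediates you reduce to Lemma \ref{lemm:intermediateinduction} by removing the elements of $\calD$ one at a time in an order that exhausts $\calA$ first (respectively $\calB$ first), and for catalysts you observe that both two-step projections retain exactly the reactions carrying a nonzero coefficient on some species outside $\calD$ and delete exactly the $\calD$-coordinates, which is the one-step construction of $\calR^*_\calD$. What you add, and the paper does not prove, is the well-definedness of the objects involved: that $\calD$ is itself a set of intermediates (respectively catalysts) of $G$ and that $\calB \backslash \calA$ is one of $G^*_\calA$; the paper merely asserts the latter in the paragraph introducing the notation $G^*_{\calA\calB}$. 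On the catalyst side your verifications are essentially sound: (C1) for $\calD$ follows from the case analysis you describe, and the decomposition $\calR_\calD = \calR_\calA \cup \calR_\calB$ together with the fact that every reaction of $\calR_\calB \backslash \calR_\calA$ conserves each species of $\calA$ does let one restrict a draining (or replicating) sequence for a siphon of $G_\calD$ meeting $\calS_\calA$ to a sequence of $\calR_\calA$-reactions, as you indicate.

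The gap is in your verification of (I2) for $\calD$ in the intermediates case. The backward (and forward) extension you describe need not terminate at a complex outside $\calD$, and ``cycle-shortcutting'' cannot rescue it, because there may be no reaction path at all from a given element of $\calD$ to a complex of $\calC \backslash \calD$. Concretely, let $G$ have reactions $S \rightarrow A$, $A \rightarrow B$, $B \rightarrow A$, and take $\calA = \{A\}$, $\calB = \{B\}$. Each singleton satisfies (I1)--(I2) ($A$ has the entering reaction $S \rightarrow A$ and the exiting reaction $A \rightarrow B$; $B$ has $A \rightarrow B$ and $B \rightarrow A$), but $\calD = \{A, B\}$ violates (I2): the only complex outside $\calD$ is $S$, and no reaction points into $S$, so no directed path leaves $\{A,B\}$. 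In the same example $\calB \backslash \calA = \{B\}$ fails (I2) in $G^*_\calA = \{S \rightarrow B\}$, so $G^*_{\calA\calB}$ is not defined either. To be fair, this defect is inherited from the paper rather than introduced by you: the paper's proof silently applies Lemma \ref{lemm:intermediateinduction} to $\calD$, which presupposes that $\calD$ is a set of intermediates, and the surrounding text asserts without proof that $\calB \backslash \calA$ is one of $G^*_\calA$. But your write-up claims to close precisely this hole, and the termination argument offered does not do so; the statement has to be read as conditional on all three reductions being defined, or (I2) for $\calD$ must be added as a hypothesis.
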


\begin{proof}
	If $\calA$ and $\calB$ are both sets of intermediates, then the result follows from Lemma \ref{lemm:intermediateinduction}. Removing first the intermediates in $\calA$ one at a time, then removing the intermediates in $\calB \backslash \calA$ yields $G^*_{\calA\calB}$. The analogue procedure starting with the intermediates in $\calB$ yields $G^*_{\calB\calA}$. One then concludes by the same lemma that $G^*_{\calA\calB} = G^*_{\calB\calA} = G^*_\calD$.
	
	Now suppose $\calA$ and $\calB$ are both sets of catalysts. Then both $\calR^*_{\calA\calB}$ and $\calR^*_{\calB\calA}$ consist of the reactions
		\[
			\sum_{i\colon S_i \notin \calD} \alpha_iS_i \longrightarrow \sum_{i\colon S_i \notin \calD} \alpha'_iS_i
		\]
		such that
		\[
			\sum_{i = 1}^n \alpha_iS_i \longrightarrow \sum_{i = 1}^n \alpha'_iS_i
		\]
		belongs to $\calR$, and $\alpha_{i_0} > 0$ or $\alpha'_{i_0} > 0$ for some $i_0 \in [n]$ such that $S_{i_0} \notin \calD$. This shows $\calR^*_{\calA\calB} = \calR^*_{\calB\calA} = \calR^*_\calD$, establishing the result.
\end{proof}

Finally, the removal of a set of catalysts also commutes with the removal of a set of intermediates, in the following sense.

\begin{lemma}\label{lemm:primitive3of3}
	Let $G = (\calS, \calC, \calR)$ be a reaction network, $\calY \subseteq \calS$ be a set of intermediates, and $\calE \subseteq \calS$ be a set of catalysts. Then $G^*_{\calY\calE} = G^*_{\calE\calY}$.
\end{lemma}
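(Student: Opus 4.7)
The plan is to describe $G^*_{\calY\calE}$ and $G^*_{\calE\calY}$ explicitly and match them reaction by reaction. Preliminary to this, I need to check that both orders of reduction are well-defined: $\calE$ (restricted to surviving species) should satisfy (C1) and (C2) in $G^*_\calY$, and $\calY$ should satisfy (I1) and (I2) in $G^*_\calE$. These verifications are largely mechanical, but (C2) deserves scrutiny: I must argue that the subnetwork of $G^*_\calY$ implied by $\calE$ agrees with $G_\calE$ itself, so that the hypothesis on $G_\calE$ transfers to the reduced network.

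The central structural observation, which drives everything, is this: if $Y \in \calY \backslash \calE$ and $y \rightarrow Y \in \calR$ with $y \in \calC \backslash \calY$, then $y$ has no catalyst part. Indeed, by (I1) the complex $Y$ consists of a single intermediate species, so its catalyst part is zero and its non-catalyst part is $Y$ itself; (C1) applied to $y \rightarrow Y$ then forces the catalyst part of $y$ to be zero. A symmetric statement holds for $Y \rightarrow y$. Consequently, in any reaction path $y \rightarrow Y^{(1)} \rightarrow \cdots \rightarrow Y^{(\ell)} \rightarrow y'$ in $G$ with interior complexes in $\calY$ and $y, y' \in \calC \backslash \calY$, both endpoints $y$ and $y'$ are free of catalysts and hence unaffected by the projection that removes catalysts.

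Given this, both reductions admit the same explicit description. For $G^*_{\calY\calE}$: the compressed reactions in $\calR^*_Y$, having catalyst-free endpoints, survive the second step unchanged, while the direct reactions in $\calR^*_c$ (reactions of $\calR$ not involving any intermediates) undergo the usual catalyst projection. For $G^*_{\calE\calY}$: after projecting catalysts out of $\calR$, the images of intermediate complexes are unchanged, so each reaction path through intermediates in $G$ persists as a reaction path in $G^*_\calE$ with its endpoints unaltered; compressing these paths produces exactly the reactions in $\calR^*_Y$, while the reactions not involving intermediates are just the catalyst projections of $\calR^*_c$. The two orderings thus yield the same reaction set, hence the same network.

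The main obstacle is taking care of the edge case $\calY \cap \calE \neq \varnothing$: any species $S$ in both sets can appear in reactions only in highly constrained ways (by (I1), $S$ appears alone as a complex, and by (C1), the complexes on the other side of any reaction involving $S$ must be entirely catalysts). In either reduction order, such an $S$ is eliminated along with the reactions it participates in, and one checks that both orderings eliminate the same species and reactions, so the conclusion still holds.
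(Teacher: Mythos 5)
Your proposal is correct and follows essentially the same route as the paper's proof: both describe $\calR^*_{\calY\calE}$ and $\calR^*_{\calE\calY}$ explicitly and match them reaction by reaction, the key point being that reactions through intermediates and reactions fixing catalysts cannot interfere (the paper states this as $\calR(\calY)\cap\calR(\calE)=\varnothing$, while you give the underlying (C1)-based reason, namely that the endpoints of an intermediate path carry no catalyst part). Your explicit attention to well-definedness of both orders and to the degenerate overlap $\calY\cap\calE\neq\varnothing$ goes slightly beyond what the paper writes down, but the argument is the same.
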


\begin{proof}
	Let $\calR(\calY)$ be the subset of reactions $c \rightarrow c' \in \calR$ having some intermediate in $\calY$ as a reactant or product. It follows directly from property (I2) of intermediates that $\calR(\calY)$ is the subset of reactions $c \rightarrow c' \in \calR$ which appear in some reaction path
	\[
		y \longrightarrow Y^{(1)} \longrightarrow \cdots \longrightarrow Y^{(k)} \longrightarrow y'
	\]
	such that $y, y' \in \calC \backslash \calY$ and $Y^{(1)}, \ldots, Y^{(k)} \in \calY$. Let $\calR(\calE)$ be the subset of reactions $c \rightarrow c' \in \calR$ having some catalyst in $\calE$ as both reactant and product. Observe that $\calR(\calY) \cap \calR(\calE) = \varnothing$. Thus, both $\calR^*_{\calY\calE}$ and $\calR^*_{\calE\calY}$ consist of the set of reactions $y \rightarrow y'$ such that $y \rightarrow y' \in \calR \backslash (\calR(\calY) \cup \calR(\calE) \cup \calR_\calE)$, or
	\[
		y \longrightarrow Y^{(1)}, Y^{(1)} \longrightarrow Y^{(2)}, \ldots, Y^{(k-1)} \longrightarrow Y^{(k)}, Y^{(k)} \longrightarrow y' \in \calR(\calY)
	\]
	for some $y, y' \in \calC \backslash \calY$ and $Y^{(1)}, \ldots, Y^{(k)} \in \calY$, or
	\[
		y \longrightarrow y' = \sum_{i\colon S_i \notin \calE} \alpha_iS_i \longrightarrow \sum_{i\colon S_i \notin \calE} \alpha'_iS_i
	\]
	for some
	\[
			\sum_{i = 1}^n \alpha_iS_i \longrightarrow \sum_{i = 1}^n \alpha'_iS_i
	\]
	belonging to $\calR(\calE)$.
\end{proof}

\noindent {\em {Proof of Theorem \ref{thm:primitive}}.}
	We use induction on the number of species. A reaction network with zero species (the empty network) is already \primitive, so, in this case, the result holds vacuously.
	
	Now suppose the result holds for reaction networks with up to $n \geqslant 0$ species, and let $G = (\calS, \calC, \calR)$ be a reaction network with $|\calS| = n + 1$ species. If $G$ is already \primitive, then it is automatically its unique \primitive\ reduction, in which case we have nothing left to prove. So, we may assume $G$ is not \primitive. 
	
	Let $\calA, \calB \subseteq \calS$ be sets of intermediates or catalysts of $G$ such that $\calA \neq \calB$. By the induction hypothesis, $G^*_\calA$ and $G^*_\calB$ have unique \primitive\ reductions, respectively, $G^{**}_\calA$ and $G^{**}_\calB$. We want to show that $G^{**}_\calA = G^{**}_\calB$.
	
	Let $G^{**}_{\calA\calB}$ (respectively, $G^{**}_{\calB\calA}$) be the \primitive\ reduction of $G^*_{\calA\calB}$ (respectively, $G^*_{\calB\calA}$). Note that $G^{**}_{\calA\calB} = G^{**}_\calA$ and $G^{**}_{\calB\calA} = G^{**}_\calB$. By Lemmas \ref{lemm:primitive1and2of3} and \ref{lemm:primitive3of3}, $G^*_{\calA\calB} = G^*_{\calB\calA}$, and hence $G^{**}_\calA = G^{**}_\calB$.
\hfill\mybox

\appendix
\section{Technical Results}

\subsection{Proof of Proposition~\ref{prop:persistent2bounded_persistent}}\label{app:prop2}

 {\it {Proposition~\ref{prop:persistent2bounded_persistent}\ilr{i}.}} We prove that if $G$ is persistent, then it is bounded-per\-sist\-ent.  Take any $s_0 \gg 0$. If $\omega(s_0) = \varnothing$, then we have nothing to prove. So, suppose $\omega(s_0) \neq \varnothing$. Choose any $s \in \omega(s_0)$, and a sequence $(t_k)_{k \in \n}$ going to infinity in $\rplus$ such that
		\[
			\lim_{k \to \infty} \sigma(t_k, s_0) = s\,.
		\]
		Then
		\[
			s_i 
			= \displaystyle \liminf_{k \to \infty} \sigma_i(t_k, s_0)
			\geqslant \displaystyle \liminf_{t \to \infty} \sigma_i(t, s_0)
			> 0\,, 
				\quad \forall i \in [n]\,.
		\]
		In particular, $s \notin \partial\rplus^n$. Thus, $\omega(s_0) \cap \partial\rplus^n = \varnothing$.
\mybox

\medskip
\noindent
 {\it {Proposition~\ref{prop:persistent2bounded_persistent}\ilr{ii}.}} 
The converse of Proposition~\ref{prop:persistent2bounded_persistent}\olr{i} is not true. However, (\ref{eq:persistent}) holds for bounded trajectories of bounded-persistent networks---hence the terminology.

Since each stoichiometric compatibility class of a conservative network is compact \cite[Appendix 1]{horn--jackson-1972}, every solution of (\ref{eq:crntode}) is bounded. The proof of Proposition~\ref{prop:persistent2bounded_persistent}\olr{ii} then follows 
from the next lemma.

\begin{lemma}\label{lemm:bounded_persistent2persistent}
	Suppose a solution $\sigma(\cdot, s_0)\colon \rplus \rightarrow \rplus^n$ of a bounded-per\-sist\-ent reaction network is bounded. Then
	\begin{equation}\label{eq:persistent2}
		\liminf_{t \to \infty} \sigma_i(t, s_0) > 0\,,
		\quad
		\forall i \in [n]\,.
	\end{equation}
\end{lemma}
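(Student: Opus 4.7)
The plan is to prove Lemma \ref{lemm:bounded_persistent2persistent} by contradiction, by constructing a point in $\omega(s_0)$ that lies on the boundary $\partial\rplus^n$, contradicting bounded-persistence. The hypothesis that $s_0 \gg 0$ (implicit from the definition of bounded-persistence) will be used implicitly.

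First, suppose for contradiction that \eqref{eq:persistent2} fails, so that there exists some coordinate $i \in [n]$ with
\[
\liminf_{t \to \infty} \sigma_i(t, s_0) = 0.
\]
By definition of the $\liminf$, I can pick a sequence $(t_k)_{k \in \n} \subseteq \rplus$ tending to infinity such that $\sigma_i(t_k, s_0) \to 0$ as $k \to \infty$.

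Next, I exploit boundedness. The hypothesis that the solution $\sigma(\cdot, s_0)$ is bounded ensures that $\{\sigma(t_k, s_0)\}_{k \in \n}$ is a bounded subset of $\rplus^n$. By the Bolzano--Weierstrass theorem, I can extract a subsequence $(t_{k_\ell})_{\ell \in \n}$ such that $\sigma(t_{k_\ell}, s_0) \to s^* \in \r^n$ as $\ell \to \infty$. Because $\rplus^n$ is closed (and forward invariance ensures each $\sigma(t_{k_\ell}, s_0) \in \rplus^n$), we have $s^* \in \rplus^n$. Moreover, since $(t_{k_\ell})$ is a subsequence of $(t_k)$, we still have $t_{k_\ell} \to \infty$, so $s^* \in \omega(s_0)$ by the characterization of the $\omega$-limit set given just before Definition \ref{def:boundedpersistence}.

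Finally, I derive the contradiction. Since $\sigma_i(t_{k_\ell}, s_0) \to 0$ (it is a subsequence of a sequence with limit $0$), we must have $s^*_i = 0$, so $s^* \in \partial\rplus^n$. Therefore $s^* \in \omega(s_0) \cap \partial\rplus^n$, contradicting the bounded-persistence hypothesis applied to the initial state $s_0 \gg 0$. I expect no real obstacles here; the only delicate point is ensuring we may cite forward invariance of $\rplus^n$ (already noted after hypotheses (r1)--(r2)) so that the limit $s^*$ of the convergent subsequence is itself nonnegative and hence a legitimate element of $\omega(s_0) \subseteq \rplus^n$. \mybox
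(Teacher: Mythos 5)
Your proof is correct and follows essentially the same route as the paper's: assume the $\liminf$ vanishes for some coordinate, use boundedness to extract a convergent subsequence whose limit lies in $\omega(s_0)\cap\partial\rplus^n$, and contradict bounded-persistence. Your explicit remarks on forward invariance of $\rplus^n$ and the implicit assumption $s_0 \gg 0$ are minor clarifications the paper leaves tacit, but the argument is the same.
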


\begin{proof}
	Suppose on contrary that
		\[
			\liminf_{t \to \infty} \sigma_{i_0}(t, s_0) = 0
		\]
		for some $i_0 \in [n]$. Then
		\[
			\lim_{k \to \infty} \sigma_{i_0}(t_k, s_0) = 0
		\]
		along some sequence $(t_k)_{k \in \n}$ going to infinity in $\rplus$. In virtue of boundedness, by passing into a subsequence, if necessary, we may assume without loss of generality that $(\sigma(t_k, s_0))_{k \in \n}$ converges, say,
		\[
			\lim_{k \to \infty} \sigma(t_k, s_0) = s_\infty\,.
		\]
		We have $s_\infty \in \omega(s_0)$ by definition. But since the $i_0^{th}$ coordinate of $s_\infty$ is zero, we conclude that $s_\infty \in \partial\rplus^n$ also. This contradicts the bounded-persistence hypothesis that $\omega(s_0) \cap \partial\rplus^n = \varnothing$. Thus, (\ref{eq:persistent2}) must hold.
\end{proof}

 \medskip
 \noindent
 {\it {Proposition~\ref{prop:persistent2bounded_persistent}\ilr{iii}.}} 
 See \cite[Theorem 1]{angeli--deleenheer--sontag-2007c}.
\mybox

\medskip
\noindent
 {\it {Proposition~\ref{prop:persistent2bounded_persistent}\ilr{iv}.} }
The same argument as in \cite[Theorem 6.2]{deshpande--gopalkrishnan-2014} works under our weaker assumptions on the reaction rates.
	\mybox

\medskip
\noindent
 {\it {Proposition~\ref{prop:persistent2bounded_persistent}\ilr{v}.} }
We define the {\em zero coordinate set} of a point $s \in \rplus^n$, with respect to some given reaction network $G$, as the set
	\[
		Z(s) := \{S_i \in \calS\, | \ s_i = 0\} = \calS \backslash \supp s\,.
	\]
Thus, a point $s \in \rplus^n$ is a boundary steady state if, and only if $Z(s) \neq \varnothing$.

Let $s_0$ be a boundary steady state of $G$. 
	{By Lemma \ref{lemm:bss1} below and our hypothesis, the zero coordinate set $Z(s_0)$ of $s_0$ is a noncritical siphon.
	It follows by the equivalence between items 1.\ and 3.\ in \cite[Theorem 3.7]{deshpande--gopalkrishnan-2014} that $(s_0 + S) \cap \rplus^n\neq \varnothing$.}

The next lemma was proved in \cite{shiu--sturmfels-2010} for mass-action kinetics. The same argument holds under (r2), and we provide the details for the sake of completeness.
 
\begin{lemma}\label{lemm:bss1}
	Let $G$ be a reaction network. If $s_0$ is a boundary steady state, then $Z(s_0)$ is a siphon.
\end{lemma}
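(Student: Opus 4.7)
The plan is to pick any species $S_{i_0} \in Z(s_0)$ and show that every reaction having $S_{i_0}$ as a product must have some reactant in $Z(s_0)$; since $S_{i_0}$ was arbitrary in $Z(s_0)$, this shows $Z(s_0)$ is a siphon. The workhorse is hypothesis (r2), which translates ``concentration zero on some reactant'' into ``rate equals zero'' (and vice versa).

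Concretely, I would start from the steady state equation $Nr(s_0) = 0$ and read off the $i_0$-th component:
\[
0 = \sum_{j=1}^{m} (\alpha'_{i_0 j} - \alpha_{i_0 j}) \, r_j(s_0).
\]
For every reaction $R_j$ with $\alpha_{i_0 j} > 0$ (so $S_{i_0}$ is one of its reactants), hypothesis (r2) gives $r_j(s_0) = 0$ because $(s_0)_{i_0} = 0$. Consequently those terms drop out, and the equation reduces to
\[
0 = \sum_{j\,:\, \alpha_{i_0 j} = 0} \alpha'_{i_0 j} \, r_j(s_0).
\]
By (r1) every summand is nonnegative, so each must vanish individually: for every $j$ with $\alpha_{i_0 j} = 0$ and $\alpha'_{i_0 j} > 0$ we obtain $r_j(s_0) = 0$.

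Now take any reaction $R_j$ with $S_{i_0}$ as a product, i.e.\ $\alpha'_{i_0 j} > 0$. If $\alpha_{i_0 j} > 0$ as well, then $S_{i_0}$ is itself a reactant of $R_j$ lying in $Z(s_0)$ and we are done. Otherwise $\alpha_{i_0 j} = 0$, and the previous paragraph gives $r_j(s_0) = 0$; applying (r2) in the other direction, there must exist some $i \in Q_j$ with $(s_0)_i = 0$, that is, a reactant of $R_j$ lying in $Z(s_0)$. In either case $R_j$ has a reactant in $Z(s_0)$, which is what we needed.

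There is no real obstacle here beyond keeping the bookkeeping of reactants versus products straight; the only substantive point is the double use of (r2) in both directions, which is exactly the biconditional stated there. The assumption $s_0 \in \partial \rplus^n$ only enters to guarantee that $Z(s_0)$ is nonempty, so that the object we are producing qualifies as a siphon under Definition~\ref{def:cnrt-siphons}.
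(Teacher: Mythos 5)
Your proposal is correct and follows essentially the same argument as the paper: isolate the $i_0$-th component of the steady state equation, use (r2) to kill the terms where $S_{i_0}$ is a reactant, use nonnegativity of the rates to force each remaining term to vanish, and then apply (r2) in the reverse direction to produce a reactant in $Z(s_0)$. The only cosmetic difference is that the paper restricts attention from the outset to the index set $\calJ_{i}$ of reactions having $S_i$ as a product but not a reactant, whereas you handle the ``product and reactant'' case separately at the end; the content is identical.
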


\begin{proof}
	Pick any $S_{i} \in Z(s_0)$.
Consider the set $\calJ_{i}$ of indices $j \in [m]$ such that $R_j$ is a reaction having $S_{i}$ as one of its products, but not one of its reactants; that is,
	\begin{equation*}
		\calJ_{i} := \{j \in [m]\,|\ \alpha'_{ij} > 0 \ \text{and} \ \alpha_{ij} = 0\}\,.
	\end{equation*}
	If  $\calJ_i \neq \varnothing$, we need to show that  $Z(s_0)$ contains some species in the reactant of each $R_j$ such that   $j\in \calJ_{i}$.
	Since $s_0$ is a steady state, we have
	\begin{equation}\label{eq:ss}
		 \sum_{j = 1}^m (\alpha'_{ij} - \alpha_{ij}) r_j(s_0) = 0\,.
	\end{equation}	
	For each $j \notin \calJ_{i}$,  we either have $\alpha_{ij} > 0$ (in which case $r_j(s_0) = 0$ by (r2) since $(s_0)_i=0$) or
	$\alpha'_{ij} = \alpha_{ij}=0$.
	Hence the sum in (\ref{eq:ss}) can be simplified as
	\[
		\sum_{j \in \calJ_{i}} \alpha'_{ij} r_j(s_0) = 0\,.
	\]
	Since $\alpha'_{ij} > 0$ for every $j \in \calJ_i$ by construction, we conclude that
		$r_j(s_0) = 0$ for all $j \in \calJ_{i}$.
	It then follows from (r2) that $\alpha_{i(j)j} > 0$ for some $i(j) \in [n]$ such that $S_{i(j)} \in Z(s_0)$, that is, one of the reactants of $R_j$ belongs to $Z(s_0)$ for each $j \in \calJ_{i}$. This completes the proof that $Z(s_0)$ is a siphon.	
\end{proof}

\subsection{Drainable and Self-Replicable Siphons}\label{subsec:equivalence}

The next result shows that the concepts of drainable and self-replicable {sets} in Definition \ref{def:d_or_sr} are, respectively, equivalent to the concepts of drainable and self-replicable {sets} in \cite[Definition 3.1]{deshpande--gopalkrishnan-2014} {(called here {\em DG-drainable} and {\em DG-self-replicable})}.

Given a reaction network $G = (\calS, \calC, \calR)$, we define a {\em $G$-reaction pathway} to be any sequence $y(0), y(1), \ldots, y(k) \in \rplus^n$ such that
\begin{align}
		y(0) &= y_1 + w_1\,,  \nonumber \\
		y(j)  & = y_j' + w_j = y_{j+1} + w_{j+1}\,,
		\quad
		j = 1, \ldots, k-1\,, \label{eq:DG}
	\\
		y(k) &= y_k' + w_k\,, \nonumber 
\end{align}
	for some $y_1, y_1', w_1, \ldots, y_k, y_k', w_k \in \rplus^n$ such that $y_1 \rightarrow y_1, \ldots, y_k \rightarrow y_k' \in \calR$. 
{Note that
\begin{equation}\label{eq:Gpathway}
			y(k) - y(0) 
			=   \sum_{j=1}^k \big( y(j) - y(j-1) \big) =   \sum_{j=1}^k (y_j' + w_j - y_j - w_j) 
			=   \sum_{j=1}^k (y_j' - y_j)\,.
\end{equation}}

	A nonempty subset $\Sigma \subseteq \calS$ is said to be {\em DG-drainable} (respectively, {\em DG-self-replicable}) if there exists a $G$-reaction pathway $y(0), y(1), \ldots, y(k)$ such that $\big(y(k) - y(0) \big)_i < 0$ (respectively, $\big(y(k) - y(0) \big)_i > 0$), for every $i \in [n]$ such that $S_i \in \Sigma$.

\begin{proposition}\label{prop:understood}
	Let $G = (\calS, \calC, \calR)$ be a reaction network. A {subset} of $\calS$ is drainable (respectively, self-replicable) if, and only if it is DG-drainable (respectively, DG-self-replicable).
\end{proposition}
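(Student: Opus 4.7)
The plan is to prove the two directions by using equation \eqref{eq:Gpathway}, which essentially tells us that the net displacement $y(k) - y(0)$ of a $G$-reaction pathway is exactly the telescoped sum $\sum_{j=1}^k (y_j' - y_j)$ of the stoichiometric changes of the reactions used to build the pathway. Thus the two notions should be equivalent, and the only nontrivial part is to produce nonnegative ``buffers'' $w_j$ in one direction.

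For the easy direction (DG-drainable $\Rightarrow$ drainable, and similarly for self-replicable), I would simply take the reactions $y_1 \to y_1', \ldots, y_k \to y_k'$ underlying the given $G$-reaction pathway. Equation \eqref{eq:Gpathway} immediately gives $\sum_{j=1}^k (y_j' - y_j) = y(k) - y(0)$, whose sign on the coordinates indexed by $\Sigma$ is, by hypothesis, negative (respectively positive) on $\Sigma$. Hence $\Sigma$ is drainable (respectively self-replicable) in the sense of Definition \ref{def:d_or_sr}.

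For the converse, suppose $\Sigma$ is drainable, witnessed by reactions $y_1 \to y_1', \ldots, y_k \to y_k' \in \calR$. I would construct a $G$-reaction pathway out of the same reactions, in the same order, by choosing buffers $w_1, \ldots, w_k \in \rplus^n$ large enough that the gluing conditions in \eqref{eq:DG} can be met in the nonnegative orthant. Concretely, pick an auxiliary vector $M \in \rplus^n$ to be fixed later, set $w_1 := M$, and recursively define $w_{j+1} := y_j' + w_j - y_{j+1}$ for $j = 1, \ldots, k-1$. Unrolling the recursion gives
\[
w_j = M + \sum_{i=1}^{j-1} y_i' - \sum_{i=2}^{j} y_i, \qquad j = 1, \ldots, k,
\]
so by choosing each coordinate of $M$ larger than $\max_{j} \bigl(\sum_{i=2}^{j} y_i - \sum_{i=1}^{j-1} y_i'\bigr)_+$, every $w_j$ lies in $\rplus^n$. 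Setting $y(0) := y_1 + w_1$, $y(j) := y_j' + w_j$ for $j \in [k]$ then yields a valid $G$-reaction pathway, and a direct telescoping calculation (or \eqref{eq:Gpathway}) gives
\[
y(k) - y(0) = y_k' + w_k - y_1 - M = \sum_{j=1}^{k} (y_j' - y_j),
\]
which is strictly negative on the coordinates indexed by $\Sigma$ by assumption. Hence $\Sigma$ is DG-drainable. The self-replicable case is identical with the reverse inequality throughout.

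The main ``obstacle'' is merely the bookkeeping needed to verify that the $w_j$ can be chosen simultaneously nonnegative; but since $M$ only needs to dominate finitely many fixed expressions coordinatewise, this is routine. No step is substantively hard once one recognizes that the buffer $M$ decouples the nonnegativity requirement from the sign of the net displacement.
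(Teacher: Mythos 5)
Your proposal is correct and follows essentially the same route as the paper: both directions rest on the telescoping identity \eqref{eq:Gpathway}, and the converse is handled by running the same reactions in order with buffers defined recursively by $w_{j+1} := y_j' + w_j - y_{j+1}$. The only (cosmetic) difference is that the paper fixes the explicit initial buffer $w_1 := y_2 + \cdots + y_k$, which automatically makes every $w_j = \sum_{i=1}^{j-1} y_i' + \sum_{i=j+1}^{k} y_i \geqslant 0$, whereas you take a generic sufficiently large $M$; both choices work.
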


\begin{proof}
{	$(\Leftarrow)$ 
	Follows from \eqref{eq:Gpathway} and Definition \ref{def:d_or_sr}.
	
	$(\Rightarrow)$ Let $y_1 \rightarrow y_1', \ldots, y_k \rightarrow y_k' \in \calR$ be any sequence of reactions. 
	Define, iteratively,
$$w_1 := y_2+\dots+y_k, \quad\text{and}\quad w_{j+1} := y_j' + w_j - y_{j+1}, \quad j=1,\dots,k-1.$$
		By construction, $\omega_j\in  \rplus^n$ for all $j=1,\dots,k$, and $y_{j+1}+w_{j+1} = y'_j + w_j$ for all $j=1,\dots,k-1$. We can construct a G-reaction pathway $y(0), y(1), \ldots, y(k) \in \rplus^n$ from $y_1, y_1', w_1, \ldots, y_k, y_k', w_k$ using \eqref{eq:DG}. The implication now follows again from \eqref{eq:Gpathway} and Definition \ref{def:d_or_sr}.}
\end{proof}

\subsubsection*{Acknowledgements}
	Elisenda Feliu, Michael Marcondes de Freitas and Carsten Wiuf acknowledge funding from the Danish Research Council of Independent Research. We would also like to thank the reviewers for their helpful comments and suggestions, which have greatly improved this work.

\end{document}